\theoremstyle{plain}
\newcommand{\E}{\mathbb E}
\newcommand{\R}{\mathbb R}
\newcommand{\C}{\mathcal C}
\newcommand{\LL}{\mathcal L}
\newcommand{\cU}{\mathcal U}
\newcommand{\cA}{\mathcal A}
\newcommand{\cJ}{\mathcal J}
\newcommand{\cI}{\mathcal I}
\newcommand{\cF}{\mathcal F}
\newcommand{\cT}{\mathcal T}
\newcommand{\ep}{\epsilon}
\def\P{{\mathbb P}}
\def\PP{ { \widetilde{\mathbb{P}} } }
\def\EE{ { \widetilde{\mathbb{E}} } }
\def\JJ{ { \widehat{\mathcal{J}} } }
\def\WO{ W^0  }
\def\WI{ W^1  }
\def\W{ \widetilde{W}  }
\def\cS{{\mathcal S}}
\newtheorem{theorem}{Theorem}[section]
\newtheorem{lemma}[theorem]{Lemma}
\newtheorem{corollary}[theorem]{Corollary}
\newtheorem{proposition}[theorem]{Proposition}
\newtheorem{definition}[theorem]{Definition}
\newtheorem{assumption}[theorem]{Assumption}
\newtheorem{remark}[theorem]{Remark}
\theoremstyle{definition}
\title{Dynkin games with incomplete and asymmetric information}
\author[Tiziano De Angelis, Erik Ekstr\"om and Kristoffer Glover]{Tiziano De Angelis, Erik Ekstr\"om and Kristoffer Glover}
\subjclass[2000]{Primary 91G10; Secondary 60G40}
\keywords{Dynkin games; asymmetric information; randomised strategies; Nash equilibria; singular control; quasi-variational inequalities.}
\address{T.~De Angelis: School of Mathematics, University of Leeds, LS2 9JT Leeds, United Kingdom.}
\address{E.~Ekstr\"om: Department of Mathematics, Uppsala University, Box 480, 75106 Uppsala, Sweden.}
\address{K.~Glover: University of Technology Sydney, P.O. Box 123, Broadway, NSW 2007, Australia.}
\date{\today}
\thanks{{\em Acknowledgments}: T.~De Angelis was supported by the EPSRC grant EP/R021201/1 and  E.~Ekstr\"om
by the Knut and Alice Wallenberg Foundation.
Parts of this work were carried out during T. De Angelis and K. Glover's visits
to Uppsala University and E.~Ekstr\"om's visit to University of Leeds for the
research week ``Stopping Games for Ambiguity-Averse Players". We thank the GS Magnuson fund,
School of Maths at University of Leeds and the Heilbronn Institute for funding these visits.}
\begin{document}

\begin{abstract}
We study  the value and the optimal strategies for a two-player zero-sum optimal stopping game with incomplete and asymmetric information.
In our Bayesian set-up, the drift of the underlying diffusion process is unknown to one player (incomplete information feature), but known to the other one (asymmetric information feature).
We formulate the problem and reduce it to a fully Markovian setup where the uninformed player optimises over stopping times and the informed one uses randomised stopping times in order to hide their informational advantage.
Then we provide a general verification result which allows us to find the value of the game and players' optimal strategies by solving suitable quasi-variational inequalities with some non-standard constraints.
Finally, we study an example with linear payoffs, in which an explicit solution of the corresponding quasi-variational inequalities can be obtained.
\end{abstract}

\maketitle

\section{Introduction}

The primary focus in this paper is to devise methods to establish the existence of the value and of players' optimal strategies for two-player Dynkin games with incomplete and asymmetric information. The process underlying the game is a one-dimensional linear diffusion $X$. Both players observe the paths of $X$ and Player 2 (the {\em informed} player) knows exactly the drift and diffusion coefficient of the process.
Player 1 (the {\em uninformed} player) has incomplete information in the sense that she cannot observe directly the drift coefficient of $X$ but has a prior distribution for it and can improve upon her initial estimate by sequential observation of the process.

Crucially, the one-sided lack of information introduces an {\em asymmetry} in the game because, contrarily to the informed player,
the uninformed one cannot compute the true expected payoff of the game (for each given stopping rule).
In line with the literature on games with asymmetric information, it turns out that the informed player must use randomised stopping strategies in order to maximise the benefits of her informational advantage.
Randomisation allows the informed player to reveal information in a strategic manner to make the uninformed player act in a certain desirable way. Loosely speaking we could say that it allows the informed player to `hide' the true drift from the uniformed one in an optimal way.
On the contrary, the uninformed player cannot improve her performance by using randomisation (see Remark \ref{rem:randomise}) and therefore will simply rely on stopping times for the filtration generated by $X$.

The key contributions of the paper are: (i) we give an explicit Markovian formulation of the problem and show its equivalence with an {\em interim} version of the game (also called {\em agent-form} game), i.e., a three-player nonzero-sum game of singular control and optimal stopping (Section \ref{sec:pi}); the game is non-standard in the sense that the singular controls (played by Player 2) are not observable by Player 1; (ii) building on the previous item we formulate a
verification theorem that allows us to construct optimal strategies
for the original ({\em ex-ante}) game with incomplete and asymmetric information
(Section~\ref{sec:verif}); the verification result (Theorem~\ref{prop:verifPDE}) is formulated in terms of a quasi-variational inequality with a set of non-standard constraints; it appears that such constraints are a special feature of the asymmetric information setting; (iii) using the quasi-variational inequality approach we solve explicitly (up to numerical root-finding) a version of our game with linear payoffs (Section \ref{sec:linear}); the example illustrates how {\em reflected adjusted likelihood ratios}, introduced in Section~\ref{sec:phi*}, enable the strategical use of information in an optimal way.

To the best of our knowledge all three items above are new in the context of diffusive random dynamics. In particular, we would like to emphasise that the majority of papers on zero-sum games with continuous-time dynamics and asymmetric information focus on the existence of a value for the game, whereas the construction of
optimal strategies for both the informed and the uninformed player is mostly overlooked (we will elaborate more on this point in the literature review below). In this sense, we depart from the existing literature and present a feasible method for the characterisation of optimal strategies of the players. Moreover, we show that such optimal strategies form a Nash equilibrium in the agent-form game (interim version of the game).
As it turns out in our analysis, the informed player stops according to a generalised intensity
specified in such a way that the adjusted likelihood ratio process is {\em reflecting} along a certain boundary. The uninformed player instead stops at a hitting time of this reflected process to another boundary. We thus show that reflection of the adjusted likelihood ratio plays a vital role in Dynkin games with asymmetric information.


\subsection{Motivations and literature review}

Dynkin games were originally introduced in \cite{D} as a {\em game variant} of optimal stopping problems. Their popularity in the last two decades is largely due to their applications to finance. Indeed many financial contracts are equipped with exit strategies that allow one or several parties to abandon their obligations early but at an additional cost. These `exit options' embedded in the contracts are known in the mathematical finance literature as {\em game options}.

In 2000, Kifer \cite{K} showed that the arbitrage-free price of a game option can be found by solving a related zero-sum Dynkin game.
In the full information case, general conditions under which the game has a saddle point were derived in \cite{LM2} (in a martingale setting) and in \cite{EP} (in a Markovian set-up) in the case when both players
prefer the other player to stop first (so-called {\em war of attrition}).
Further studies (see \cite{LaSo}, \cite{TV}) derived the existence of a value and $\ep$-optimal strategies
for general zero-sum Dynkin games.

Acknowledging the importance of information in applications of such games, more recent literature has considered games with asymmetric information structures.
For example, asymmetric information about the time horizon of the game was considered in \cite{LM}, who concluded that, in the setting of that paper, the more you know, the longer you wait.
Gr\"un \cite{G} studied the effect of asymmetric information about the payoff structure of the game;
motivated by earlier studies (see \cite{C}, \cite{CR}) of differential games with asymmetric information as well as by an explicit example with no random dynamic, Gr\"un allowed the informed player to use randomised stopping strategies to manipulate the beliefs of the uninformed player, and she characterised the value of the game as the unique viscosity solution of a related variational inequality (see also \cite{Gens} for recent related work on differential games).
A more general situation was considered in \cite{GG}, in which each player has access to stopping times with respect to different filtrations.
In such a scenario each player must learn about the state of the world from the actions (or inaction) of the other player. Again, a variational characterisation of the value of the game is obtained in a similar form to \cite{G}. The article \cite{G} constructs optimal randomised stopping times for the informed player, whereas  \cite{GG} provides optimal randomised strategies for both players under a non-diffusive dynamics.
Note that the concept of `value' here (and in the existing literature on zero-sum games with asymmetric information) coincides with the so-called {\em ex-ante} value, i.e., before the informed player acquires the informational advantage. Once the informed player actually obtains the extra information, we obtain the {\em interim} version of the game. In the interim game the original concept of value coincides with the expected payoff of the uninformed player {\em in equilibrium} in an associated {\em agent-form} of the game (see Remarks \ref{rem:value} and \ref{rem:nzsg} for further details).

It is important to notice that the setting in \cite{G} is different from ours. In \cite{G} the observable dynamics are fully known to both players but there is asymmetry in knowledge about the payoff functions used in the game. In our problem, however, both players know the payoff functions used but there is asymmetry in knowledge about the drift of the observable dynamics. Hence, in contrast to our setting, in \cite{G} there is no learning from the observation of the process. It is also worth noticing that the variational problem in \cite{G} (and the one in \cite{GG}) looks very different from ours: Gr\"un obtains a single variational inequality (as opposed to our coupled variational problem in Theorem \ref{prop:verifPDE}) which involves three nested obstacle problems of the type `max-max-min'.
Existence of smooth solutions to such variational problems remains an open question. It does not seem trivial to show a clear connection between our variational problem and that in \cite{G}. However, our method allows us to solve an example with diffusive dynamics by proving that the associated quasi-variational inequality has a unique classical solution (see Section \ref{sec:linear}).

In \cite{EGL} a Dynkin game in which both players had differing beliefs about the drift of the underlying process was studied. However, in that article, information is fully symmetric and complete, with both players agreeing to disagree.
In comparison to \cite{EGL}, where the set-up involves no learning, and \cite{G} and \cite{GG}, where the players learn only from the actions of the opponent, our players are faced with a more complex, two-source, learning situation. In particular, the uninformed player learns about the drift of the underlying process by continuous observation of the process itself {\em and} from the actions of the informed player (or rather {\em the lack of actions}, since stopping is the only possible action).

Since learning is a key ingredient in our problem formulation, we naturally draw on the literature on stochastic filtering.
Early contributions in the area include treatments of statistical problems in sequential analysis, see for example \cite{Ba}, \cite{Ch} and \cite{S}.
A general treatment of stochastic filtering can be found in \cite{LS}, and some important early work on the application of such techniques to investment problems with incomplete information can be found in \cite{L1} and \cite{L2}.
More recent contributions along the financial lines include \cite{B},
\cite{DeA18}, \cite{M} and \cite{Z} (see also the references therein).
For optimal stopping in the context of incomplete information, an early reference is \cite{DMV} which treats the effect of incomplete information on American-style option valuation; see also \cite{Ga} and \cite{V}.
An optimal liquidation problem with unknown drift was studied in \cite{EV}, and with an unknown jump intensity in \cite{Lu}.
A two-player, zero-sum Dynkin game with symmetric and incomplete information was studied in \cite{DGV}, where optimal strategies for both players were derived.
Finally, a related paper from the economics literature is \cite{DG}, which considers the problem of a privately informed seller  trading in a market of less informed buyers, and where information about the asset's type (`good' or `bad') is gradually revealed to them. In this setting, the market places offers based on this information and on the observation of the offers rejected by the seller so far. A key difference with the current paper is that the buyers (i.e., the market) are non-strategic since the reaction of the market to new information is fully prescribed by a function of the underlying process.

\subsection{Outline of the paper}
We conclude with an outline of the material in the paper.
In Section~\ref{sec:form} we formulate the general Dynkin game and introduce the class of randomised stopping times used by the informed player.
The learning dynamics are derived and the game is reformulated as an equivalent game of stopping and singular control in Section~\ref{sec:pi}. In Section~\ref{sec:phi*} we explain how a given strategy of the informed player affects the beliefs of the uninformed one.
Next, a verification result based on quasi-variational inequalities is provided in Section~\ref{sec:verif}.
Finally, Section~\ref{sec:linear} investigates in detail an example with linear payoffs,
and Section~\ref{sec:numeric} illustrates numerically the value of the game, with a base-case set of parameters providing intuition for the optimal strategies used by the players.

\section{Setting}\label{sec:form}

Assume that on a probability space $(\Omega, \mathcal F, \mathbb P)$ we have two random variables $\theta$ and $\mathcal U$
together with a standard Wiener process $W$ mutually independent of each other, and such that
$\P(\theta=1)=\pi$ and $\P(\theta=0)=1-\pi$ where $\pi\in (0,1)$ and $\mathcal U$ is uniformly distributed on $[0,1]$.
We consider an optimal stopping game written on an underlying process $X$ with dynamics
\begin{align}\label{X00}
dX_t=\left((1-\theta)\mu_0 (X_t) + \theta \mu_1(X_t)\right)\,dt + \sigma (X_t)\,dW_t
\end{align}
on a (possibly unbounded) open interval $\mathcal I$.
Here $\mu_0(\cdot)$, $\mu_1(\cdot)$ and $\sigma(\cdot)>0$ are given Lipschitz continuous functions such that the state space of $X$ is $\mathcal I$ on both events $\{\theta=0\}$ and $\{\theta=1\}$.
Then \eqref{X00} admits a strong solution,
and to avoid further technicalities we assume that the boundary points of $\mathcal I$ are unattainable.

The game is specified by Player~$1$ choosing a (random) time $\tau$ and Player~2 choosing a (random) time $\gamma$, and at $\tau\wedge\gamma$, Player~1 receives the amount
\[R(\tau,\gamma):=f(X_{\tau})1_{\{\tau< \gamma\}} + g(X_{\gamma})1_{\{\tau\ge \gamma\}}\]
from Player~2. Here the {\em payoff functions} $f$ and $g$ are two given functions satisfying $g\geq f\geq 0$.
The objective of Player~1 (2) is to choose $\tau$ ($\gamma$) from a set of admissible stopping strategies to maximise (minimise) the expected value of $R(\tau,\gamma)$. The notion of admissible stopping strategies will be specified below.
To avoid further technical complications, we will assume continuity of the payoff functions.

\begin{assumption}\label{ass:fg}
The payoff functions $f$ and $g$ are continuous on $\cI$.
\end{assumption}

The players are rational and the model is common knowledge, i.e., both players know the functions $f$, $g$, $\mu_0$, $\mu_1$ and $\sigma$ involved. Both players observe the process $X$, but we assume that Player~1 does not know whether $\theta$ is zero or one (equivalently, whether the drift is $\mu_0$ or $\mu_1$) whereas Player 2 does. Initially, the only available information for Player~1 is the distribution of $\theta$ given above, while Player~2 knows the true value of $\theta$ already at the start of the game (the opposite case can be treated similarly).
This asymmetry is modeled by letting the information available to Player 1 be given by the augmentation with $\P$-null sets of the filtration
\[\mathcal F^X_t:=\sigma(X_s,0\leq s\leq t),\]
whereas the information available to Player 2 is given by the augmentation of the filtration
\[\mathcal F^{X,\theta}_t:=\sigma(\theta, X_s,0\leq s\leq t).\]

When considering games with asymmetric information, a crucial aspect is the strategic release of the additional knowledge from the more informed player to the less informed one.
This is modelled mathematically by allowing stopping strategies for the informed player (Player 2) to be randomised stopping times.
A priori the uninformed player (Player 1) may also use randomised stopping times but it turns out in our verification theorem (Theorem \ref{prop:verifPDE}), and its application in Section \ref{sec:linear}, that it is sufficient for her to consider just stopping times (see also Remark \ref{rem:randomise}).

The following notations will be used in the rest of the paper. We let $\cF^X=(\cF^X)_{t\ge 0}$ and $\cF^{X,\theta}=(\cF^{X,\theta}_t)_{t\ge 0}$ and denote
\begin{align*}
&\cT:=\{\text{$\tau$: $\tau$ is a $\P$-a.s. finite $\cF^X$-stopping time}\}\\
&\overline\cT:=\{\text{$\tau$: $\tau$ is an $\cF^X$-stopping time}\}\\
&\cA:=\{\text{$\Gamma$: $(\Gamma_t)_{t\ge 0-}$ is $\cF^X$-adapted and a.s. right-continuous,}\\
&\qquad\qquad\qquad\qquad\qquad \text{non-decreasing, with $\Gamma_{0-}=0$ and $\Gamma_\infty\le 1$}\}\\
&\cA^\theta:=\{\text{$\Gamma$: $(\Gamma_t)_{t\ge 0-}$ is $\cF^{X,\theta}$-adapted and a.s. right-continuous, }\\
&\qquad\qquad\qquad\qquad\qquad \text{ non-decreasing, with $\Gamma_{0-}=0$ and $\Gamma_\infty\le 1$}\}.
\end{align*}
In the definitions above we use $\Gamma_{0-}=0$ to indicate that $\Gamma_0>0$ can only be achieved by a jump of the process at time zero.

Clearly, $\cA\subseteq\cA^\theta$ and note that $\Gamma\in\cA^\theta$ if and only if
$\Gamma=\Gamma^0\mathds{1}_{\{\theta=0\}} +\Gamma^1\mathds{1}_{\{\theta=1\}}$ for some $\Gamma^0,\Gamma^1\in\cA$.
To define randomised stopping times (see, e.g., \cite{TV}), recall that $\cU$ is a random variable which is independent of $W$ and $\theta$ and Uniform(0,1)-distributed.

\begin{definition}[\textbf{Randomised stopping times}]\label{def-randomised}\

\begin{itemize}
\item A $\mathcal F^{X}$-randomised stopping time is a random variable $\gamma$ given by
\begin{equation}
\label{random}
\gamma=\inf\{t\geq 0:\Gamma_t>\cU\},\quad\text{for some $\Gamma\in\cA$.}
\end{equation}
We denote the set of $\mathcal F^{X}$-randomised stopping times by $\cT_R$.\\[+3pt]

\item A $\mathcal F^{X,\theta}$-randomised stopping time is a random variable $\gamma_\theta$ given by
\begin{equation}
\label{random2}
\gamma_\theta=\inf\{t\geq 0:\Gamma_t>\cU\},\quad\text{for some $\Gamma\in\cA^\theta$.}
\end{equation}
We denote the set of $\mathcal F^{X,\theta}$-randomised stopping times by $\cT^\theta_R$.
\end{itemize}
\end{definition}

We then have
\[\cT\subseteq\overline\cT\subseteq \cT_R\subseteq\cT^\theta_R.\]
Indeed, the first inclusion is clear by definition and the third inclusion is immediate from
$\cA\subseteq\cA^\theta$; moreover, if $\tau\in\overline\cT$, then the construction \eqref{random} with
\[\Gamma_t=\left\{\begin{array}{ll}
0 & t<\tau\\
1 & t\geq \tau\end{array}\right.\]
gives a randomised stopping time that coincides with $\tau$, which proves the middle inclusion.

Furthermore, any $\gamma_\theta\in\cT^\theta_R$ can be decomposed as
\[\gamma_\theta=\gamma_0 \mathds{1}_{\{\theta =0\}} + \gamma_1 \mathds{1}_{\{\theta =1\}}\]
for some $(\gamma_0,\gamma_1)\in\cT_R\times \cT_R$.
We say that $\gamma\in\cT_R$ is {\em generated} by $\Gamma\in\cA$ if $\gamma$ is defined as in \eqref{random}.
Similarly, $\gamma_\theta\in\cT_R^\theta$ is generated by $\Gamma\in\cA^\theta$ if $\gamma_\theta$ is defined as in \eqref{random2}.
For future reference, given a $\gamma\in\cT_R$ generated by $\Gamma\in \cA$, we also introduce $\cF^{X}$-stopping times (i.e.,
members of $\overline\cT$)
\begin{align}\label{gamma-u}
\gamma(z):=\inf\{t\geq 0:\Gamma_t>z\},\qquad \text{for all $z\in[0,1]$.}
\end{align}

\begin{definition}
A randomised stopping pair is a pair $(\tau,\gamma_\theta)\in\cT\times\cT^\theta_R$.
\end{definition}

With a slight abuse of notation, we sometimes write $\gamma_\theta=\Gamma=(\Gamma^0,\Gamma^1)$, where
$(\Gamma^0,\Gamma^1)$ is the
decomposition of $\Gamma$ that generates $\gamma_\theta$, and we refer also to $(\tau,\Gamma)\in\cT\times\cA^\theta$
as a randomised stopping pair.

Given a randomised stopping pair $(\tau,\gamma_\theta)\in\cT\times\cT^\theta_R$, the expected payoff of the game from the point of view of the uninformed player is
\begin{align}\label{def:J}
\cJ(\tau,\gamma_\theta)=\cJ(\tau,\Gamma^0,\Gamma^1):=\E\left[R(\tau,\gamma_\theta)\right].
\end{align}
We also say that this is the expected payoff of the {\em ex-ante} game
(see Remark \ref{rem:value} for further details around this interpretation of $\cJ$).
The lower value $\underline{v}$ and the upper value $\overline{v}$ of the game (for Player~1) are defined by
\begin{align}\label{def:v}
\underline{v}:=\sup_{\tau\in\cT}\inf_{\gamma_\theta\in\cT^\theta_R}\cJ(\tau,\gamma_\theta)\le\inf_{\gamma_\theta\in\cT^\theta_R}\sup_{\tau\in\cT}\cJ(\tau,\gamma_\theta)=:\overline{v},
\end{align}
and we say that a value $v$ exists if $\underline{v}=\overline{v}$.

\begin{definition}\label{def}
A randomised stopping pair $(\tau^*,\gamma_\theta^*)\in\cT\times\cT^\theta_R$ is a saddle point if
\begin{equation*}
\label{NE0}
\E\left[R(\tau,\gamma_\theta^*)\right] \leq \E\left[R(\tau^*,\gamma_\theta^*)\right]\leq \E\left[R(\tau^*,\gamma_\theta)\right]
\end{equation*}
for all other pairs $(\tau,\gamma_\theta)\in\cT\times\cT_R^\theta$.
\end{definition}

\begin{remark}\label{rem:interim}
For zero-sum games, it is standard to look at the notions of {\em value} of the game and players' {\em optimal strategies} (i.e.,  strategies that give a saddle point). The existence of a $(\tau^*,\gamma_\theta^*)$ for the ex-ante game
implies the existence of a value and that $\tau^*$ and $\gamma_\theta^*$ are optimal strategies for
Players 1 and 2, respectively.
Our approach will also involve the study of the {\em interim} (nonzero-sum) version of the game (or {\em agent-form} game),
for which the natural solution concept is that of Nash equilibrium.
\end{remark}

\begin{remark}
We restrict our attention to stopping times in $\cT$, i.e. stopping times that are finite $\mathbb P$-a.s.
This has the advantage that the notation and calculations become easier. Moreover, recalling that $g\ge f\ge 0$, a saddle point $(\tau^*,\gamma_\theta^*)\in \cT\times\cT^\theta_R$ (as in Definition~\ref{def}) would also be a saddle point for the corresponding game with strategies in $\overline \cT\times\cT^\theta_R$ and with expected payoff
\begin{align}\label{eq:J'}
\cJ'(\tau,\gamma_\theta):=\mathbb E[R(\tau,\gamma_\theta)\mathds 1_{\{\tau\wedge\gamma_\theta <\infty\}}].
\end{align}
Indeed, assume that $(\tau^*,\gamma^*_\theta)\in\cT\times\cT^\theta_R$ is a saddle point as in Definition \ref{def}.
By finiteness of $\tau^*$ and optimality of $\gamma^*_\theta$ we have
\[\cJ'(\tau^*,\gamma_\theta)=\cJ(\tau^*,\gamma_\theta)\ge \cJ(\tau^*,\gamma^*_\theta)= \cJ'(\tau^*,\gamma^*_\theta)\]
for all $\gamma_\theta\in\cT^\theta_R$. Moreover,
\begin{eqnarray*}
\cJ'(\tau,\gamma_\theta^*) &=&
\E\left[\liminf_{n\to\infty}R(\tau\wedge n,\gamma_\theta^*)\mathds{1}_{\{\tau\wedge\gamma_\theta^*<\infty\}}\right]
\leq \liminf_{n\to\infty}\E\left[R(\tau\wedge n,\gamma_\theta^*)\mathds{1}_{\{\tau\wedge\gamma_\theta^*<\infty\}}\right]\\
&\leq& \liminf_{n\to\infty}\E\left[R(\tau\wedge n,\gamma_\theta^*)\right]\leq \cJ(\tau^*,\gamma_\theta^*)
= \cJ'(\tau^*,\gamma_\theta^*)
\end{eqnarray*}
for any $\tau\in\overline\cT$,
where the second inequaility is by Fatou's lemma. Hence our claim is proved.
\end{remark}

\begin{remark}\label{rem:randomise}
If the game has a value ($\underline v=\overline v$ in \eqref{def:v}), there is no benefit for Player 1 in choosing a randomised stopping time (compare, e.g., \cite{LaSo}). Indeed, first note that
\[\sup_{\tau\in\overline\cT}\cJ'(\tau,\gamma_\theta)=\sup_{\tau\in\cT}\cJ(\tau,\gamma_\theta)\]
for any $\gamma_\theta\in\cT^\theta_R$ by Fatou's lemma and with $\cJ'$ as in \eqref{eq:J'} (see also the remark just above). Consequently,
for any $\gamma_\theta\in\cT^\theta_R$ and $\gamma\in\cT_R$ (that use two independent copies of $\mathcal{U}$ for the randomisation), we have
\begin{align*}
\cJ'(\gamma,\gamma_\theta)=&\int_0^1 \cJ'(\gamma(z),\gamma_\theta) \,dz \le \sup_{z\in [0,1]}\cJ'(\gamma(z),\gamma_\theta)\\
&\le \sup_{\tau\in\overline\cT}\cJ'(\tau,\gamma_\theta)=\sup_{\tau\in\cT}\cJ(\tau,\gamma_\theta),
\end{align*}
where we also recalled \eqref{gamma-u}.
The inequality above implies
\[
\underline{v}\le \sup_{\gamma\in\cT_R}\inf_{\gamma_\theta\in\cT^\theta_R}\cJ'(\gamma,\gamma_\theta)\le
\inf_{\gamma_\theta\in\cT^\theta_R}\sup_{\gamma\in\cT_R}\cJ'(\gamma,\gamma_\theta)\le \overline{v},
\]
which validates our claim, provided that $\underline{v}=\overline{v}$.
\end{remark}
Notice that the argument in the remark above requires that a value exists even if the uninformed player does not use a randomised stopping time. This situation does not occur in general but we may expect that it should hold in our game because of the combination of (at least) two facts: (1) the uninformed player (Player 1) has no information to hide and (2) she also has no incentive to avoid simultaneous stopping; indeed Player 1's payoff from simultaneous stopping is never worse than stopping on her own (due to $g\geq f$). In other words, randomisation is not required to counteract any copycat behaviour of the informed player that would force a lower payoff for the uninformed player (see \cite{LaSo}).

\begin{remark}
For bounded payoff functions $f$ and $g$, the set-up and results of the present article straightforwardly extend to the opposite case
when instead Player~1 knows the drift and Player~2 only has partial information. However, additional care is needed for unbounded payoffs; in particular, one needs to be careful with the specification of the payoff at time infinity, as well as in specifying appropriate transversality conditions as in Theorem~\ref{prop:verifPDE} below.
\end{remark}

\section{An equivalent game of stopping and singular control}\label{sec:pi}

Here we formulate the game in a Markovian setting and show that it is equivalent to a 3-player nonzero-sum game of
singular control and stopping. The latter corresponding to the {\em interim} version of the game.

We begin by rewriting the expected cost functional in a more explicit form, which takes into account Player 1's learning of the true drift through observations of the process $X$.
For $t\geq 0$ denote by
\begin{equation}
\label{Pi}
\Pi_t:=\P(\theta=1\vert \mathcal F_t^{X})
\end{equation}
the conditional probability of $\theta=1$ given observations of the underlying process $X$.
By standard filtering theory (see \cite[Chapter~9]{LS}) we have
\[dX_t=(\mu_0 (X_t)(1-\Pi_t)+ \mu_1(X_t)\Pi_t)\,dt + \sigma (X_t)\,dB_t,\qquad X_0=x\]
and
\begin{equation}
\label{Pi-dynamics}
d\Pi_t= \omega(X_t)\Pi_t(1-\Pi_t)\,dB_t,\qquad\Pi_0=\pi.
\end{equation}
Here the {\em innovation process}
\[B_t:=\int_0^t\frac{1}{\sigma(X_s)}dX_s -\int_0^t\frac{\mu_0 (X_s)+ (\mu_1(X_s)-\mu_0(X_s))\Pi_s}{\sigma(X_s)}\,ds\]
is a $(\mathbb P, \mathcal F^X)$-Brownian motion and $\omega(\cdot):=(\mu_1(\cdot)-\mu_0(\cdot))/\sigma(\cdot)$ is referred to as the {\em signal-to-noise ratio}.
Now the process $(X_t,\Pi_t)_{t\ge 0}$ is Markovian and adapted to $\cF^X$. In what follows, for $(x,\pi)\in\cI\times(0,1)$, we will denote
\[
\P_{x,\pi}(\,\cdot\,):=\P(\,\cdot\,|X_0=x,\Pi_0=\pi)\quad\text{and}\quad \E_{x,\pi}[\,\cdot\,]:=\E[\,\cdot\,|X_0=x,\Pi_0=\pi].
\]
Also, in \eqref{def:J} we use $\cJ_{x,\pi}(\tau,\gamma_\theta)$ to emphasise the dependence of the expected game payoff on the initial data.

In preparation for the reduction of our game to one of control and stopping, we introduce integrals of the form
\[
\int_0^\tau Y_{t-}\, d\Gamma_t:=Y_0\Gamma_0+\int_{(0,\tau]} Y_{t-}\, d\Gamma_t,
\]
for $\Gamma\in\cA$ and $Y$ a right-continuous, non-negative process adapted to $\cF^X$.
Integrals of this type are to be interpreted in the Lebesgue-Stieltjes sense, and it is important to remark that, in this context,
both the (possible) initial and terminal jumps of the process $\Gamma$ are accounted for.
Moreover, recalling \eqref{gamma-u} and using \cite[Prop.~4.9, Ch.~0]{RY}, we have
\begin{align}\label{RY-int}
\int_0^1 g(X_{\gamma(z)})\mathds{1}_{\{\gamma(z)\le \tau\}}dz =\int_0^\tau\! g(X_t)d\Gamma_t
\end{align}
for $\tau\in\cT$.

\begin{proposition}
\label{prop:J}
For $(x,\pi)\in\cI\times(0,1)$ and any $(\tau,\gamma_\theta)\in\cT\times\cT^\theta_R$ we have
\begin{align}\label{J}
\cJ_{x,\pi} (\tau,\gamma_\theta)
=&\,\E_{x,\pi}\left[(1-\Pi_\tau)(1-\Gamma^0_\tau)f(X_\tau) +(1- \Pi_\tau)\int_0^\tau g(X_t)d\Gamma^0_t\right]\\
& + \E_{x,\pi}\left[\Pi_\tau (1-\Gamma^1_\tau)f(X_\tau) + \Pi_\tau\int_0^\tau g(X_t)d\Gamma^1_t\right],\nonumber
\end{align}
where $(\Gamma^0,\Gamma^1)\in\cA\times\cA$ is the couple that generates $\gamma_\theta$.
\end{proposition}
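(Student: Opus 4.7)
My plan is to evaluate $\cJ_{x,\pi}(\tau,\gamma_\theta)=\E_{x,\pi}[R(\tau,\gamma_\theta)]$ by conditioning on $\cF^X_\tau$ and exploiting the independence structure of $(\theta,\cU,W)$. I begin by writing $\gamma_\theta=\gamma_0\mathds{1}_{\{\theta=0\}}+\gamma_1\mathds{1}_{\{\theta=1\}}$, with $\gamma_i=\inf\{t\geq 0:\Gamma^i_t>\cU\}$ generated by $\Gamma^i\in\cA$, and split
\begin{equation*}
\cJ_{x,\pi}(\tau,\gamma_\theta)=\sum_{i=0,1}\E_{x,\pi}\bigl[R(\tau,\gamma_i)\mathds{1}_{\{\theta=i\}}\bigr].
\end{equation*}
The $i=0$ and $i=1$ summands will match, line by line, the two lines of \eqref{J}.

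The next step is to condition on $\cF^X_\tau$. Two structural facts drive the argument: (i) $\Pi_\tau=\E[\mathds{1}_{\{\theta=1\}}|\cF^X_\tau]$ by the very definition \eqref{Pi}, so $\E[\mathds{1}_{\{\theta=i\}}|\cF^X_\tau]$ equals $\Pi_\tau$ or $1-\Pi_\tau$ according to $i=1$ or $i=0$; (ii) since $\cU$ is independent of $(\theta,W)$ and $\cF^X_\tau\subset\cF^X_\infty\subset\sigma(\theta,W)$ up to null sets, $\cU$ is independent of $\sigma(\theta)\vee\cF^X_\tau$ and remains Uniform$(0,1)$ under $\P(\,\cdot\,|\,\cF^X_\tau)$. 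Because both $\mathds{1}_{\{\gamma_i\leq\tau\}}$ and $X_{\gamma_i}\mathds{1}_{\{\gamma_i\leq\tau\}}$ are determined by $\cU$ and by the restriction of $(\Gamma^i,X)$ to $[0,\tau]$, the random variable $R(\tau,\gamma_i)$ is conditionally independent of $\theta$ given $\cF^X_\tau$, so
\begin{equation*}
\E\bigl[R(\tau,\gamma_i)\mathds{1}_{\{\theta=i\}}\,\big|\,\cF^X_\tau\bigr]=\E\bigl[R(\tau,\gamma_i)\,\big|\,\cF^X_\tau\bigr]\,\E\bigl[\mathds{1}_{\{\theta=i\}}\,\big|\,\cF^X_\tau\bigr].
\end{equation*}

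The core of the proof is then the inner conditional expectation. By monotonicity and right-continuity of $\Gamma^i$, one checks that almost surely $\{\gamma_i>\tau\}=\{\cU\geq\Gamma^i_\tau\}$ and $\{\gamma_i\leq\tau\}=\{\cU<\Gamma^i_\tau\}$, which immediately gives $\E[f(X_\tau)\mathds{1}_{\{\gamma_i>\tau\}}|\cF^X_\tau]=(1-\Gamma^i_\tau)f(X_\tau)$. The contribution of $g$ is handled using the conditional Uniform$(0,1)$ law of $\cU$:
\begin{equation*}
\E\bigl[g(X_{\gamma_i})\mathds{1}_{\{\gamma_i\leq\tau\}}\,\big|\,\cF^X_\tau\bigr]=\int_0^{\Gamma^i_\tau}\!g(X_{\gamma_i(u)})\,du=\int_0^1 \!g(X_{\gamma_i(u)})\mathds{1}_{\{\gamma_i(u)\leq\tau\}}\,du=\int_0^\tau \!g(X_t)\,d\Gamma^i_t,
\end{equation*}
where the middle equality uses the analogous equivalence $\{\gamma_i(u)\leq\tau\}=\{u<\Gamma^i_\tau\}$ for $\gamma_i(\cdot)$ defined in \eqref{gamma-u}, and the last equality is precisely the Revuz--Yor identity \eqref{RY-int}. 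Combining the pieces and taking outer $\E_{x,\pi}[\cdot]$ delivers \eqref{J}.

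The main subtlety to handle is the conditional independence of $\gamma_i$ and $\theta$ given $\cF^X_\tau$ together with the boundary bookkeeping (initial/terminal atoms of $\Gamma^i$ and the strict versus non-strict inequality between $\cU$ and $\Gamma^i_\tau$); once those are set up properly, the invocation of \eqref{RY-int} converts the uniform-sampling integral into the Lebesgue--Stieltjes integral in \eqref{J}, and the remainder is routine linearity and the tower property.
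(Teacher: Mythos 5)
Your proposal is correct and follows essentially the same route as the paper's proof: decompose over $\{\theta=0\}$ and $\{\theta=1\}$, condition on $\cF^X_\tau$ using the independence of $\cU$ from $(\theta,W)$ and the a.s.\ identification of $\{\tau<\gamma_i\}$ with $\{\Gamma^i_\tau\le\cU\}$ (the paper records this as the two inclusions in \eqref{inclusions}), and convert the uniform-sampling integral into the Lebesgue--Stieltjes integral via \eqref{RY-int}. Your conditional-independence factorisation of $\E[R(\tau,\gamma_i)\mathds{1}_{\{\theta=i\}}\,|\,\cF^X_\tau]$ is just a repackaging of the paper's computation of $\P_{x,\pi}(\tau<\gamma_0\,|\,\cF^X_\tau,\theta=0)$, so there is nothing substantively new or missing.
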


\begin{proof}
By definition of the game's payoff and by the definition of $\cT^\theta_R$ we have
\begin{eqnarray}\label{J00}
\cJ_{x,\pi}\,(\tau,\gamma_\theta) &=&\E_{x,\pi}\left[f(X_\tau)\mathds{1}_{\{\tau<\gamma_\theta\}}+ g(X_{\gamma_\theta})\mathds{1}_{\{\gamma_\theta\le \tau\}}\right]\nonumber\\[+4pt]
&=&\E_{x,\pi}\left[f(X_\tau)\mathds{1}_{\{\tau<\gamma_0\}\cap\{\theta=0\}}+g(X_{\gamma_0})\mathds{1}_{\{\gamma_0 \le \tau\}\cap\{\theta=0\}}\right]\\[+4pt]
&&+\E_{x,\pi}\left[f(X_\tau)\mathds{1}_{\{\tau<\gamma_1\}\cap\{\theta=1\}}+g(X_{\gamma_1})\mathds{1}_{\{\gamma_1 \le \tau\}\cap\{\theta=1\}}\right].\nonumber
\end{eqnarray}
With the aim of using the tower property in the expression above, we claim that
\begin{align}
\label{p1}&\E_{x,\pi}\left[f(X_\tau)\mathds{1}_{\{\tau<\gamma_0\}\cap\{\theta=0\}}\big|\cF^X_\tau\right]
= (1-\Pi_\tau)f(X_\tau)(1-\Gamma^0_\tau) \\[+4pt]
\label{p2}&\E_{x,\pi}\left[f(X_\tau)\mathds{1}_{\{\tau<\gamma_1\}\cap\{\theta=1\}}\big|\cF^X_\tau\right]
=\Pi_\tau f(X_\tau)(1-\Gamma^1_\tau)\\[+4pt]
\label{p3}&\E_{x,\pi}\left[g(X_{\gamma_0})\mathds{1}_{\{\gamma_0 \le \tau\}\cap\{\theta=0\}}\big|\cF^X_\tau\right]=(1-\Pi_\tau)\int_0^\tau g(X_t)d\Gamma^0_t\\
\label{p4}&\E_{x,\pi}\left[g(X_{\gamma_1})\mathds{1}_{\{\gamma_1 \le \tau\}\cap\{\theta=1\}}\big|\cF^X_\tau\right]
=\Pi_\tau\int_0^\tau g(X_t)d\Gamma^1_t.
\end{align}
Taking conditional expectation inside \eqref{J00} and using the above expressions we obtain \eqref{J}.

It therefore only remains to prove the formulae above. Let us start by noticing that
\begin{equation}
\label{inclusions}
\{\Gamma^0_\tau<\cU \}\subseteq \{\tau<\gamma_0\}\subseteq \{\Gamma^0_\tau\leq\cU \}.
\end{equation}
Since $X_{\tau}$ is $\cF^X_\tau$-measurable, using simple properties of conditional expectation and \eqref{Pi} we have
\begin{align*}
\E_{x,\pi}\left[f(X_\tau)\mathds{1}_{\{\tau<\gamma_0\}\cap\{\theta=0\}}\big|\cF^X_\tau\right]=f(X_\tau)\P_{x,\pi}\left(\tau<\gamma_0\big|\cF^X_\tau,\theta=0\right)(1-\Pi_\tau).
\end{align*}
Then, by definition of $\gamma_0$, using that $\cU$ is independent of $\theta$, $\Gamma^0_\tau$ is $\cF^X_\tau$-measurable and
\eqref{inclusions}, we also obtain
\[\P_{x,\pi}\left(\tau<\gamma_0\big|\cF^X_\tau,\theta=0\right)=
\P_{x,\pi}\left(\Gamma^0_\tau\le \cU \big|\cF^X_\tau,\theta=0\right) = (1-\Gamma^0_\tau).\]
Combining the last two expressions leads to \eqref{p1}. Clearly \eqref{p2} follows by the same argument.

For \eqref{p3} we follow a similar approach and we also recall $\gamma(u)$ as in \eqref{gamma-u} and \eqref{RY-int}. Then we have
\begin{eqnarray}\label{p5}
\E_{x,\pi}\left[g(X_{\gamma_0})\mathds{1}_{\{\gamma_0\le \tau\}\cap\{\theta=0\}}\big|\cF^X_\tau\right]
&=&\E_{x,\pi}\left[g(X_{\gamma_0})\mathds{1}_{\{\gamma_0\le \tau\}}\big|\cF^X_\tau,\theta=0\right](1-\Pi_\tau)\\[+3pt]
&=&\E_{x,\pi}\left[\int_0^1 g(X_{\gamma_0(z)})\mathds{1}_{\{\gamma_0(z)\le \tau\}}dz\big|\cF^X_\tau,\theta=0\right](1-\Pi_\tau)\nonumber\\
&=&(1-\Pi_\tau)\int_0^1 g(X_{\gamma_0(z)})\mathds{1}_{\{\gamma_0(z)\le \tau\}}dz \nonumber\\
&=&(1-\Pi_\tau)\int_0^\tau g(X_t)d\,\Gamma^0_t\,,\notag
\end{eqnarray}
where in the penultimate equality we used that $g(X_{\gamma_0(z)})\mathds{1}_{\{\gamma_0(z)\le \tau\}}$ is $\cF^X_\tau$-measurable for all $z\geq 0$, and the last equality is due to \eqref{RY-int}.

The proof of \eqref{p4} is analogous.
\end{proof}

\begin{remark}\label{rem:intuition}
Intuitively the expression in \eqref{J} can be interpreted as follows: imagine the informed player announces the $\cF^{X,\theta}$-randomised stopping strategy $\Gamma$ that she intends to use; then the uninformed player (or any other external observer) can evaluate the expected payoff associated to any choice of stopping time $\tau\in\cT$ and any sample path $t\mapsto X_t(\omega)$ of the underlying process. In particular, given $\tau\in\cT$ the term $(1-\Pi_\tau)$ is the probability associated to $\{\theta=0\}$ based on the observation of the path of $X$ until time $\tau$, while $(1-\Gamma^0_\tau)$ is the probability that the informed player does not stop before time $\tau$ if the event $\{\theta=0\}$ has occurred. Therefore, for a given $\omega\in\Omega$, the quantity $(1-\Pi_\tau(\omega))(1-\Gamma^0_\tau(\omega))$ represents the probability that the uninformed player will stop before the informed one on the event $\{\theta=0\}$. A symmetric argument can be applied to the term $\Pi_\tau(1-\Gamma^1_\tau)$, which is the probability that Player 1 stops before Player 2 on the event $\{\theta=1\}$. Combining the two, for each $\omega\in\Omega$ Player 1 has a probability $(1-\Pi_\tau(\omega))(1-\Gamma^0_\tau(\omega))+\Pi_\tau(\omega)(1-\Gamma^1_\tau(\omega))$ to stop before Player 2 and to receive $f(X_\tau(\omega))$.

Analogous considerations can be applied to the integral terms. On the event $\{\theta=i\}$, $i=0,1$, the increment $d\Gamma^i_t$ measures the probability that Player 2 stops during the (infinitesimal) time interval $[t,t+dt)$. Then, for each $\omega\in\Omega$, the integral
\[
\int_0^\tau g(X_t(\omega))d\Gamma^i_t(\omega),\quad i=0,1,
\]
is the accumulated expected
payoff received by Player 1 prior to time $\tau(\omega)$ if $\theta(\omega)=i$.
\end{remark}

It will be convenient in what follows to also use the {\em likelihood ratio} process $\Phi_t:=\Pi_t/(1-\Pi_t)$,
whose dynamics under $\P$ are derived from \eqref{Pi-dynamics} and It\^o's formula as
\begin{align}\label{Phi00}
\frac{d\Phi_t}{\Phi_t}=\omega(X_t)\left(dB_t+\Pi_t\, \omega(X_t)dt\right),\qquad \Phi_0=\varphi,
\end{align}
where $\varphi=\pi/(1-\pi)$.
The dynamics of the two-dimensional diffusion $(X,\Phi)$ are somewhat involved under $\P$, and we prefer instead to use the measures $\P^0$ and $\P^1$ specified by
\[\P^i(A):= \P(A\,\vert\,\theta =i)\]
for $A\in\mathcal F^X_\infty$.
It is well-known (see \cite[Chapter 9]{LS}) that
\begin{align}
\label{P0}&\frac{d \P^0}{d\P}\Big|_{\mathcal F^X_t}= \frac{1-\Pi_t}{1-\pi} = \frac{1+\varphi}{1+\Phi_t}
=\exp\left(-\tfrac{1}{2}\int_0^t\omega^2(X_s)\Pi^2_s ds-\!\int_0^t\omega(X_s)\Pi_s \,dB_s\right),\\[+3pt]
\label{P1}&\frac{d\P^1}{d\P}\Big|_{\cF^X_t}=\frac{\Pi_t}{\pi}=\exp\left(-\tfrac{1}{2}\int_0^t\omega^2(X_s)(1-\Pi_s)^2 ds+\!\int_0^t\omega(X_s)(1-\Pi_s) dB_s\right),
\end{align}
and that $X$ and $\Phi$ satisfy
\begin{equation}\label{hatPdynamics}
\left\{\begin{array}{l}
dX_t=\mu_i (X_t)\,dt + \sigma (X_t)\,d W^i_t\\
d\Phi_t=\omega(X_t)\Phi_t\,d\WO_t\\
\hspace{7mm}=\omega^2(X_t)\Phi_tdt+\omega(X_t)\Phi_t\,d\WI_t, \end{array}\right.
\end{equation}
where
\[W^i_t:=-\int_0^t\omega(X_s)(i-\Pi_s)\,ds+B_t\]
is a $\P^i$-Brownian motion, for $i=0,1$.
Note that the system \eqref{hatPdynamics} is semi-decoupled in the sense that the dynamics of
$X$ do not depend on $\Phi$. Also notice that
\begin{align}\label{eq:P1P0}
\Phi_t=\frac{d\P^1}{d\P^0}\Big|_{\cF^X_t},\quad\text{for $t\in[0,\infty)$},
\end{align}
by \eqref{P0} and \eqref{P1}.

We now rewrite our problem under the measure $\P^0$.
In what follows we set $\E^i[\,\cdot\,]$ for the expectation under the measure $\P^i$, with $i=0,1$.

\begin{corollary} {\bf (The expected payoff for the uninformed player.)}
\label{game-reduced}
For $(x,\pi)\in\cI\times(0,1)$ and any $(\tau,\gamma_\theta)\in\cT\times\cT^\theta_R$ we have
\begin{align}\label{JJ1}
\cJ_{x,\pi}(\tau,\gamma_\theta) =&\frac{1}{1+\varphi} \left(\E_{x,\pi}^0\left[(1-\Gamma^0_\tau)f(X_\tau) + \int_0^\tau  g(X_t)d\Gamma^0_t\right]\right.\\
&\hspace{+20pt} + \left.\E_{x,\pi}^0\left[(1-\Gamma^1_\tau)\Phi_\tau f(X_\tau) +\int_0^\tau \Phi_t g(X_t)d\Gamma^1_t\right]\right),\nonumber
\end{align}
where $\varphi=\pi/(1-\pi)$.
\end{corollary}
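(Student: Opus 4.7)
The plan is to start from the representation \eqref{J} of Proposition~\ref{prop:J} and change the reference measure on each of the four summands from $\P$ to $\P^0$. Since the density $M_t := (1-\Pi_t)/(1-\pi)$ appearing in \eqref{P0} is bounded above by $1/(1-\pi)$, the process $M$ is a uniformly integrable $\P$-martingale and optional stopping yields $d\P^0/d\P|_{\cF^X_\tau} = M_\tau$ for every $\tau\in\cT$. Using $1-\pi=1/(1+\varphi)$ this gives
\[
\E[(1-\Pi_\tau)\,Z_\tau]\;=\;\tfrac{1}{1+\varphi}\,\E^0[Z_\tau]
\]
for any nonnegative $\cF^X_\tau$-measurable $Z_\tau$. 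Substituting $Z_\tau=(1-\Gamma^0_\tau)f(X_\tau)$ and $Z_\tau=\int_0^\tau g(X_t)\,d\Gamma^0_t$ (both $\cF^X_\tau$-measurable since $X,\Gamma^0$ are $\cF^X$-adapted and $\tau$ is an $\cF^X$-stopping time) recovers the first line of \eqref{JJ1}. For the $\Pi_\tau$-summands I would use $\Pi_\tau=\Phi_\tau(1-\Pi_\tau)$ (immediate from $\Phi=\Pi/(1-\Pi)$) and apply the same identity, obtaining $\tfrac{1}{1+\varphi}\E^0[\Phi_\tau Z_\tau]$. With $Z_\tau=(1-\Gamma^1_\tau)f(X_\tau)$ this directly produces the $(1-\Gamma^1_\tau)\Phi_\tau f(X_\tau)$ term of \eqref{JJ1}; with $Z_\tau=\int_0^\tau g(X_t)\,d\Gamma^1_t$ it produces $\tfrac{1}{1+\varphi}\E^0\big[\Phi_\tau\int_0^\tau g(X_t)\,d\Gamma^1_t\big]$, which differs from the target $\tfrac{1}{1+\varphi}\E^0\big[\int_0^\tau\Phi_t g(X_t)\,d\Gamma^1_t\big]$ only by the placement of $\Phi$.

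To close this remaining gap I would invoke the $\P^0$-martingale character of $\Phi$. Setting $A_t:=\int_0^t g(X_s)\,d\Gamma^1_s$, the process $A$ is $\cF^X$-adapted, non-decreasing and of finite variation; by \eqref{hatPdynamics} the process $\Phi$ is a continuous $\P^0$-local martingale, so in particular $[\Phi,A]\equiv 0$. Itô's product formula, taking into account the paper's convention that $\int_0^\tau$ incorporates any initial jump of $\Gamma^1$, yields
\[
\Phi_\tau A_\tau\;=\;\int_0^\tau \Phi_t\, g(X_t)\,d\Gamma^1_t\;+\;\int_{0+}^\tau A_{t-}\,d\Phi_t.
\]
Taking $\P^0$-expectations and arguing that the stochastic integral on the right has zero expectation (see next paragraph) gives $\E^0[\Phi_\tau A_\tau]=\E^0\big[\int_0^\tau\Phi_t g(X_t)\,d\Gamma^1_t\big]$, which transports the leftover summand into the desired form and completes the derivation.

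The main technical point is justifying the vanishing expectation of $\int_{0+}^\tau A_{t-}\,d\Phi_t$ when $\tau\in\cT$ is only $\P$-a.s.\ finite and $\Phi$ is a priori a $\P^0$-local martingale. The natural route is to stop at $\tau_n:=\tau\wedge\sigma_n$ for a localising sequence $(\sigma_n)\uparrow\infty$: at each bounded level the stochastic integral becomes a true $\P^0$-martingale (using $A_{\tau_n}\le\|g\|_\infty\Gamma^1_\infty\le\|g\|_\infty$ when $g$ is bounded and the normalisation $\E^0[\Phi_t]=\varphi$), so the identity holds with $\tau$ replaced by $\tau_n$. One then passes to the limit using the $\P^0$-a.s.\ finiteness of $\tau$ (inherited from $\P$ via the uniform bound $M_t\le 1/(1-\pi)$), monotone convergence on $\int_0^{\tau_n}\Phi_t g(X_t)\,d\Gamma^1_t\uparrow\int_0^\tau\Phi_t g(X_t)\,d\Gamma^1_t$, and a Fatou/continuity argument on $\Phi_{\tau_n}A_{\tau_n}\to\Phi_\tau A_\tau$. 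This integrability step is what I expect to be the principal obstacle; once it is in place, the four term-by-term identities combine to produce \eqref{JJ1}.
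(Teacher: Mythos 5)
Your overall strategy coincides with the paper's: both proofs start from \eqref{J}, convert each of the four summands into a $\P^0$-expectation using the densities \eqref{P0}--\eqref{P1} (your optional-stopping phrasing and the paper's tower-property argument over $\{\theta=0\}$ are equivalent), and then the entire content of the corollary reduces to the single identity
$\E^0_{x,\pi}\big[\Phi_\tau\int_0^\tau g(X_t)\,d\Gamma^1_t\big]=\E^0_{x,\pi}\big[\int_0^\tau\Phi_t\, g(X_t)\,d\Gamma^1_t\big]$,
which both you and the paper obtain from the product rule together with the fact that $\Phi$ is a continuous $\P^0$-(local) martingale. The gap is in how you close the localisation. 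First, your bound $A_{\tau_n}\le\|g\|_\infty\Gamma^1_\infty$ presupposes that $g$ is bounded, whereas Assumption \ref{ass:fg} only requires $g$ continuous; the paper truncates the payoff as $g_n:=n\wedge g$ precisely for this reason and recovers the general case by monotone convergence at the very end. Second, and more seriously, the limit passage $\E^0[\Phi_{\tau_n}A_{\tau_n}]\to\E^0[\Phi_\tau A_\tau]$ is justified only by ``Fatou/continuity''. Fatou yields the one-sided bound $\E^0[\Phi_\tau A_\tau]\le\lim_n\E^0[\Phi_{\tau_n}A_{\tau_n}]=\E^0\big[\int_0^\tau\Phi_t\,dA_t\big]$ (the last equality by monotone convergence), but the reverse inequality needs a domination or uniform-integrability input that is not available for free: $\Phi$ is \emph{not} a uniformly integrable $\P^0$-martingale (on $\cF^X_\infty$ the measures $\P^0$ and $\P^1$ are typically singular), so the family $\{\Phi_{\tau_n}\}_n$ need not be uniformly integrable and the residual term $\E^0\big[\Phi_{\tau_n}A_{\tau_n}\mathds{1}_{\{\tau_n<\tau\}}\big]$ is not obviously negligible.

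The paper sidesteps this by performing the time-localisation limit under $\P$ rather than under $\P^0$: with $\tau_m:=\inf\{t\ge0:\Pi_t\ge m/(m+1)\}\wedge\tau\wedge m$ one has $\Pi_{\tau_m}\int_0^{\tau_m}g_n(X_t)\,d\Gamma^1_t\le n$, so dominated convergence handles the limit in $m$ on the $\P$-side, while on $[0,\tau_m]$ the process $\Phi$ is bounded by $m$ so the integration-by-parts identity holds exactly at each level; the $\P^0$-side then converges by monotone convergence (both $m\mapsto\tau_m$ and $n\mapsto g_n$ increase and the integrand is nonnegative), so no Fatou-type loss can occur. If you prefer to remain on the $\P^0$-side, you can repair your argument by rewriting $\E^0\big[\Phi_{\tau_n}A_{\tau_n}\mathds{1}_{\{\tau_n<\tau\}}\big]$ as a $\P^1$-expectation of $A_{\tau_n}\mathds{1}_{\{\tau_n<\tau\}}$ via the change of measure \eqref{eq:P1P0} and showing it vanishes, but that again requires $A$ bounded, i.e.\ the $g_n$-truncation. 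In short: right strategy and the same key identity as the paper, but the interchange of limits at the end needs the two truncations (of $g$, and of $\Pi$ or equivalently $\Phi$) that the paper carries out.
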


\begin{proof}
We start by looking at the first term on the right-hand side of \eqref{J}. For any $\tau\in\cT$, we recall that $\Pi_\tau=\P(\theta=1|\cF_\tau)$, so by the tower property and the definition of $\P^0$ we get
\begin{align}\label{change0}
\E\left[(1-\Pi_\tau)(1-\Gamma^0_\tau)f(X_\tau)\right]&=\E\left[(1-\Gamma^0_\tau)f(X_\tau)1_{\{\theta=0\}}\right]\\
&= \E\left[(1-\Gamma^0_\tau)f(X_\tau)|\theta=0\right]\P(\theta=0)\notag\\
&= (1-\pi)\E^0\left[(1-\Gamma^0_\tau)f(X_\tau)\right].\notag
\end{align}
By the same argument we also obtain
\[
\E\left[(1-\Pi_\tau)\int_0^\tau g(X_t)d\Gamma^0_t\right]=(1-\pi)\E^0\left[\int_0^\tau g(X_t)d\Gamma^0_t\right].
\]

For the remaining terms in \eqref{J} we notice first that
\begin{align*}
\E\left[\Pi_\tau(1-\Gamma^1_\tau)f(X_\tau)\right]&=\E\left[(1-\Pi_\tau)(1-\Gamma^1_\tau)\Phi_\tau f(X_\tau)\right]\\
&=(1-\pi)\E^0\left[(1-\Gamma^1_\tau)\Phi_\tau f(X_\tau)\right].
\end{align*}
Second, setting $g_n:=n\wedge g$ and $\tau_m:=\inf\{t\ge 0: \Pi_t\ge m/(m+1)\}\wedge\tau\wedge m$ we have
\begin{align*}
\E\left[\Pi_\tau\int_0^\tau\!\! g(X_t)d\Gamma^1_t\right]=&\lim_{n\to\infty}\E\left[\Pi_\tau\int_0^\tau\!\! g_n(X_{t})d\Gamma^1_{t}\right]
=\lim_{n\to\infty}\lim_{m\to\infty}\E\left[\Pi_{\tau_m}\int_0^{\tau_m}\!\! g_n(X_{t})d\Gamma^1_{t}\right]
\end{align*}
where the first equality holds by monotone convergence and the second one by dominated convergence. Then, for fixed $n,m>0$, we have
\begin{align*}
\E\left[\Pi_{\tau_m}\int_0^{\tau_m}\!\! g_n(X_{t})d\Gamma^1_{t}\right]=\E\left[(1-\Pi_{\tau_m})\Phi_{\tau_m}\int_0^{\tau_m}\!\! g_n(X_{t})d\Gamma^1_{t}\right]=(1-\pi)\E^0\left[\Phi_{\tau_m}\int_0^{\tau_m}\!\! g_n(X_{t})d\Gamma^1_{t}\right],
\end{align*}
by the same argument as in \eqref{change0}. The process $(\Phi_{t\wedge\tau_m})_{t\ge 0}$ is a continuous $\P^0$-martingale with values in $(0,m]$ and moreover
\[
0\le \int_0^{\tau_m\wedge s}\!\! g_n(X_{t})d\Gamma^1_{t}\le n\quad\text{for all $s\ge 0$}.
\]
Then, by Ito's formula we have
\[
\E^0\left[\Phi_{\tau_m}\int_0^{\tau_m}\!\! g_n(X_{t})d\Gamma^1_{t}\right]=\E^0\left[\int_0^{\tau_m}\!\! \Phi_{t}\,g_n(X_{t})d\Gamma^1_{t}\right].
\]
The latter implies
\[
\E\left[\Pi_\tau\int_0^\tau\!\! g(X_t)d\Gamma^1_t\right]=(1-\pi)\lim_{n\to\infty}\lim_{m\to\infty}\E^0\left[\int_0^{\tau_m}\!\! \Phi_{t}\,g_n(X_{t})d\Gamma^1_{t}\right]=(1-\pi)\E^0\left[\int_0^{\tau}\!\! \Phi_{t}\,g(X_{t})d\Gamma^1_{t}\right]
\]
by monotone convergence.
Combining the above expressions we obtain \eqref{JJ1} upon noticing that $1-\pi=(1+\varphi)^{-1}$.
\end{proof}
The expression in \eqref{JJ1} has the same intuitive meaning as explained in Remark \ref{rem:intuition} but with the likelihood ratio in place of the probabilities $\Pi_\tau$ and $1-\Pi_\tau$.
The next corollary follows in a similar way using \eqref{P0} and \eqref{P1} in the first and second term on the right-hand side of \eqref{J}, respectively.

\begin{corollary}
\label{informed-player}{\bf (The expected cost for the informed player.)}
For $(x,\pi)\in\cI\times(0,1)$ and any $(\tau,\gamma_\theta)\in\cT\times\cT^\theta_R$ we have
\begin{eqnarray}\label{Javg}
\mathcal J_{x,\pi}(\tau,\gamma_\theta) &=& (1-\pi)\mathcal J^0_{x,\pi}(\tau,\Gamma^0) + \pi \mathcal J^1_{x,\pi}(\tau,\Gamma^1) ,
\end{eqnarray}
where
\begin{align}\label{J0}
\mathcal J^0_{x,\pi}(\tau,\Gamma^0) := \E^0_{x,\pi}\left[(1-\Gamma^0_\tau)f(X_\tau) + \int_0^\tau g(X_t)d\Gamma^0_t\right]
\end{align}
and
\begin{align}\label{J1}
\mathcal J^1_{x,\pi}(\tau,\Gamma^1) := \E^1_{x,\pi}\left[(1-\Gamma^1_\tau)f(X_\tau) + \int_0^\tau g(X_t)d\Gamma^1_t\right].
\end{align}
\end{corollary}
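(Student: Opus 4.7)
The plan is to take the expression \eqref{J} from Proposition~\ref{prop:J} and convert each of its four summands to an expectation under the appropriate conditional measure: the two terms carrying $(1-\Pi_\tau)$ become $(1-\pi)$ times a $\P^0$-expectation via \eqref{P0}, and the two terms carrying $\Pi_\tau$ become $\pi$ times a $\P^1$-expectation via \eqref{P1}. Regrouping then yields exactly \eqref{Javg} with $\cJ^0$ and $\cJ^1$ as defined in \eqref{J0}--\eqref{J1}.

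For the two terminal-payoff summands the argument is immediate. The random variables $(1-\Gamma^i_\tau)f(X_\tau)$ are $\cF^X_\tau$-measurable, and $(1-\Pi_t)/(1-\pi)$, $\Pi_t/\pi$ are $\P$-martingales, so optional sampling (together with the tower property, exactly as in the computation \eqref{change0}) produces
\[
\E_{x,\pi}\bigl[(1-\Pi_\tau)(1-\Gamma^0_\tau)f(X_\tau)\bigr] = (1-\pi)\,\E^0_{x,\pi}\bigl[(1-\Gamma^0_\tau)f(X_\tau)\bigr],
\]
and the symmetric identity for the $\Pi_\tau$--$\P^1$ counterpart.

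For the two integral summands I would repeat the truncation/localisation scheme already carried out in the proof of Corollary~\ref{game-reduced}: approximate $g$ by $g_n := n\wedge g$, localise through a sequence $\tau_m$ on which the relevant density process is bounded, apply the change of measure to the resulting bounded, $\cF^X_{\tau_m}$-measurable quantity $\int_0^{\tau_m} g_n(X_t)\,d\Gamma^i_t$, and finally pass to the limit in $m$ and $n$ via monotone convergence to reach
\[
\E_{x,\pi}\Bigl[(1-\Pi_\tau)\!\int_0^\tau\! g(X_t)\,d\Gamma^0_t\Bigr] = (1-\pi)\,\E^0_{x,\pi}\Bigl[\int_0^\tau\! g(X_t)\,d\Gamma^0_t\Bigr],
\]
and analogously the $\Pi_\tau$--$\P^1$ identity.

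The only mildly delicate point is the integrability of the $g$-integrals when $g$ is unbounded, but this is handled by the very same truncation-plus-localisation device used in Corollary~\ref{game-reduced}. In fact the present situation is slightly cleaner, because no additional $\Phi_t$ factor appears inside the Stieltjes integrand; hence there is no need to invoke It\^o's formula to move a density factor through the integral sign, and monotone convergence alone is enough to close the argument. Summing the four identities and recognising $\cJ^0$ and $\cJ^1$ in the result gives \eqref{Javg}.
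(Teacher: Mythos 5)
Your proposal is correct and follows essentially the same route as the paper, which simply notes that the corollary follows from \eqref{J} by applying the changes of measure \eqref{P0} and \eqref{P1} to the $(1-\Pi_\tau)$- and $\Pi_\tau$-terms respectively, exactly as in \eqref{change0}. The only remark is that your truncation/localisation step for the integral summands is superfluous: since here no $\Phi_t$ factor has to be moved inside the Stieltjes integral (the $\Pi_\tau$-terms go to $\P^1$ rather than $\P^0$), the tower-property argument of \eqref{change0} applies verbatim to the nonnegative $\cF^X_\tau$-measurable random variable $\int_0^\tau g(X_t)\,d\Gamma^i_t$ by monotone convergence alone, as you yourself observe at the end.
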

\begin{remark}\label{rem:value}
The expression in \eqref{Javg} offers the following interpretation of the functional $\cJ_{x,\pi}$. Imagine that before the game starts (i.e., at time $t=0-$), neither of the players knows $\theta$. However, they both know that as soon as the game starts (i.e., at time $t=0$) Player 2 will learn the true value of $\theta$. Then, we can think of $\cJ_{x,\pi}$ as the expected payoff for both players at time $t=0-$ (given the randomised stopping pair $(\tau,\gamma_\theta)$).  As one would expect in this context, the payoff at time $t=0-$ is the average according to the prior distribution of $\theta$ of the payoffs in the two possible scenarios (the {\em ex-ante} payoff of the game).

As soon as the game starts at time $t=0$, the payoff of the informed player `collapses' into either $\cJ^0_{x,\pi}$ or $\cJ^1_{x,\pi}$ because she learns the true value of $\theta$. On the contrary, the expected payoff of Player~1 remains $\cJ_{x,\pi}$. This situation corresponds to the {\em interim} version of the game, i.e., after the information has been acquired by the informed player.

It is worth noting that many papers in the literature on asymmetric games with continuous-time dynamics (see, e.g., \cite{C,CR,GG,G}) only use the payoff $\cJ_{x,\pi}$ for their analysis. The `value' of the game in those papers corresponds in our setting to the expected equilibrium payoff for the uninformed player in the {\em interim} game or, equivalently, to the value of the game {\em ex-ante}.
\end{remark}

We now come to the final formulation of the game's expected payoff, which is also the one that we find most convenient for our solution method. For $(\tau,\gamma_\theta)\in\cT\times\cT^\theta_R$ and $\varphi=\pi/(1-\pi)$, let us denote
\begin{align}\label{JJ}
\JJ_{x,\varphi}(\tau,\gamma_\theta):=(1+\varphi)\cJ_{x,\pi}(\tau,\gamma_\theta).
\end{align}
The next result connects a saddle point for
the ex-ante version of our game with a Nash equilibrium for its interim version (in the same spirit as in \cite{H}).

\begin{proposition}\label{prop:NE}
Let $(x,\varphi)\in\cI\times\R_+$ be given. A randomised stopping pair $(\tau^*,\gamma^*_\theta)\in\cT\times\cT^\theta_R$ is a saddle point in the {\em ex-ante} game (Definition \ref{def}) if and only if it is a Nash equilibrium in the {\em agent-form} game. That is, if and only if, letting $(\Gamma^{*,0},\Gamma^{*,1})\in\cA\times\cA$ be the couple that generates $\gamma^*_\theta$, we have
\begin{equation}
\label{NE2}
\mathcal J^0_{x,\varphi}(\tau^* ,\Gamma^{*,0}) \leq \mathcal J^0_{x,\varphi}(\tau^* ,\Gamma^0),
\end{equation}
\begin{equation}
\label{NE3}
\mathcal J^1_{x,\varphi}(\tau^* ,\Gamma^{*,1}) \leq \mathcal J^1_{x,\varphi}(\tau^* ,\Gamma^1)
\end{equation}
and
\begin{equation}
\label{NE1}
\JJ_{x,\varphi}(\tau,\gamma^*_\theta)\leq \JJ_{x,\varphi}(\tau^*,\gamma^*_\theta)
\end{equation}
for all randomised stopping pairs $(\tau,\Gamma)\in \cT\times\cA^\theta$.
\end{proposition}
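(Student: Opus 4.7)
The proof plan rests on a single algebraic identity. Corollary \ref{informed-player} gives $\cJ_{x,\pi}(\tau,\gamma_\theta) = (1-\pi)\cJ^0_{x,\pi}(\tau,\Gamma^0) + \pi\,\cJ^1_{x,\pi}(\tau,\Gamma^1)$, and multiplying by $(1+\varphi)$ and using $(1+\varphi)(1-\pi)=1$ together with $(1+\varphi)\pi=\varphi$ yields the clean decomposition
\begin{equation}\label{eq:decomp-plan}
\JJ_{x,\varphi}(\tau,\gamma_\theta)=\cJ^0_{x,\pi}(\tau,\Gamma^0)+\varphi\,\cJ^1_{x,\pi}(\tau,\Gamma^1).
\end{equation}
The crucial structural point is that on the right-hand side, $\Gamma^0$ and $\Gamma^1$ appear in \emph{separate} terms; this will let us decouple the minimisation over $\Gamma\in\cA^\theta$ into two independent minimisations, each over $\cA$.

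For the forward direction (saddle implies Nash), I would start from the two saddle inequalities in Definition \ref{def} and multiply by the positive constant $(1+\varphi)$ to obtain $\JJ_{x,\varphi}(\tau,\gamma_\theta^*)\le\JJ_{x,\varphi}(\tau^*,\gamma_\theta^*)\le\JJ_{x,\varphi}(\tau^*,\gamma_\theta)$. The first inequality is exactly \eqref{NE1}. For the second, I would plug in the decomposition \eqref{eq:decomp-plan}: the resulting inequality
\begin{equation*}
\cJ^0_{x,\pi}(\tau^*,\Gamma^{*,0})+\varphi\,\cJ^1_{x,\pi}(\tau^*,\Gamma^{*,1})\le \cJ^0_{x,\pi}(\tau^*,\Gamma^0)+\varphi\,\cJ^1_{x,\pi}(\tau^*,\Gamma^1)
\end{equation*}
must hold for every $(\Gamma^0,\Gamma^1)\in\cA\times\cA$. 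Specialising once to $\Gamma^1=\Gamma^{*,1}$ (and letting $\Gamma^0$ vary) yields \eqref{NE2}, and specialising to $\Gamma^0=\Gamma^{*,0}$ (with $\Gamma^1$ varying) yields \eqref{NE3}.

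For the reverse direction (Nash implies saddle), the left saddle inequality is precisely \eqref{NE1} after dividing by $(1+\varphi)$. For the right one, I would add \eqref{NE2} to $\varphi$ times \eqref{NE3} (both preserve direction since $\varphi\ge 0$) to recover the joint inequality above, then read it through \eqref{eq:decomp-plan} as $\JJ_{x,\varphi}(\tau^*,\gamma_\theta^*)\le\JJ_{x,\varphi}(\tau^*,\gamma_\theta)$, and finally divide by $(1+\varphi)$ to land back on $\cJ_{x,\pi}(\tau^*,\gamma_\theta^*)\le\cJ_{x,\pi}(\tau^*,\gamma_\theta)$ for every $\gamma_\theta\in\cT^\theta_R$.

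I do not expect a genuine obstacle here: once the decomposition \eqref{eq:decomp-plan} is noted, everything is linear-algebraic. The only subtlety worth mentioning is the bijection between $\cT^\theta_R$ and $\cA\times\cA$ via $\Gamma=(\Gamma^0,\Gamma^1)$, which legitimises the specialisations $\Gamma^1=\Gamma^{*,1}$ and $\Gamma^0=\Gamma^{*,0}$ used to separate \eqref{NE2} from \eqref{NE3}; this was already established in the discussion following Definition \ref{def-randomised}, so no new work is required.
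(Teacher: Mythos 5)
Your proposal is correct and follows essentially the same route as the paper: the paper's proof also rests on the decomposition $\JJ_{x,\varphi}(\tau,\gamma_\theta)=\cJ^0_{x,\varphi}(\tau,\Gamma^0)+\varphi\,\cJ^1_{x,\varphi}(\tau,\Gamma^1)$ (their equation derived from Corollary \ref{informed-player}) and the observation that $\Gamma^0$ and $\Gamma^1$ appear in separate terms, so minimising the sum is equivalent to minimising each term, while \eqref{NE1} is just the first saddle inequality. You have merely spelled out the specialise-and-add steps that the paper leaves implicit.
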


\begin{proof}
We have from \eqref{Javg} that
\begin{align}\label{JJ2}
\JJ_{x,\varphi}(\tau,\gamma_\theta)=\cJ^0_{x,\varphi}(\tau,\Gamma^0)+\varphi\cJ^1_{x,\varphi}(\tau,\Gamma^1).
\end{align}
It follows that a strategy
$\Gamma=(\Gamma^0,\Gamma^1)$ of the informed player minimises $\JJ_{x,\varphi}(\tau^*,\gamma_\theta)$
if and only if $\Gamma^0$ and $\Gamma^1$ minimise
$\mathcal J^0_{x,\varphi}(\tau^*,\Gamma^0)$ and $\mathcal J^1_{x,\varphi}(\tau^*,\Gamma^1)$, respectively. Condition \eqref{NE1} instead is the same as in Definition \ref{def}.
\end{proof}

For a Nash equilibrium $(\tau^*,\gamma^*_\theta)$ we refer to $\JJ_{x,\varphi}(\tau^*,\gamma^*_\theta)$, $\cJ^0_{x,\varphi}(\tau^* ,\Gamma^{*,0})$ and
$\cJ^1_{x,\varphi}(\tau^* ,\Gamma^{*,1})$ as the corresponding equilibrium payoffs.

\begin{remark}\label{rem:nzsg}
We observe that Proposition \ref{prop:NE} gives an interpretation of the game as a 3-player nonzero-sum game between a stopper and two controllers.
Notice that the stopper plays simultaneously against both controllers, whereas each controller only plays against the stopper. This is in parallel with classical results on games with incomplete information (see \cite{H} or \cite{AM}). In particular, the 3-player game described above can be interpreted as the `agent-form' of our (Bayesian) game, with \eqref{NE2}--\eqref{NE1} representing the `interim definition' of equilibrium.
\end{remark}

The next proposition provides a condition under which, if a Nash equilibrium exists in the agent-form game, it is possible to find another Nash equilibrium such that the informed player (Player 2) never stops on the event $\{\theta=0\}$. This result is useful to simplify the construction of a Nash equilibrium in certain cases, for example in the problem studied in Section~6.

\begin{proposition}\label{prop:G0}
Fix $(x,\varphi)\in\R_+\times\R_+$ and assume that $(\tau^*,\Gamma^{*,0},\Gamma^{*,1})\in\cT\times\cT^\theta_R$ is a
Nash equilibrium in the agent-form game such that
\begin{equation}
\label{cond}
 \cJ^0(\tau,0)\leq \cJ^0(\tau,\Gamma^{*,0})
\end{equation}
for all $\tau\in\cT$. Then $(\tau^*,0,\Gamma^{*,1})$ is also a Nash equilibrium.
\end{proposition}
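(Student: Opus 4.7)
The plan is to verify the three Nash conditions \eqref{NE2}--\eqref{NE1} for the new triple $(\tau^*,0,\Gamma^{*,1})$ by combining the hypothesis \eqref{cond} with the equilibrium properties of the original triple $(\tau^*,\Gamma^{*,0},\Gamma^{*,1})$ and the linear decomposition \eqref{JJ2} of $\JJ_{x,\varphi}$.

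\textbf{Step 1 (a forced equality at $\tau^*$).} Applying hypothesis \eqref{cond} at $\tau=\tau^*$ gives $\cJ^0(\tau^*,0)\le \cJ^0(\tau^*,\Gamma^{*,0})$, while the original condition \eqref{NE2} specialised to $\Gamma^0=0$ gives the reverse inequality $\cJ^0(\tau^*,\Gamma^{*,0})\le \cJ^0(\tau^*,0)$. Hence
\[
\cJ^0(\tau^*,0)=\cJ^0(\tau^*,\Gamma^{*,0}).
\]
This identity is what makes the whole argument go through: the stopper's equilibrium payoff is unchanged when we replace $\Gamma^{*,0}$ by $0$.

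\textbf{Step 2 (conditions \eqref{NE2} and \eqref{NE3} for the new triple).} Condition \eqref{NE3} is unchanged since $\Gamma^{*,1}$ is not modified. For \eqref{NE2}, chaining the hypothesis \eqref{cond} at $\tau=\tau^*$ with the original optimality of $\Gamma^{*,0}$ yields
\[
\cJ^0(\tau^*,0)\;\le\;\cJ^0(\tau^*,\Gamma^{*,0})\;\le\;\cJ^0(\tau^*,\Gamma^0)
\]
for every $\Gamma^0\in\cA$, which is exactly \eqref{NE2} for the pair $(\tau^*,0)$.

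\textbf{Step 3 (condition \eqref{NE1} for the new triple).} Using the decomposition \eqref{JJ2}, for any $\tau\in\cT$ we write
\[
\JJ_{x,\varphi}(\tau,(0,\Gamma^{*,1}))=\cJ^0(\tau,0)+\varphi\,\cJ^1(\tau,\Gamma^{*,1})\;\le\;\cJ^0(\tau,\Gamma^{*,0})+\varphi\,\cJ^1(\tau,\Gamma^{*,1})=\JJ_{x,\varphi}(\tau,\gamma^*_\theta),
\]
where the inequality uses \eqref{cond} for the generic $\tau$. By the Nash property \eqref{NE1} of the original equilibrium, $\JJ_{x,\varphi}(\tau,\gamma^*_\theta)\le \JJ_{x,\varphi}(\tau^*,\gamma^*_\theta)$, and by Step 1 the right-hand side equals $\cJ^0(\tau^*,0)+\varphi\,\cJ^1(\tau^*,\Gamma^{*,1})=\JJ_{x,\varphi}(\tau^*,(0,\Gamma^{*,1}))$. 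Concatenating these bounds gives \eqref{NE1} for $(\tau^*,(0,\Gamma^{*,1}))$.

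The only non-mechanical point is Step 1: without the identity $\cJ^0(\tau^*,0)=\cJ^0(\tau^*,\Gamma^{*,0})$ the chain in Step 3 would break at the final equality, so this is where the hypothesis \eqref{cond} plays its essential role. Everything else is a direct consequence of the linearity of $\JJ_{x,\varphi}$ in $(\Gamma^0,\Gamma^1)$ and of reusing the original Nash inequalities.
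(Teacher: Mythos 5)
Your proof is correct and follows essentially the same route as the paper's: both establish the key identity $\cJ^0(\tau^*,0)=\cJ^0(\tau^*,\Gamma^{*,0})$ by combining \eqref{cond} at $\tau^*$ with \eqref{NE2} at $\Gamma^0=0$, then verify \eqref{NE2}--\eqref{NE3} by chaining inequalities, and finally prove \eqref{NE1} via the decomposition \eqref{JJ2} together with \eqref{cond} at a generic $\tau$. The only difference is presentational (you isolate the forced equality up front, the paper derives it en route), so nothing substantive separates the two arguments.
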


\begin{proof}
First note that \eqref{NE3} holds since $(\tau^*,\Gamma^{*,0},\Gamma^{*,1})$ is a
Nash equilibrium. Moreover, for any $\Gamma^0\in\cT_R$,
\begin{equation}
\label{res}
\mathcal J^0_{x,\varphi}(\tau^* ,0) \leq
\mathcal J^0_{x,\varphi}(\tau^* ,\Gamma^{*,0})\leq \mathcal J^0_{x,\varphi}(\tau^* ,\Gamma^0),
\end{equation}
where the first inequality comes from \eqref{cond} and the second from \eqref{NE2}.
Thus \eqref{NE2} and \eqref{NE3} hold for the candidate equilibrium $(\tau^*,0,\Gamma^{*,1})$.

It remains to show that \eqref{NE1} holds for the candidate equilibrium. To do that, note first that
inserting $\Gamma^0=0$ in \eqref{res} yields
\[\mathcal J^0_{x,\varphi}(\tau^* ,0)=\mathcal J^0_{x,\varphi}(\tau^* ,\Gamma^{*,0}).\]
Consequently,
\[\JJ_{x,\varphi}(\tau^* ,0,\Gamma^{*,1})=\JJ_{x,\varphi}(\tau^* ,\Gamma^{*,0},\Gamma^{*,1}),\]
so
\begin{align*}
\JJ_{x,\varphi}(\tau ,0,\Gamma^{*,1}) \leq \JJ_{x,\varphi}(\tau ,\Gamma^{*,0},\Gamma^{*,1})\leq
\JJ_{x,\varphi}(\tau^* ,\Gamma^{*,0},\Gamma^{*,1})=\JJ_{x,\varphi}(\tau^* ,0,\Gamma^{*,1})
\end{align*}
for $\tau\in\cT$, where the first inequality follows from \eqref{cond} and the second one from $(\tau^*,\Gamma^{*,0},\Gamma^{*,1})$
being a Nash equilibrium. This completes the proof.
\end{proof}

\section{Adjusted beliefs and Nash equilibria}\label{sec:phi*}

If an equilibrium exists in the agent-form game, then both players are able to compute it, in the sense that they both know the stopping time $\tau^*$ and the increasing processes $\Gamma^{*,0}$ and $\Gamma^{*,1}$ that are used to generate $\gamma^*_\theta$ (we do not consider the question of uniqueness of equilibria in this paper). Given the generating processes $\Gamma^{*,0}$  and $\Gamma^{*,1}$, the uninformed player calculates what we refer to as the \emph{adjusted posterior probability}
\begin{align}\label{gammath}
\Pi^{*}_t:=\P(\theta=1\big|\cF^X_t,\gamma^*_\theta>t),\qquad t\ge 0.
\end{align}
Thus, while the posterior probability $\Pi_t$ is only based on the observation of the sample path of $X$, the adjusted posterior probability also takes into account
an assumed strategy of the informed player.

Using properties of conditional expectations we can write
\begin{equation}\label{pg0}
\Pi^{*}_t
=\frac{\P(\theta=1, \gamma^*_\theta>t\big|\cF^X_t)}{\P(\gamma^*_\theta>t\big|\cF^X_t)}
=\frac{\P(\gamma^*_1>t\big|\cF^X_t,\theta=1)\P(\theta=1\big|\cF^X_t)}{\P(\gamma^*_\theta>t\big|\cF^X_t)}=\frac{(1-\Gamma^{*,1}_t)\Pi_t}{\P(\gamma^*_\theta>t\big|\cF^X_t)},
\end{equation}
where the last equality is obtained using the same arguments as those used in the proof of Proposition~\ref{prop:J}.
Similarly, for the denominator we have
\begin{align}\label{pg2}
\P(\gamma^*_\theta>t\big|\cF^X_t)
&=\P(\gamma^*_0>t\big|\cF^X_t,\theta=0)\P(\theta=0\big|\cF^X_t) +\P(\gamma^*_1>t\big|\cF^X_t,\theta=1)\P(\theta=1\big|\cF^X_t)\\
&=(1-\Gamma^{*,0}_t)(1-\Pi_t)+(1-\Gamma^{*,1}_t)\Pi_t.\nonumber
\end{align}
Combining \eqref{pg0}--\eqref{pg2} gives
\begin{align}\label{pg3}
\Pi^{*}_t=\frac{(1-\Gamma^{*,1}_t)\Phi_t}{1-\Gamma^{*,0}_t+(1-\Gamma^{*,1}_t)\Phi_t},
\end{align}
and then it becomes straightforward to see that the adjusted posterior probability satisfies
\begin{align}\label{eq:phi*}
\Phi^{*}_t:=\frac{\Pi^{*}_t}{1-\Pi^{*}_t}=\Phi_t \frac{1-\Gamma^{*,1}_t}{1-\Gamma^{*,0}_t}, \qquad t\ge 0.
\end{align}
Thus $\Phi_t^*$ is the likelihood ratio of the adjusted posterior probability, or the {\em adjusted likelihood ratio}.

There is a subtle point, whose understanding is key to the proof of Theorem \ref{prop:verifPDE} below. When constructing Nash equilibria in the agent-form game we will need, in particular, to verify conditions \eqref{NE2} and \eqref{NE3}; so the question arises as to what $\tau^*$ should depend on. First and foremost, we recall that processes $\Gamma\in\cA^\theta$ are not observable by Player~1 because the two players do not communicate (they only see their opponent stop at some point). Therefore, if Player 2 plays a non-equilibrium pair $(\Gamma^0,\Gamma^1)$ the stopping time $\tau^*$ is not affected by this (sub-optimal) choice. However, both players know the equilibrium pair $(\tau^*,\Gamma^*)$. Hence, Player 1's choice of $\tau^*$ may at most depend (and it will) on the adjusted belief process associated to $\Gamma^*$. That is, one should expect that $\tau^*$ is a stopping time for the paths of the process $(X,\Phi^*)$. Hence we are naturally led to consider equilibria in {\em open-loop} strategies.

\section{A verification result}\label{sec:verif}

In this section we provide a verification result (Theorem \ref{prop:verifPDE}) which addresses the question of existence of a Nash equilibrium in the agent-form game (i.e., equivalently of a saddle point for the ex-ante game)
from the point of view of PDE theory. In particular we show that a triple of functions $(u, u^0,u^1)$ with $u:=u_0+\varphi u_1$  that solves an appropriate quasi-variational inequality provides the equilibrium payoffs for the game as in \eqref{NE2}, \eqref{NE3} and \eqref{NE1}. This is done by identifying a Nash equilibrium from the candidate functions $(u, u^0,u^1)$.
The formulation in terms of a quasi-variational inequality bridges the probabilistic formulation of our problem to PDE theory and will be used in the next section to construct a full solution to a specific example with linear payoffs.

Denote by $W^{2,\infty}_{loc}(\cI\times(0,+\infty))$ the usual Sobolev space of functions in $L^\infty_{loc}$ whose first and second derivatives are also functions in $L^\infty_{loc}$ (recall also that letting $C^1_K$ be the space of $C^1$ functions on a compact $K$, by Sobolev embedding $W^{2,\infty}_{loc}\subset C^1_K$ for any compact $K$, \cite[Thm.~4.12]{Adams}).
In what follows, for $i=0,1$, denote by $\LL^i$ the second order differential operator associated with the dynamics of $(X,\Phi)$ under the measure $\P^i$, that is
\begin{align}
\notag
\LL^0:=& \tfrac{1}{2}\Big(\omega^2(x)\varphi^2 \partial_{\varphi\varphi}+\sigma^2(x)\partial_{xx}+2(\sigma\omega)(x)\varphi\partial_{x\varphi}\Big)+\mu_0(x)\partial_x\,,\\
\LL^1:=& \tfrac{1}{2}\Big(\omega^2(x)\varphi^2 \partial_{\varphi\varphi}+\sigma^2(x)\partial_{xx}+2(\sigma\omega)(x)\varphi\partial_{x\varphi}\Big) +\mu_1(x)\partial_x+\omega^2(x)\varphi\partial_\varphi\,.\notag
\end{align}

In the next theorem we will use the following localising sequences of stopping times: for a $C^1$ function $h$, let
\begin{align*}
I(h)_t:= \int_0^t \!\!\left(\sigma^2(X_s)(\partial_x h)^2(X_s,\Phi^*_{s-})+\omega^2(X_s)\Phi^*_{s-}(\partial_\varphi h)^2(X_s,\Phi^*_{s-})\right)ds,
\end{align*}
with $\Phi^*$ as in \eqref{eq:phi*}, then we set
\begin{align}\label{tau-n}
\tau_n(h):=\inf\left\{t\ge 0:I(h)_t\ge n\right\}\wedge n.
\end{align}
Before stating the theorem we also notice that given a set $U\subset \cI\times(0,+\infty)$, its closure should be understood relatively to $\cI\times(0,+\infty)$, in the sense that $\overline U$ does not include the boundary of the state-space, i.e.~$\overline U\cap \partial(\cI\times(0,+\infty))=\varnothing$.

\begin{theorem}[\textbf{Quasi-variational inequality}]\label{prop:verifPDE}
Let Assumption \ref{ass:fg} hold. Let $u, u^0, u^1:\cI\times(0,+\infty)\to [0,\infty)$ be
continuous functions with $u:=u^0+\varphi u^1$. Denote
\begin{align*}
&\C:=\{(x,\varphi)\in\cI\times(0,+\infty): u(x,\varphi)> (1+\varphi)f(x)\}\,,\\
&\C^i:=\{(x,\varphi)\in\cI\times(0,+\infty): u^i(x,\varphi)< g(x)\}\,,
\end{align*}
and $\cS:=(\cI\times(0,+\infty))\setminus\C$, $\cS^i:=(\cI\times(0,+\infty))\setminus\C^i$ for $i=0,1$.

For $i=0,1$, assume that
\[
u\in W^{2,\infty}_{loc}(\C^0\cap\C^1 )\cap C^1(\overline{\C^0\cap\C^1})\cap C^2(\overline{\C\cap\C^0\cap\C^1}),\quad\text{and}\quad u^i\in C^2(\overline{\C\cap\C^0\cap\C^1})
\]
and that $(u,u^0,u^1)$ solve the quasi-variational inequality
\begin{align}
\label{PDEu}&\max\{\LL^0 u (x,\varphi), (1+\varphi)f(x)-u(x,\varphi)\} = 0, \quad a.e.~(x,\varphi)\in\C^0\cap\C^1\,,\\
\label{PDEui}&\LL^i u^i (x,\varphi)=0, \qquad\qquad\text{for all $(x,\varphi)\in\C\cap\C^0\cap\C^1$ and for $i=0,1$},
\end{align}
with the additional conditions $u^i\le g$ for $i=0,1$ and
\begin{align}
\label{cont0}&u^i(x,\varphi)=f(x),\quad\text{for $(x,\varphi)\in\cS$,}\\
\label{smooth0}&u^i_\varphi(x,\varphi)=0,\quad\text{for $(x,\varphi)\in \cS^0\cup\cS^1$}.
\end{align}

Assume also that there exists $\Gamma^*\in\cA^\theta$, with $\P^i(\Gamma^{*,0}_t<1)=1$ and $\P^i(\Gamma^{*,1}_t<1)=1$, for all $t\geq 0$ and $i=0,1$, such that, recalling \eqref{eq:phi*},
we have: $\P^0$ and $\P^1$-a.s.,
\begin{align}
\label{SK2}& \Delta\Gamma^{*,0}_t\cdot\Delta\Gamma^{*,1}_t = 0,\:\:\text{for all $t\ge 0$}, \\[+3pt]
\label{SK0}&(X_t,\Phi^*_t)\in \overline{\C^0\cap\C^1},\:\:\text{for all $t\ge 0$},\\[+3pt]
\label{SK1}&\left\{
\begin{array}{l}
\text{for $i=0,1$ and for all $t\ge 0$},\\
d\,\Gamma^{i,*}_t=\mathds{1}_{\{(X_t,\Phi^*_{t-})\in\cS^i\}}d\,\Gamma^{i,*}_t\:\:\text{and}\\[+4pt]
\int_{\Phi_{t-}^*}^{\Phi_{t}^*}\mathds{1}_{\{(X_t,z)\notin\cS^i\}}dz=0.
\end{array}
\right.
\end{align}
Moreover, assume that $\tau^*:=\inf\{t\ge 0\,:\,(X_t,\Phi^*_t)\notin\C\}$ is finite $\P$-a.s., and that the transversality conditions
\begin{align}
\label{tr2}\lim_{n\to+\infty}\E^i_{x,\varphi}\left[\mathds{1}_{\{\tau^*>\tau_n\}}u^i(X_{\tau_n},\Phi^*_{\tau_n})\right]=0,\quad\quad i=0,1,
\end{align}
hold for $\tau_n=\tau_n(u^i)$ and $\tau_n=\tau_n(u)$ as in \eqref{tau-n}, and for all
$(x,\varphi)\in\cI\times(0,+\infty)$.

Then, letting $\gamma^*_\theta\in\cA^\theta$ be the randomised stopping time generated by $\Gamma^*$, we have that $(\tau^*,\gamma^*_\theta)$ forms a Nash equilibrium in the agent-form game (i.e., a saddle point in the ex-ante game). Consequently, a value $v$ exists in the ex-ante game, and the equilibrium payoffs in the agent-form game are given by
\begin{align}\label{tr}
v=u(x,\varphi)=\JJ_{x,\varphi}(\tau^*,\gamma^*_\theta)\:\:\text{and}\:\: u^i(x,\varphi)=\cJ^i_{x,\varphi}(\tau^*,\gamma^*_\theta),\:\:\text{for $i=0,1$}.
\end{align}
\end{theorem}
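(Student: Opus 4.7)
The plan is to verify the three Nash-equilibrium conditions of Proposition~\ref{prop:NE}, simultaneously identifying $u^i$ and $u$ with the equilibrium payoffs. Each verification is built on the generalised It\^o formula applied to a weighted product $(1-\Gamma)\cdot v(X_t,\Phi^*_t)$ under an appropriate change of measure ($\P^i$ for the Player~2 optimality conditions, $\P^0$ for the Player~1 one), where on intervals where $\Gamma^*$ is constant the pair $(X,\Phi^*)$ carries the same generator $\LL^i$ as $(X,\Phi)$. The structural constraints \eqref{SK0}--\eqref{SK2}, the smooth-fit type condition \eqref{smooth0}, the boundary data \eqref{cont0}, and the implicit boundary values $u^i=g$ on $\cS^i$ are engineered so that, after the It\^o expansion, the BV and jump corrections contributed by $\Gamma^{*,0}$ and $\Gamma^{*,1}$ either vanish (in Player~2's argument) or telescope exactly against the running-payoff integrals (in Player~1's argument).

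\textbf{Player~2's side: conditions \eqref{NE2} and \eqref{NE3}.} Fix $i\in\{0,1\}$ and a test strategy $\Gamma^i\in\cA$. I would apply the product rule and It\^o formula to $(1-\Gamma^i_t)u^i(X_t,\Phi^*_t)$ under $\P^i$ on $[0,\tau^*\wedge\tau_n(u^i)]$. Where $\Gamma^*$ is constant the PDE \eqref{PDEui} together with \eqref{SK0} kills the drift $\LL^i u^i$ along the trajectory. At every continuous BV increment or jump of $\Gamma^{*,j}$, \eqref{SK1} localises the increment to $\cS^j$, where \eqref{smooth0} forces $u^i_\varphi=0$; combined with the second line of \eqref{SK1} and the non-simultaneity \eqref{SK2}, this kills both the continuous BV contribution and the jump contribution to $du^i$. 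What survives is
\[
d\bigl[(1-\Gamma^i_t)u^i(X_t,\Phi^*_t)\bigr] = (1-\Gamma^i_{t-})\,d(\text{local martingale})_t - u^i(X_{t-},\Phi^*_{t-})\,d\Gamma^i_t.
\]
Taking $\E^i$, using $u^i\le g$, and letting $n\to\infty$ via \eqref{cont0} for the terminal evaluation and \eqref{tr2} with $\tau_n(u^i)$ for the tail yields $u^i(x,\varphi)\le \cJ^i(\tau^*,\Gamma^i)$. Specialising to $\Gamma^i=\Gamma^{*,i}$, whose support is carried by $\cS^i=\{u^i=g\}$, the inequality $u^i\le g$ becomes an equality, giving the identification $u^i(x,\varphi)=\cJ^i(\tau^*,\Gamma^{*,i})$.

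\textbf{Player~1's side: condition \eqref{NE1}.} Using Corollary~\ref{game-reduced} together with $\Phi_t(1-\Gamma^{*,1}_t)=(1-\Gamma^{*,0}_t)\Phi^*_t$ from \eqref{eq:phi*}, I would rewrite
\[
\JJ_{x,\varphi}(\tau,\gamma^*_\theta) = \E^0_{x,\varphi}\Big[(1-\Gamma^{*,0}_\tau)(1+\Phi^*_\tau) f(X_\tau) + \int_0^\tau g(X_t)\,d\Gamma^{*,0}_t + \int_0^\tau \Phi_t g(X_t)\,d\Gamma^{*,1}_t\Big].
\]
The max structure of \eqref{PDEu} with \eqref{SK0} yields both $\LL^0 u\le 0$ and $u\ge (1+\varphi) f$ on the trajectory, so the first term is bounded by $(1-\Gamma^{*,0}_\tau)u(X_\tau,\Phi^*_\tau)$. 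It then suffices to show that
\[
Z_t := (1-\Gamma^{*,0}_t) u(X_t,\Phi^*_t) + \int_0^t g(X_s)\,d\Gamma^{*,0}_s + \int_0^t \Phi_s g(X_s)\,d\Gamma^{*,1}_s
\]
is a $\P^0$-local supermartingale with $Z_0=u(x,\varphi)$. An It\^o expansion of $(1-\Gamma^{*,0})u(X,\Phi^*)$ produces BV and jump contributions from both $d\Gamma^{*,0}$ and $d\Gamma^{*,1}$: the former combines $-u\,d\Gamma^{*,0}$ with a term from $u_\varphi=u^1$ on $\cS^0$ and collapses, via $u^0=g$ on $\cS^0$, to $-g\,d\Gamma^{*,0}$; the latter collapses analogously, via $u^1=g$ on $\cS^1$ and the identity $(1-\Gamma^{*,0}_{t-})\Phi^*_{t-}=(1-\Gamma^{*,1}_{t-})\Phi_t$, to $-g\Phi\,d\Gamma^{*,1}$. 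These exactly cancel the running-payoff integrals in $Z_t$, leaving $dZ_t=(1-\Gamma^{*,0}_{t-})\LL^0 u\,dt + d(\text{local martingale})_t$. Since $Z\ge 0$ and $\LL^0 u\le 0$, localising at $\tau_n(u)$, applying the optional-stopping inequality for the stopped supermartingale and then Fatou yields $\E^0[Z_\tau]\le u(x,\varphi)$ and hence the required inequality. At $\tau=\tau^*$ one recovers an equality because $\LL^0 u=0$ inside $\C$ (by the max in \eqref{PDEu}) and $(X_{\tau^*},\Phi^*_{\tau^*})\in\cS$ forces $u(X_{\tau^*},\Phi^*_{\tau^*})=(1+\Phi^*_{\tau^*}) f(X_{\tau^*})$.

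\textbf{Main obstacle.} The delicate step is the telescoping in the Player~1 expansion. The clean cancellation used in the Player~2 argument, namely "$u^i_\varphi=0$ on $\cS^0\cup\cS^1$ kills every BV/jump contribution", fails for $u$ because $u_\varphi=u^1$ on $\cS^0\cup\cS^1$. Instead one has to combine several algebraic identities at the same time: the boundary values $u^0=g$ on $\cS^0$ and $u^1=g$ on $\cS^1$, the likelihood-ratio relation \eqref{eq:phi*}, and the non-simultaneity \eqref{SK2} needed to unambiguously evaluate jumps, and then show that the extra BV/jump terms coming from differentiating $u$ through the semimartingale $\Phi^*$ conspire with the direct $-u\,d\Gamma^{*,0}$ term and a factor of $\Phi$ to exactly cancel the running-payoff integrals of $Z_t$. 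Continuous BV parts and jumps of each $\Gamma^{*,i}$ have to be treated as separate cases, with \eqref{SK1}'s second line being what collapses the jump-path integrals of $u_\varphi$ to a clean one-sided evaluation on $\cS^0$ or $\cS^1$.
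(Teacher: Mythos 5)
Your proposal is correct and follows essentially the same route as the paper's proof: It\^o's formula applied to $(1-\Gamma)\,v(X,\Phi^*)$ under $\P^i$ for the two controller conditions and under $\P^0$ for the stopper condition, with exactly the same cancellations (the BV/jump terms collapsing via $u^i_\varphi=0$ on $\cS^0\cup\cS^1$, $u^i=g$ on $\cS^i$, the relation $\Phi(1-\Gamma^{*,1})=(1-\Gamma^{*,0})\Phi^*$, and the non-simultaneity of jumps), followed by Fatou and the transversality conditions. Your packaging of the Player~1 inequality as the non-negative local supermartingale $Z_t$ is only a cosmetic reformulation of the paper's chain of inequalities leading to its equation for $\E^0_{x,\varphi}[(1-\Gamma^{*,0}_{\tau\wedge\tau_n})u(X_{\tau\wedge\tau_n},\Phi^*_{\tau\wedge\tau_n})]$.
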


\begin{proof}
We start by observing that under our assumptions the stopping times $\tau_n(u^i)$, $i=0,1$, and $\tau_n(u)$ are such that $\tau_n(u^i),\tau_n(u)\to\infty$ as $n\to\infty$, $\P^0$ and $\P^1$-a.s.~(for this result we need $\Gamma^{*,i}_t<1$ for all $t\ge 0$).

{\bf Optimality of $\tau^*$}. Let $\tau\in\cT$. Denote by $\{\tau_n\}_{n=1}^\infty$ the localizing sequence
of stopping times $\tau_n=\tau_n(u)$.
If $u$ were twice continuously differentiable in the whole space, using that $\LL^0u\le 0$ on $\C^0\cap\C^1$, applying It\^o's formula and then taking expectations would give
\begin{eqnarray}\label{v00}
\E^0_{x,\varphi}\left[(1-\Gamma^{*,0}_{\tau\wedge\tau_n})u(X_{\tau\wedge\tau_n},\Phi^*_{\tau\wedge\tau_n})\right]
&\le&\,u(x,\varphi)-\E^0_{x,\varphi}\left[\int_0^{\tau\wedge\tau_n} u(X_t,\Phi^*_{t-})d\,\Gamma^{*,0,c}_t\right]\\
&&\hspace{-20mm}+\E^0_{x,\varphi}\left[\int_0^{\tau\wedge\tau_n}u_\varphi(X_t,\Phi^*_{t-})
\left(\Phi^*_{t-}d\,\Gamma^{*,0,c}_t-\Phi_{t} d\,\Gamma^{*,1,c}_t\right)\right]\notag\\
&&\hspace{-20mm}+\E^0_{x,\varphi}\Big[\sum_{t\le \tau\wedge\tau_n}\big((1-\Gamma^{*,0}_t)u(X_t,\Phi^*_t)-(1-\Gamma^{*,0}_{t-})u(X_t,\Phi^*_{t-})\big)\Big],\notag
\end{eqnarray}
where $\Gamma^{*,i,c}$ denotes the continuous part of $\Gamma^{*,i}$, $i=0,1$.
Using a mollifying argument (see, e.g., \cite[Thm.~4.1, Ch.~VIII]{FS}), the inequality \eqref{v00} can be obtained
also for $u$ with the assumed regularity,
upon noticing that $(X,\Phi^*)$ only takes values in $\overline{\C^0\cap\C^1}$ as per \eqref{SK0}.

Since $u_\varphi(x,\varphi)=u^0_\varphi(x,\varphi)+\varphi u^1_\varphi(x,\varphi)+u^1(x,\varphi)$ and recalling \eqref{SK1} we see that \eqref{smooth0} implies
\begin{align}\label{v01}
&u_\varphi(X_t,\Phi^*_{t-})\left(\Phi^*_{t-}d\,\Gamma^{*,0,c}_t- \Phi_td\,\Gamma^{*,1,c}_t\right)\\
&=u^1(X_t,\Phi^*_{t-})\Phi^*_{t-} d\,\Gamma^{*,0,c}_t-g(X_t)\Phi_t  d\,\Gamma^{*,1,c}_t.\notag
\end{align}
Then combining the integrals with respect to the continuous parts of the increasing processes one finds
\begin{align}\label{v02}
\E^0_{x,\varphi}&\Big[\int_0^{\tau\wedge\tau_n}\!\!\!u_\varphi(X_t,\Phi^*_{t-})\!
\left(\Phi^*_{t-}d\,\Gamma^{*,0,c}_t-\Phi_t d\,\Gamma^{*,1,c}_t\right)
\!-\!\int_0^{\tau\wedge\tau_n}\!\!\!u(X_t,\Phi^*_{t-})d\,\Gamma^{*,0,c}_t\Big]\\
&=-\E^0_{x,\varphi}\Big[\!\int_0^{\tau\wedge\tau_n}\!g(X_t)(d\,\Gamma^{*,0,c}_t+\Phi_td\,\Gamma^{*,1,c}_t)\Big]\notag.
\end{align}

Next, we compute the contributions from jumps and recall \eqref{SK2}.
On the event $\{\Delta\Gamma^{*,0}_t>0\}$ we have, recalling \eqref{smooth0} and \eqref{SK1},
\begin{align*}
&u^0(X_t,\Phi^*_{t})=u^0(X_t,\Phi^*_{t-})=g(X_t)\notag\\
& u^1(X_t,\Phi^*_{t})=u^1(X_t,\Phi^*_{t-}).
\end{align*}
Consequently, using \eqref{eq:phi*} and that $\Delta\Gamma^{*,1}_t=0$, we get
\begin{align}\label{jjj0}
&(1-\Gamma^{*,0}_{t})u(X_t,\Phi^*_t)-(1-\Gamma^{*,0}_{t-})u(X_t,\Phi^*_{t-})\\
&=(1-\Gamma^{*,0}_{t})\big(g(X_t)+\Phi^*_{t} u^1(X_t,\Phi^*_{t})\big)\notag\\
&\quad-(1-\Gamma^{*,0}_{t-})\big(g(X_t)+\Phi^*_{t-} u^1(X_t,\Phi^*_{t-})\big)=-\Delta \Gamma^{*,0}_t g(X_t) .\notag
\end{align}
Similarly, on the event $\{\Delta\Gamma^{*,1}_t>0\}$ we have
\begin{equation}
\label{v04}
(1-\Gamma^{*,0}_{t})\Big(u(X_t,\Phi^*_t)-u(X_t,\Phi^*_{t-})\Big)=-\Delta\Gamma^{*,1}_t\Phi_tg(X_t).
\end{equation}

By combining \eqref{v00}, \eqref{v02}, \eqref{jjj0} and \eqref{v04} we obtain
\begin{equation}\label{v04.5}
\E^0_{x,\varphi}\,\left[(1-\Gamma^{*,0}_{\tau\wedge\tau_n})u(X_{\tau\wedge\tau_n},\Phi^*_{\tau\wedge\tau_n})\right]
\le u(x,\varphi)-\E^0_{x,\varphi}\left[\int_0^{\tau\wedge\tau_n}g(X_t)(d\,\Gamma^{*,0}_t+\Phi_t d\,\Gamma^{*,1}_t)\right],
\end{equation}
where we notice that the integral with respect to the increasing processes now includes the jump part as well.
Rearranging terms and using that $u(x,\varphi)\ge (1+\varphi)f(x)$ for $(x,\varphi)\in\C^0\cap\C^1$ we get
\begin{equation}
\label{v06}
u(x,\varphi)\ge \E^0_{x,\varphi}\Big[(1-\Gamma^{*,0}_{\tau\wedge\tau_n})f(X_{\tau\wedge\tau_n})(1+\Phi^*_{\tau\wedge\tau_n})
+\int_0^{\tau\wedge\tau_n}g(X_t)(d\,\Gamma^{*,0}_t+\Phi_t d\,\Gamma^{*,1}_t)\Big].
\end{equation}
Passing to the limit as $n\to\infty$ and using Fatou's lemma gives
\[
u(x,\varphi)\ge \sup_{\tau\in\cT}\JJ_{x,\varphi}(\tau,\gamma^*_\theta).
\]

To obtain the reverse inequality we repeat the steps above with $\tau^*\wedge\tau_n$ in place of $\tau$,
where $\tau_n=\tau_n(u)$ as in \eqref{tau-n}.
In this case we can use standard It\^o's formula because $u\in C^2(\overline{\C\cap\C^0\cap\C^1})$ and $(X_{t\wedge\tau^*},\Phi^*_{t\wedge\tau^*})_{t\ge 0}$ is bound to evolve in $\overline{\C\cap\C^0\cap\C^1}$.
Then the inequality in \eqref{v00} is an equality, so \eqref{v04.5} becomes
\begin{align*}
u(x,\varphi) \!&=\E^0_{x,\varphi}\!\left[(1\!-\!\Gamma^{*,0}_{\tau^*\wedge \tau_n})u(X_{\tau^*\wedge  \tau_n},\Phi^*_{\tau^*\wedge n})\!+\!\!\int_0^{\tau^*\wedge  \tau_n}\!\!\!\!g(X_t)(d\,\Gamma^{*,0}_t\!+\!\Phi_t d\,\Gamma^{*,1}_t)\right] \\
&= \,\E^0_{x,\varphi}\left[(1-\Gamma^{*,0}_{\tau^*})f(X_{\tau^*})(1+\Phi^*_{\tau^*})\mathds{1}_{\{\tau^*\leq  \tau_n\}} +(1-\Gamma^{*,0}_{ \tau_n})   u(X_{\tau_n},\Phi^*_{ \tau_n})\mathds{1}_{\{ \tau_n<\tau^*\}} \right]\\
&\quad\qquad+\,\E^0_{x,\varphi}\left[\int_0^{\tau^*\wedge  \tau_n}g(X_t)(d\,\Gamma^{*,0}_t+\Phi_t d\,\Gamma^{*,1}_t)\right],
\end{align*}
where we have used that $u(X_{\tau^*},\Phi^*_{\tau^*})=f(X_{\tau^*})(1+\Phi^*_{\tau^*})$.
From $u(x,\varphi)=u^0(x,\varphi)+\varphi u^1(x,\varphi)$ and \eqref{tr2} we obtain
\[\lim_{n\to+\infty}\E^0_{x,\varphi}\left[(1-\Gamma^{*,0}_{\tau_n})u(X_{ \tau_n},\Phi^*_{ \tau_n})\mathds{1}_{\{ \tau_n<\tau^*\}}\right]=0,\]
upon recalling the change of measure \eqref{eq:P1P0}. So using monotone convergence we take limits as $n\to \infty$ to conclude that
\[u(x,\varphi)= \sup_{\tau\in\cT}\JJ_{x,\varphi}(\tau,\gamma^*_\theta) =\JJ_{x,\varphi}(\tau^*,\gamma^*_\theta). \]

{\bf Optimality of $\Gamma^*$}.
Pick $\Gamma\in\cA^\theta$ and note that $(X_{t\wedge\tau^*},\Phi^*_{t\wedge\tau^*})_{t\ge 0}\in \overline{\C\cap\C^0\cap\C^1}$.
Since $u^i\in C^2(\overline{\C\cap\C^0\cap\C^1})$ for $i=0,1$, we can apply standard It\^o's formula to $u^i(X,\Phi^*)$
and use that $\LL^i u^i= 0$~on $\C\cap\C^0\cap\C^1$. This gives
\begin{eqnarray}\label{v07}
\E^i_{x,\varphi}\left[(1-\Gamma^{i}_{\tau^*\wedge\tau_n})u^i(X_{\tau^*\wedge\tau_n},\Phi^*_{\tau^*\wedge\tau_n})\right]
&=&\,u^i(x,\varphi)-\E^i_{x,\varphi}\left[\int_0^{\tau^*\wedge\tau_n}u^i(X_t,\Phi^*_{t-})d\,\Gamma^{i,c}_t\right]\\
&&\hspace{-30mm} +\E^i_{x,\varphi}\left[\int_0^{\tau^*\wedge\tau_n}\frac{1-\Gamma_{t-}^i}{1-\Gamma_{t-}^{*,0}}\:\:u^i_\varphi(X_t,\Phi^*_{t-})
(\Phi^*_{t-} d\,\Gamma^{*,0,c}_t- \Phi_td\,\Gamma^{*,1,c}_t)\right]\notag\\
&&\hspace{-30mm}+\E^i_{x,\varphi}\Big[\sum_{t\le \tau^*\wedge\tau_n}\big((1-\Gamma^{i}_t)u^i(X_t,\Phi^*_t)-(1-\Gamma^{i}_{t-})u^i(X_t,\Phi^*_{t-})\big)\Big], \notag
\end{eqnarray}
where $\{\tau_n\}_{n=1}^\infty$ is the localizing sequence of stopping times $\tau_n=\tau_n(u^i)$.
Recalling that $u^i_\varphi=0$ on the support of $t\mapsto d\,\Gamma^{0,*}_t$ and $t\mapsto d\,\Gamma^{1,*}_t$ (cf.~\eqref{smooth0} and \eqref{SK1}) we immediately see that
\begin{align}\label{v05}
\E^i_{x,\varphi}\left[\int_0^{\tau^*\wedge\tau_n}
\frac{1-\Gamma_{t-}^i}{1-\Gamma_{t-}^{*,0}}\:\:u^i_\varphi(X_t,\Phi^*_{t-})
(\Phi^*_{t-} d\,\Gamma^{*,0,c}_t- \Phi_td\,\Gamma^{*,1,c}_t)\right]=0.
\end{align}
Moreover, \eqref{SK1} guarantees
\begin{align*}
u^i(X_t,\Phi^*_t)-u^i(X_t,\Phi^*_{t-})=0,\qquad\text{$\P^i_{x,\varphi}$-a.s.}
\end{align*}
so that by simply adding and subtracting $(1-\Gamma^i_{t-})u^i(X_t,\Phi^*_t)$ in the sum of jumps in \eqref{v07} we obtain
\begin{align}\label{v08}
&\E^i_{x,\varphi}\Big[\sum_{t\le \tau^*\wedge\tau_n}\big((1-\Gamma^{i}_t)u^i(X_t,\Phi^*_t)-(1-\Gamma^{i}_{t-})u^i(X_t,\Phi^*_{t-})\big)\Big]\\
&=-\E^i_{x,\varphi}\Big[\sum_{t\le \tau^*\wedge\tau_n}u^i(X_t,\Phi^*_t)\Delta \Gamma^{i}_t\Big]\notag\\
&\ge- \E^i_{x,\varphi}\Big[\sum_{t\le \tau^*\wedge\tau_n}\!g(X_t)\Delta \Gamma^{i}_t\Big],\notag
\end{align}
where the final inequality uses that $u^i\le g$ on $\C\cap\C^{1-i}$.

Next, plugging \eqref{v05} and \eqref{v08} in \eqref{v07}, and using again that $u^i\le g$ on $\C\cap\C^{1-i}$, we arrive at
\begin{align}\label{v09}
\E^i_{x,\varphi}\left[(1-\Gamma^{i}_{\tau^*\wedge\tau_n})u^i(X_{\tau^*\wedge\tau_n},\Phi^*_{\tau^*\wedge\tau_n})\right]
\ge u^i(x,\varphi)-\E^i_{x,\varphi}\left[\int_0^{\tau^*\wedge\tau_n}\!\! g(X_t)d\,\Gamma^{i}_t\right],
\end{align}
where the integral now includes both the continuous part and the jump part of the increasing process.
Using \eqref{cont0} we see that
\begin{align}\notag
u^i(x,\varphi) \leq& \,\E^i_{x,\varphi}\left[(1-\Gamma^{i}_{\tau^*})f(X_{\tau^*})\mathds 1_{\{\tau^*\leq \tau_n\}}\right]\\
&+\,\E^i_{x,\varphi}\left[(1-\Gamma^{i}_{\tau_n})u^i(X_{\tau_n},\Phi^*_{\tau_n})\mathds 1_{\{\tau_n<\tau^*\}}\right]\!\! +\E^i_{x,\varphi}\left[\int_0^{\tau^*\wedge\tau_n}\!\!g(X_t)d\,\Gamma^{i}_t\right].\notag
\end{align}
Passing to the limit as $n\to\infty$, using the transversality condition \eqref{tr2}, monotone convergence and \eqref{eq:P1P0} we obtain
\begin{align}
u^i(x,\varphi)\le \E^i_{x,\varphi}\left[(1-\Gamma^{i}_{\tau^*})f(X_{\tau^*})+\int_0^{\tau^*}g(X_t)d\,\Gamma^{i}_t\right].\notag
\end{align}
Consequently,
\[
u^i(x,\varphi)\le \inf_{\Gamma\in \cA^\theta}\mathcal J^i_{x,\varphi}(\tau^*,\gamma_\theta),\quad\text{for $i=0,1$}.
\]

The reverse inequality is obtained by taking $\Gamma=\Gamma^{*}$ in the proof above and observing that in doing so the inequalities
in \eqref{v08} and \eqref{v09} become equalities. We thus obtain
\[
u^i(x,\varphi)= \inf_{\Gamma\in \cA^\theta}\mathcal J^i_{x,\varphi}(\tau^*,\gamma_\theta)= \mathcal J^i_{x,\varphi}(\tau^*,\gamma^*_\theta),\]
for $i=0,1$, which completes the proof.
\end{proof}

\begin{figure}[htp!]\centering
 \includegraphics[width=0.75\textwidth]{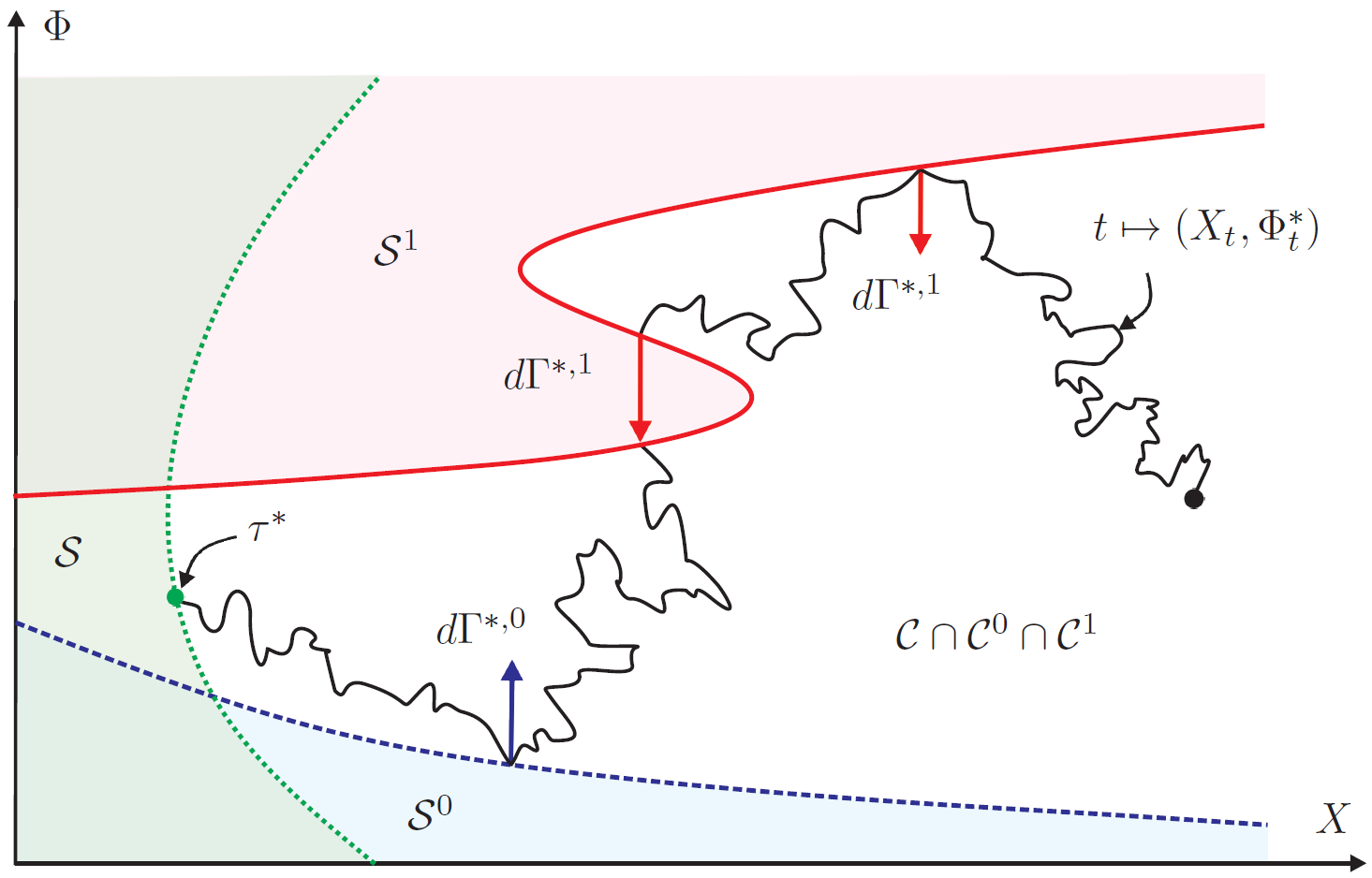}
  \caption{A schematic picture of the different regions in the $(x,\varphi)$-plane. The controls $\Gamma^{*,i}$ are constructed so that the process $(X,\Phi^*)$ reflects at $\partial(\C\cap\C^0\cap\C^1)$. Note that, depending on the geometry of $\C\cap\C^0\cap\C^1$,
the process may be discontinuous.} \label{fig:schematic}
\end{figure}

\begin{remark}
The assumption $u\in C^2(\overline{\C\cap\C^0\cap\C^1})$ is needed in the generality of the theorem because a priori the law of $(X,\Phi^*)$ may have atoms on the boundary of the domain, i.e.~on $\partial(\overline{\C\cap\C^0\cap\C^1})$. However, in practical examples where something is known about the geometry of $\C^0\cap\C^1$ one may be able to rule out the existence of such atoms and the assumption may be relaxed to $u\in C^1(\overline{\C\cap\C^0\cap\C^1})$ with bounded second derivatives.
\end{remark}

\begin{remark}
The assumption that $\P^i(\Gamma^{*,0}_t<1)=1$ and $\P^i(\Gamma^{*,1}_t<1)=1$ for all $t\ge 0$ is useful for the localisation of the stochastic integrals in the proof and to avoid that the process $\Phi^*$ reaches the endpoints of its state-space (where $u$ and $u^i$ are not properly defined). It also has a natural interpretation in those games where Player 2 does not want to fully reveal her informational advantage at any finite time but instead where a gradual release of information is optimal (as we observe in Section \ref{sec:linear}). Indeed, if for example $\P^i(\Gamma^{*,0}_t=1)>0$ for some $t\ge 0$, then full information is revealed at time $t$ for all $\omega\in\{\Gamma^{*,0}_t=1\}$.
\end{remark}

For a schematic illustration of the reflection of the process $(X,\Phi^*)$, see Figure~\ref{fig:schematic}.
We are not aware of any standard PDE results that guarantee the solvability of the quasi-variational inequality above. Nevertheless, the structure of \eqref{PDEu}--\eqref{PDEui} resembles that of quasi-variational inequalities for nonzero-sum Dynkin games (see, e.g., \cite{BF} and more recently \cite{DeAFM}), as we should expect from Proposition \ref{prop:NE} and Remark \ref{rem:nzsg}. Hence one may hope that general existence of solutions can be found following ideas from that literature.

We will show in the next section that the assumptions in Theorem \ref{prop:verifPDE} hold in an example with a linear payoff structure.

\section{An example with linear payoffs}\label{sec:linear}

In this section we study an example where the underlying diffusion is a geometric Brownian motion and the payoff functions are linear.
It is hoped that our analysis in this specific example can be used to inform future work on more general optimal stopping games and on the solvability of the quasi-variational inequality that we derived in Section~\ref{sec:verif}.

To describe the example, let
\[dX_t=\mu X_t\,dt + \sigma X_t\,dW_t,\]
where $\mu=\mu_0(1-\theta)+\mu_1\theta$ and (with a small abuse of notation) $\mu_0$ and $\mu_1$ now are constants satisfying $\mu_0<\mu_1$.
In this case, the signal-to-noise ratio $\omega=(\mu_1-\mu_0)/\sigma$ is also a constant.
Furthermore, let
\begin{align}\label{linp}
f(x)=x \quad \textrm{and} \quad g(x)=(1+\ep) x,
\end{align}
where $\ep>0$.
Given a randomised stopping pair $(\tau,\gamma_\theta)\in\cT\times\cT^\theta_R$,
the stopping game with asymmetric information has a payoff
\[R(\tau,\gamma_\theta)=X_{\tau}1_{\{\tau<\gamma_\theta\}} + (1+\ep)X_{\gamma_\theta}1_{\{\tau\geq\gamma_\theta\}} ,\]
where we also recall that under $\P^0$ we have
\begin{eqnarray*}
\left\{
\begin{array}{l}
dX_t=\mu_0 X_tdt+\sigma X_t d\WO_t\,,\\[+3pt]
d\Phi_t=\omega\Phi_t d\WO_t\,,
\end{array}
\right.
\end{eqnarray*}
and under $\P^1$ we have
\begin{eqnarray*}
\left\{
\begin{array}{l}
dX_t=\mu_1 X_tdt+\sigma X_t d\WI_t\,,\\[+3pt]
d\Phi_t=\omega^2\Phi_t dt+\omega\Phi_t d\WI_t\,.
\end{array}
\right.
\end{eqnarray*}

\begin{remark}\label{rem5.1}
Clearly, the case $\mu=\mu_0$ is advantageous for Player~2, while the case $\mu=\mu_1$ would be preferred by Player~1.
Furthermore, if $\mu_0<\mu_1<0$, then the inf-player (Player~2) would never stop, whereas if $0<\mu_0<\mu_1$ the sup-player (Player 1) would never stop.
\end{remark}

In light of the above remark, in the rest of this section we make the following standing assumption.

\begin{assumption}\label{ass:mus}
We have $\mu_0<0<\mu_1$.
\end{assumption}

One key advantage of the linear structure of our example is that we can effectively reduce the problem to one state variable, hence simplifying the rest of the analysis. In particular we will see below that $\Phi$ is the only relevant dynamic in the optimisation. For $i=0,1$ let $\PP^i$ be defined by
\begin{align}\label{Ptilde}
\frac{d\PP^i}{d\P^i}\Big|_{\mathcal F^X_t}=\exp\left\{  -\frac{\sigma^2}{2}t +\sigma W^i_t \right\}\,,
\end{align}
and notice that $\W^i_t:=-\sigma t + W^i_t$ is a $\PP^i$-Brownian motion.
For future reference we note that
\begin{align}\label{Xtilde}
dX_t=(\mu_i+\sigma^2)X_t dt+\sigma X_t d\W^i_t\,,\quad\text{under $\PP^i$}
\end{align}
and
\begin{align}
\label{Phi1} &d\Phi_t=\sigma\omega\Phi_t dt+\omega\Phi_td\W^0_t,\quad\qquad\quad\text{under $\PP^0$},\\
\label{Phi2} &d\Phi_t=(\sigma\omega+\omega^2)\Phi_t dt+\omega\Phi_td\W^1_t,\quad\text{under $\PP^1$}.
\end{align}
It is also easy to verify that $\Phi$ and $X$ are effectively linked by direct proportionality, that is
\begin{align}\label{ratio}
\varphi^{-1}\Phi_t =x^{-\omega/\sigma}\big(X_t\big)^{\omega/\sigma}e^{\big((i-1/2)\omega^2-\frac{\omega}{\sigma}(\mu_i-\tfrac{\sigma^2}{2})\big)t}\,,\quad\text{$\PP^i$-a.s.}
\end{align}

\begin{lemma}\label{lem:1D}
For $(x,\varphi)\in\R_+\times\R_+$ and $(\tau,\gamma_\theta)\in\cT\times\cT^\theta_R$ our game payoff can be rewritten as
\begin{align}\label{JJlast}
\JJ_{x,\varphi}(\tau,\gamma_\theta) =
x \Big(&\EE^0_{\varphi}\Big[ e^{\mu_0 \tau} (1-\Gamma^0_\tau)+ (1+\ep)\!\int_0^\tau \!e^{\mu_0 t} d\Gamma^0_t\Big]\\
& + \EE^0_{\varphi}\Big[e^{\mu_0 \tau} \Phi_\tau (1-\Gamma^1_\tau)  + (1+\ep)\!\int_0^\tau\! e^{\mu_0 t} \Phi_t   d\Gamma^1_t\Big]\Big).\notag
\end{align}
Moreover, we have $\JJ_{x,\varphi}(\tau,\gamma_\theta)=\cJ^0_{x,\varphi}(\tau,\Gamma^0)+\varphi\cJ^1_{x,\varphi}(\tau,\Gamma^1)$, where
\begin{align}\label{J0last}
\cJ^0_{x,\varphi}(\tau,\Gamma^0) = x\, \EE^0_{\varphi}\Big[e^{\mu_0\tau}(1-\Gamma^0_\tau) + (1+\ep)\!\int_0^\tau\! e^{\mu_0 t}d\Gamma^0_t\Big]
\end{align}
and
\begin{align}\label{J1last}
\cJ^1_{x,\varphi}(\tau,\Gamma^1) = x\,\EE^1_{\varphi}\Big[e^{\mu_1\tau}(1-\Gamma^1_\tau) + (1+\ep)\!\int_0^\tau\! e^{\mu_1 t}d\Gamma^1_t\Big].
\end{align}
\end{lemma}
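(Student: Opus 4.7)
The plan is to derive each of the three identities by applying a Girsanov-type change of measure to the formulas already established in Corollaries~\ref{game-reduced} and~\ref{informed-player}, after specialising to $f(x)=x$ and $g(x)=(1+\epsilon)x$. The key algebraic observation that drives the whole calculation is that under $\P^i$ the geometric Brownian motion admits the closed-form representation
\[
X_t = x\, e^{\mu_i t}\, Z^i_t,\qquad Z^i_t := \exp\!\bigl(-\tfrac{\sigma^2}{2} t + \sigma W^i_t\bigr),
\]
and that $Z^i$ is precisely the Radon--Nikodym density $d\PP^i/d\P^i$ on $\cF^X_t$ appearing in \eqref{Ptilde}. Hence every factor of $X_t$ or $X_\tau$ inside a $\P^i$-expectation produces a factor of $x$, an explicit exponential $e^{\mu_i t}$, and a density that converts $\E^i$ into $\EE^i$.

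To prove \eqref{J0last} and \eqref{J1last}, I would substitute the factorisation above into \eqref{J0} and \eqref{J1}. The terminal term gives
\[
\E^i\!\bigl[(1-\Gamma^i_\tau)\, x e^{\mu_i \tau} Z^i_\tau\bigr] = x\, \EE^i\!\bigl[(1-\Gamma^i_\tau) e^{\mu_i \tau}\bigr]
\]
by the abstract Bayes rule on $\cF^X_{\tau\wedge n}$ followed by passage to the limit. For the integral terms I would use Lebesgue--Stieltjes integration by parts: with $A_t:=\int_0^t e^{\mu_i s}\,d\Gamma^i_s$ (nondecreasing, $\cF^X$-adapted, and bounded on bounded intervals because $\Gamma^i_\infty\le 1$), the product rule for the continuous martingale $Z^i$ and the finite-variation process $A$ yields $d(Z^i_t A_t) = A_{t-}\,dZ^i_t + Z^i_t\,dA_t$. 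Since $A$ is bounded and $Z^i$ is in $L^2$ on bounded intervals, $\int_0^{\cdot} A_{s-}\,dZ^i_s$ is a true martingale; taking $\P^i$-expectations up to $\tau\wedge n$ and invoking Bayes gives
\[
\EE^i\!\left[\int_0^{\tau\wedge n}\! e^{\mu_i t}\,d\Gamma^i_t\right]=\E^i\!\bigl[Z^i_{\tau\wedge n}A_{\tau\wedge n}\bigr]= \E^i\!\left[\int_0^{\tau\wedge n}\! Z^i_t\, e^{\mu_i t}\,d\Gamma^i_t\right]= x^{-1}\E^i\!\left[\int_0^{\tau\wedge n}\! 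X_t\,d\Gamma^i_t\right].
\]
Letting $n\to\infty$ by monotone convergence closes \eqref{J0last} and \eqref{J1last} (with both sides allowed to take values in $[0,+\infty]$ when $\mu_1>0$).

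For \eqref{JJlast} I would apply exactly the same procedure directly to Corollary~\ref{game-reduced}, which is already written under $\P^0$, so that a single change of measure $\P^0\to\PP^0$ handles everything at once. The first summand reproduces the $\cJ^0$ computation above. In the second summand, $\Phi_\tau$ and $\Phi_t$ are $\cF^X$-adapted and are therefore absorbed into the integrand (take $H_t=\Phi_t$ in the product-rule argument, now applied to $A_t=\int_0^t e^{\mu_0 s}\Phi_s\,d\Gamma^1_s$). The decomposition $\JJ=\cJ^0+\varphi\cJ^1$ stated in the ``moreover'' is exactly \eqref{JJ2} and therefore requires no new argument.

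The hardest part of the plan is making the product-rule computation rigorous in the face of a possibly unbounded $\tau$, jumps of $\Gamma^i$ at $\tau$, and the unboundedness of $\Phi$ under $\P^0$. I would address all three issues by a standard localisation-and-monotone-convergence loop: stop at $\tau\wedge n\wedge\rho_m$ with $\rho_m:=\inf\{t\ge 0:\Phi_t\ge m\}$ so that the stochastic integral against $dZ^i$ becomes a bona fide martingale on $[0,\tau\wedge n\wedge\rho_m]$; apply the identity above; and then pass to the limit in $m$ and $n$ using the bounds $\Gamma^i_\infty\le 1$ and $e^{\mu_0 t}\le 1$ (when $\mu_0<0$), together with $\rho_m\to\infty$ a.s.
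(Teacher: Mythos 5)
Your proposal is correct and follows essentially the same route as the paper: the paper's proof is a two-line reduction observing that $X_t = x\,e^{\mu_0 t}\,\tfrac{d\PP^0}{d\P^0}\big|_{\cF^X_t}$ under $\P^0$ and then ``arguing as in the proof of Corollary~\ref{game-reduced}'', which is precisely the localisation-plus-product-rule argument you spell out for moving the density inside the Lebesgue--Stieltjes integrals. You have simply made explicit the details that the paper delegates to that earlier proof.
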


\begin{proof}
The expression in \eqref{JJlast} follows from \eqref{JJ1} and \eqref{JJ}, upon noticing that
\[
X_t= x\, e^{\mu_0 t}\frac{d\PP^0}{d\P^0}\Big|_{\cF^X_t},\qquad \P^0\textrm{-a.s.},
\]
and arguing as in the proof of Corollary \ref{game-reduced}.
Likewise, \eqref{J0last} and \eqref{J1last} follow from \eqref{J0} and \eqref{J1}.
\end{proof}

It is intuitively clear that Player 2 should never stop in the case $\mu=\mu_0$.
Note that, for any $\Gamma^0\in\cA$, integration by parts allows us to rewrite \eqref{J0last} as
\begin{align*}
\cJ^0_{x,\varphi}(\tau,\Gamma^0)=&\,x\Big(1+\EE^0_{\varphi}\Big[\mu_0\int_0^{\tau}e^{\mu_0 t}(1 -\Gamma^0_t) dt+\ep\int_0^{\tau}e^{\mu_0 t} d\,\Gamma^0_t\Big]\Big).
\end{align*}
Using that $\mu_0<0$, we immediately obtain
\[
\cJ^0_{x,\varphi}(\tau,\Gamma^0)\geq x\Big(1+\EE^0_{\varphi}\Big[\mu_0\int_0^{\tau}e^{\mu_0 t} dt\Big]\Big)=\cJ^0_{x,\varphi}(\tau,0),\]
i.e. \eqref{cond} holds. Consequently, Proposition~\ref{prop:G0} shows that it is sufficient to look for a Nash equilibrium
in the subclass of $\gamma\in\cT^\theta_R$ for which $\gamma_0=+\infty$ (equivalently, $\Gamma^0=0$).

Now we need to work out the remaining equilibrium control $\Gamma^{*,1}$ and the stopping time $\tau^*$.
We first formulate an educated guess on the structure of $\tau^*$ and $\Gamma^{*,1}$, and
subsequently we verify that using such a guess we can produce a solution of the
quasi-variational inequality from Theorem~\ref{prop:verifPDE}.

\subsection{Candidate adjusted likelihood ratio}

In the case $\mu=\mu_1$, the existence of asymmetric information creates an incentive for the informed player not to stop immediately  in order to `fool' the uninformed player. Indeed, if the uninformed player is made to believe that the drift is low (i.e., $\mu=\mu_0$),
then the uninformed player may choose to stop early, which is beneficial for the informed player as only the smaller payoff
has to be paid. Thus it is natural that Player 2 will only want to stop when $\Phi$ becomes too high (i.e., the uninformed player has a strong belief that the drift is $\mu_1$).

Including the idea of randomisation in the reasoning above, we expect that the informed player will stop at some upper threshold according to some `intensity'.
The effect of randomisation is to generate an adjusted likelihood ratio $\Phi^*$,
which can be interpreted as the belief of the uninformed player after manipulation performed by the informed one.
For Player 2 it is therefore a question of finding the optimal trade-off between manipulating Player 1's beliefs and stopping not too late.

Following the heuristics above we conjecture that Player 2 will construct $\Gamma^{*,1}$ in a way that reflects the process
$\Phi^*=\Phi(1-\Gamma^{*,1})$ at an upper threshold.
With this idea in mind, let $B\in(0,\infty)$ and an initial belief $\varphi\in(0,\infty)$ be given.
It is well known that there exists a unique pair of processes $(Y,L)$ such that $\PP^0_\varphi$-a.s.~one has
\begin{align}
\label{SDEr0}&\text{$(L)_{t\geq 0}$ is continuous and non-decreasing with $L_{0}=0$},\\
\label{SDEr1}&\text{$Y_{0-}=\varphi$, $Y_{0}=\varphi\wedge B$ and $Y_t\in(0,B]$ for $t\geq 0$,}\\
\label{SDEr}&\text{$(Y,L)$ solves}\quad\left\{\begin{array}{ll}
dY_t=\sigma\omega Y_t\,dt + \omega Y_t\,d\W^0_t -dL_t\,,\\
\int_0^t1_{\{Y_s<B\}}dL_s=0.
\end{array}\right.
\end{align}
Then $Y$ is a diffusion process with reflection at $B$.
Define the process $\Gamma^{B}\in\cA$ by $\Gamma^{B}_{0-}=0$, $\Gamma_0^{B}=\max\{0,1-B/\varphi\}$ and
\begin{align}\label{gammaB}
\Gamma^{B}_t=1-(1-\Gamma^{B}_0)e^{-L_t/B}\,,\quad\PP^0_\varphi\textrm{-a.s.}
\end{align}
Next we show that the adjusted likelihood ratio corresponding to the pair  $\Gamma=(0,\Gamma^{B})$ is given by the
reflected process $Y$.

\begin{proposition}
\label{result}
Fix $B\in(0,\infty)$, and consider the processes $(Y,L)$ and $\Gamma^{B}$ as above.
Then for any $\varphi\in(0,\infty)$ we have
\begin{align}\label{refl-Phi}
\Phi^B_t:=\Phi_t (1-\Gamma^{B}_t)=Y_t,\quad\text{for all $t\ge 0$, $\PP^0_\varphi$-a.s.}
\end{align}
\end{proposition}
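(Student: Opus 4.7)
The plan is to verify the identity by explicit computation using the log-transform. First I would record the explicit form of $\Phi$ under $\PP^0$: from \eqref{Phi1} it is a geometric Brownian motion and hence
\[
\Phi_t=\varphi\exp\Big((\sigma\omega-\tfrac{\omega^2}{2})t+\omega\W^0_t\Big),\qquad \PP^0_\varphi\text{-a.s.}
\]
Then, using the definition of $\Gamma^B$ in \eqref{gammaB}, I would simply write
\[
\Phi^B_t=\Phi_t(1-\Gamma^B_t)=\Phi_t(1-\Gamma^B_0)e^{-L_t/B}.
\]

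Next I would obtain a parallel explicit expression for $Y$. Since $Y_0=\varphi\wedge B>0$ and the diffusion coefficient is multiplicative, $Y$ stays strictly positive, so I can apply It\^o's formula to $\log Y_t$ using \eqref{SDEr}:
\[
d\log Y_t=(\sigma\omega-\tfrac{\omega^2}{2})dt+\omega\,d\W^0_t-\tfrac{1}{Y_t}dL_t.
\]
The key simplification is that, by the Skorokhod condition $\int_0^t\mathds 1_{\{Y_s<B\}}dL_s=0$, we have $Y_t=B$ on the support of $dL_t$, so $\tfrac{1}{Y_t}dL_t=\tfrac{1}{B}dL_t$. Integrating from $0$ to $t$ yields
\[
Y_t=(\varphi\wedge B)\exp\Big((\sigma\omega-\tfrac{\omega^2}{2})t+\omega\W^0_t\Big)e^{-L_t/B}.
\]

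Finally I would compare the two expressions. Since $1-\Gamma^B_0=\min\{1,B/\varphi\}=(\varphi\wedge B)/\varphi$, factoring out gives
\[
Y_t=\Phi_t\cdot\frac{\varphi\wedge B}{\varphi}\cdot e^{-L_t/B}=\Phi_t(1-\Gamma^B_0)e^{-L_t/B}=\Phi^B_t,
\]
which is the required identity, including the matching of the initial jump $\Phi^B_0=\varphi\wedge B=Y_0$.

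I do not expect a serious obstacle here: positivity of $Y$ (which legitimises the log-transform) follows from the multiplicative nature of the SDE plus the fact that reflection only pushes downward, and the cancellation $\tfrac{1}{Y_t}dL_t=\tfrac{1}{B}dL_t$ is immediate from \eqref{SDEr}. The only mild subtlety is the treatment of the initial instant, where both $\Phi^B$ and $Y$ may jump if $\varphi>B$; this is handled cleanly by the choice $\Gamma^B_0=\max\{0,1-B/\varphi\}$ so that $\Phi^B_0=\varphi\wedge B=Y_0$, after which both processes evolve continuously.
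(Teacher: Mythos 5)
Your proof is correct and follows essentially the same route as the paper: both arguments use the Skorokhod condition to replace $dL_t$ (equivalently $Y_t^{-1}dL_t$) by $B^{-1}Y_t\,dL_t$ (resp.\ $B^{-1}dL_t$) on the support of $dL$, solve for $Y$ explicitly as $(\varphi\wedge B)\exp\bigl(\omega\W^0_t+(\sigma\omega-\tfrac{\omega^2}{2})t-B^{-1}L_t\bigr)$, and compare directly with $\Phi_t(1-\Gamma^B_t)$. The only cosmetic difference is that you pass through the log-transform explicitly, whereas the paper rewrites the SDE for $Y$ in multiplicative form and quotes its explicit solution.
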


\begin{proof}
Noticing that \eqref{SDEr} implies $dL_t=\mathds{1}_{\{Y_t=B\}}dL_t$ we can write the first equation in \eqref{SDEr} as
\begin{align*}
dY_t=\sigma\omega Y_t\,dt + \omega Y_t\,d\W^0_t -B^{-1}Y_t dL_t.
\end{align*}
Recalling now \eqref{SDEr0}--\eqref{SDEr1} and thanks to the above equation we can write $Y$ explicitly under $\PP^0_\varphi$ as
\begin{align*}
Y_t=&(\varphi\wedge B) \exp\left(\omega \W^0_t+(\sigma\omega-\tfrac{\omega^2}{2})t-B^{-1}L_t\right).
\end{align*}
A direct comparison of the expression above with $\Gamma^{B}$ in \eqref{gammaB} and $\Phi$ in \eqref{Phi1} gives \eqref{refl-Phi}.
\end{proof}

Below we formulate and solve a variational problem based on the conjecture mentioned at the beginning of the section:
Player 2 will select a threshold $B\in\R_+$ and adopt the randomised stopping time generated by the couple $(0,\Gamma^{B})\in\cA\times\cA$.
Player 1 will instead choose a threshold $A\in(0,B)$ and stop at
\begin{align}\label{tauA}
\tau_A:=\inf\{t\ge 0\,:\, \Phi^B_t\le A\}.
\end{align}

\subsection{Quasi-variational inequality for the problem with linear payoff}

Here we use a constructive approach to obtain the candidate solution to the quasi-variational inequality for the game, which we will then test against the requirements of Theorem \ref{prop:verifPDE} in the next section.

As mentioned above, we look for an equilibrium with $\Gamma^{*,0}\equiv 0$.
If Player~2 plays $\Gamma=(0,\Gamma^{B})$ and Player~1 plays $\tau_A$, we obtain from \eqref{J0last}
\begin{align}\label{xV0}
\mathcal J^0_{x,\varphi}(\tau_A,\Gamma^{*,0}) =x\EE^0_\varphi\left[e^{\mu_0 \tau_A} \right]=:xV_0(\varphi).
\end{align}
The idea is that we should verify that $u^0(x,\varphi)=xV_0(\varphi)$ with $u^0$ as in Theorem~\ref{prop:verifPDE}.
This will be done in Theorem \ref{prop:final}, using facts collected in this section.

It is easy to check (see, e.g., \cite{SLG}) that $V_0$ satisfies
\begin{equation}
\label{fbp1}
\left\{\begin{array}{ll}
\frac{\omega^2\varphi^2}{2}V_0''(\varphi) + \sigma\omega \varphi V_0'(\varphi) + \mu_0 V_0(\varphi)=0, & \varphi\in(A,B)\\
V_0(\varphi)=1, & \varphi\in(0,A]\\
V_0'(B-)=0. & \end{array}\right.
\end{equation}
Notice that the condition at $B$ is the usual normal reflection condition.
Moreover, observing that $\PP^0_\varphi(\Phi_0^B=B)=1$ for $\varphi\geq B$,
it follows that $\EE^0_\varphi\left[e^{\mu_0 \tau_A} \right]=\EE^0_B\left[e^{\mu_0 \tau_A} \right]$ for $\varphi\ge B$, and therefore
\begin{align}\label{V0-B}
V'_0(\varphi)=0,\qquad\varphi\ge B.
\end{align}
Moreover, we also note that $\mu_0<0$ implies that
\begin{align}\label{V0le1}
V_0(\varphi)\le 1,\qquad\varphi\ge A.
\end{align}

Conversely, an application of It\^o's formula gives
\begin{lemma}\label{lem:V0}
Assume there exists $\bar V_0\in C^1([A,+\infty))\cap C^2([A,B])$ that solves \eqref{fbp1}--\eqref{V0-B}. Then $\bar V_0(\varphi)=\EE^0_\varphi\big[e^{\mu_0 \tau_A}\big]=V_0(\varphi)$.
\end{lemma}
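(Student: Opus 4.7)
The plan is to establish this as a standard verification lemma by applying It\^o's formula to the candidate function $\bar V_0$ evaluated along the reflected likelihood ratio $Y=\Phi^B$, noting that the three pieces of \eqref{fbp1}--\eqref{V0-B} are exactly what is needed to make the drift, the reflection local time, and the boundary value terms behave correctly. I would split the argument according to the initial value $\varphi$. The cases $\varphi\in(0,A]$ (trivially $\tau_A=0$, so both sides equal $1$) and $\varphi>B$ (where the reflected process has an initial jump down to $B$, so $V_0(\varphi)=V_0(B)$ by the strong Markov property, and $\bar V_0(\varphi)=\bar V_0(B)$ because $\bar V_0'\equiv 0$ on $[B,\infty)$ by \eqref{V0-B} together with $\bar V_0\in C^1([A,+\infty))$) are immediate, so the substance is in the case $\varphi\in(A,B]$.

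For that case, $Y_0=\varphi$ and $Y$ evolves continuously in $(A,B]$ on $[0,\tau_A)$, staying in the $C^2$-domain of $\bar V_0$. I would apply It\^o's formula to $M_t:=e^{\mu_0 t}\bar V_0(Y_t)$ on $[0,\tau_A\wedge n]$ using the dynamics \eqref{SDEr}. The $dt$-integrand is
\[
e^{\mu_0 s}\Bigl(\tfrac{\omega^2}{2}Y_s^2\bar V_0''(Y_s)+\sigma\omega Y_s\bar V_0'(Y_s)+\mu_0\bar V_0(Y_s)\Bigr),
\]
which vanishes on $(A,B)$ by the ODE and at $Y_s=B$ by continuity of the $C^2$ extension to $[A,B]$. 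The local time term equals $-e^{\mu_0 s}\bar V_0'(Y_s)\,dL_s$ and disappears since $L$ increases only when $Y_s=B$ and $\bar V_0'(B)=0$ by the normal reflection condition in \eqref{fbp1}. The remaining stochastic integral is a true martingale because $\bar V_0'$ is continuous on $[A,B]$ and $Y\le B$, so its integrand is bounded. Taking $\widetilde{\mathbb{E}}^0_\varphi$ thus gives
\[
\bar V_0(\varphi)=\widetilde{\mathbb{E}}^0_\varphi\bigl[e^{\mu_0(\tau_A\wedge n)}\bar V_0(Y_{\tau_A\wedge n})\bigr].
\]

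To pass to the limit $n\to\infty$, note that $\bar V_0$ is bounded on $[A,B]$ and $e^{\mu_0(\tau_A\wedge n)}\le 1$ (since $\mu_0<0$), so dominated convergence applies on the event $\{\tau_A<\infty\}$, while on $\{\tau_A=\infty\}$ the factor $e^{\mu_0(\tau_A\wedge n)}$ converges to $0$; either way,
\[
\bar V_0(\varphi)=\widetilde{\mathbb{E}}^0_\varphi\bigl[e^{\mu_0\tau_A}\bar V_0(Y_{\tau_A})\mathds{1}_{\{\tau_A<\infty\}}\bigr].
\]
Since $Y$ is continuous on $(0,\tau_A]$, the hitting time definition \eqref{tauA} forces $Y_{\tau_A}=A$ on $\{\tau_A<\infty\}$, and the boundary condition in \eqref{fbp1} yields $\bar V_0(A)=1$. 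This leaves $\bar V_0(\varphi)=\widetilde{\mathbb{E}}^0_\varphi[e^{\mu_0\tau_A}]=V_0(\varphi)$, as claimed. No step is really an obstacle here; the only item requiring some care is ensuring that the $dL$-term cancels cleanly, which is exactly what the reflection condition $V_0'(B-)=0$ in \eqref{fbp1} was designed to guarantee, and observing that the exponential discount with $\mu_0<0$ makes the finiteness of $\tau_A$ irrelevant for the limit.
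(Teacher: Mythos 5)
Your proof is correct and follows exactly the route the paper intends: the paper's own justification is the single remark that the lemma follows from ``an application of It\^o's formula,'' and you have simply carried out that application in detail (killing the drift term via the ODE, the local-time term via $\bar V_0'(B-)=0$, and handling the limit $n\to\infty$ using $\mu_0<0$). No discrepancy with the paper's approach.
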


Next we introduce a function $xV_1(\varphi)$ which we want to associate with $\cJ^1_{x,\varphi}(\tau_A,\Gamma^{B})$ from \eqref{J1last}. We cast a boundary-value problem for $V_1$ according to the following logic:
\begin{itemize}
\item[(i)] In the interval $(A,B)$ neither of the two players should stop, so the function $V_1$ should be harmonic for the process $\Phi^B$ with creation at rate $\mu_1$;
\item[(ii)] The informed player will only stop when the process $\Phi^B$ exceeds $B$ (although not necessarily at the first hitting time of $B$). Then we expect $V_1(B)=1+\ep$;
\item[(iii)] For the choice of $B$ to be optimal for Player 2 (given that Player~1 uses $\tau_A$), the classical smooth-fit condition should hold, that is we expect $V'_1(B-)=0$;
\item[(iv)] If the uninformed player stops first (according to $\tau_A$) then the cost for Player 2 is $V_1(A)=1$.
\end{itemize}
Combining the four items above gives us the boundary value problem
\begin{equation}
\label{fbp2}
\left\{\begin{array}{ll}
\frac{\omega^2\varphi^2}{2}V_1''(\varphi) + (\omega^2+\sigma\omega) \varphi V_1'(\varphi) + \mu_1 V_1(\varphi)=0, & \varphi\in(A,B)\\
V_1(\varphi)=1, & \varphi\in(0,A]\\
V_1(\varphi)=1+\ep, & \varphi\in[B,\infty)\\
V_1'(B)=0. & \end{array}\right.
\end{equation}

Notice that if $V_1\in C^1([A,+\infty))\cap C^2([A,B]) $ solves the above system, then it is easy to verify
\begin{align}\label{xV1}
xV_1(\varphi)=\cJ^1_{x,\varphi}(\tau_A,\Gamma^{B})
\end{align}
thanks to an application of It\^o calculus. Further we can establish monotonicity of $V_1$, which will be useful later in this section.

\begin{lemma}\label{lem:V1}
Assume that $0<A<B$ and  $V_1\in C^2([A,B])$ solves \eqref{fbp2}. Then $V_1'(\varphi)\ge 0$ for $\varphi\in[A,B]$ and $1\le V_1(\varphi)<1+\ep$ for $\varphi\in[A,B)$.
\end{lemma}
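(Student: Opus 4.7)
The plan is to derive the probabilistic representation
\[
V_1(\varphi)=\EE^1_\varphi\!\left[e^{\mu_1\tau_A}\right],\qquad \varphi\in[A,B],
\]
and then read off both claims from it. To produce the representation I would apply It\^o's formula to $M_t:=e^{\mu_1 t}V_1(\Phi^B_t)$ under $\PP^1_\varphi$, using the SDE for the reflected process $\Phi^B$ that follows from Proposition~\ref{result} and the dynamics \eqref{Phi2}, namely
\[
d\Phi^B_t=(\sigma\omega+\omega^2)\Phi^B_t\,dt+\omega\Phi^B_t\,d\W^1_t-dL_t.
\]
The finite-variation part of $M$ then consists of $e^{\mu_1 t}(\LL^1 V_1+\mu_1 V_1)(\Phi^B_t)\,dt$, which vanishes on $(A,B)$ by the ODE in \eqref{fbp2}, and of $-e^{\mu_1 t}V_1'(\Phi^B_t)\,dL_t$, which vanishes because $dL_t$ is supported on $\{\Phi^B_t=B\}$ and $V_1'(B)=0$ by smooth fit. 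Hence $(M_{t\wedge\tau_A})_{t\ge 0}$ is a local martingale. The process $\log\Phi^B$ under $\PP^1$ is a Brownian motion with positive drift reflected on $(-\infty,\log B]$, hence recurrent; combined with the boundedness of $V_1$ on $[A,B]$ and $V_1(A)=1$, optional sampling yields the claimed identity.

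Granting the representation, both conclusions follow quickly. Since $\mu_1>0$ and $\tau_A\ge 0$, we obtain $V_1(\varphi)\ge 1$. For monotonicity, fix $A\le\varphi_1<\varphi_2\le B$ and set $\rho_{\varphi_1}:=\inf\{t\ge 0:\Phi^B_t=\varphi_1\}$ under $\PP^1_{\varphi_2}$. Continuity of paths forces $\rho_{\varphi_1}\le\tau_A$ almost surely, and the strong Markov property of the reflected Markov process $\Phi^B$ gives
\[
V_1(\varphi_2)=\EE^1_{\varphi_2}\!\left[e^{\mu_1\rho_{\varphi_1}}\right]V_1(\varphi_1)\ge V_1(\varphi_1).
\]
Moreover $\rho_{\varphi_1}>0$ almost surely (since $\varphi_1<\varphi_2$ and $\Phi^B$ has continuous paths), so $\EE^1_{\varphi_2}[e^{\mu_1\rho_{\varphi_1}}]>1$ strictly, yielding strict monotonicity. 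Together with $V_1(B)=1+\ep$, this immediately gives $V_1(\varphi)<1+\ep$ for every $\varphi\in[A,B)$.

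The main obstacle is the justification of the probabilistic representation: one must show $\tau_A<\infty$ $\PP^1_\varphi$-a.s.\ (from recurrence of the drifted reflected Brownian motion) and that the local martingale above is in fact a uniformly integrable martingale up to $\tau_A$, so that the equality $V_1(\varphi)=\EE^1_\varphi[e^{\mu_1\tau_A}]$ holds without a supermartingale gap. The integrability $\EE^1_\varphi[e^{\mu_1\tau_A}]<\infty$ is not automatic (the drift of $\log\Phi^B$ pushes upward while $\mu_1>0$ amplifies the delay cost), but it follows from a Fatou-type bootstrap exploiting that $V_1$ is bounded on the compact $[A,B]$.
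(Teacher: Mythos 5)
Your route is genuinely different from the paper's. The paper argues purely at the ODE level: from the first, third and fourth equations of \eqref{fbp2} it computes $\tfrac{\omega^2B^2}{2}V_1''(B-)=-\mu_1(1+\ep)<0$, deduces $V_1'>0$ on a left neighbourhood of $B$, and then rules out $V_1'<0$ anywhere on $(A,B)$ by showing that $v_1=V_1'$ would solve a homogeneous Dirichlet problem on $(c,B)$ (with $c$ the last sign change) and hence vanish there, a contradiction; the bounds $1\le V_1<1+\ep$ then follow from the boundary data. Your probabilistic route via $V_1(\varphi)=\EE^1_\varphi[e^{\mu_1\tau_A}]$ and the strong Markov property is more conceptual and, if the representation is granted, delivers both conclusions cleanly (indeed strict monotonicity). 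The It\^o computation, the vanishing of the $dL$-term via $V_1'(B)=0$, and the recurrence of the drifted reflected process are all correct.

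The gap is in the step you yourself flag as the main obstacle, and your proposed fix does not close it. A ``Fatou-type bootstrap exploiting that $V_1$ is bounded on $[A,B]$'' is not enough: Fatou applied to $M_{t\wedge\tau_A}=e^{\mu_1(t\wedge\tau_A)}V_1(\Phi^B_{t\wedge\tau_A})$ requires a one-sided bound, i.e.\ $V_1\ge 0$ on $[A,B]$, and since the prefactor $e^{\mu_1 t}$ is unbounded, boundedness of $V_1$ alone gives no integrable minorant. Positivity of $V_1$ is not automatic here: because the zeroth-order coefficient $+\mu_1>0$ in \eqref{fbp2} has the ``wrong'' sign, the maximum principle does not exclude an interior nonpositive minimum, and in fact the exact solution $C_1\varphi^{\beta_1-1}+C_2\varphi^{\beta_2-1}$ does become negative for small $\varphi$. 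Worse, $V_1>0$ on $[A,B]$ is essentially half of the conclusion you are trying to prove, so invoking it silently is circular. It can be established independently (e.g.\ a nontrivial combination of two real power functions has at most one zero in $(0,\infty)$, and a double zero forces triviality, so positivity at both endpoints gives positivity throughout), but some such argument must be supplied. Once $V_1>0$ is known, Fatou yields $\EE^1_\varphi[e^{\mu_1\tau_A}]\le V_1(\varphi)<\infty$ and $\tau_A<\infty$ a.s., and the exact identity then needs the further step
\begin{equation*}
\bigl|\EE^1_\varphi\bigl[e^{\mu_1 t}V_1(\Phi^B_t)\mathds{1}_{\{\tau_A>t\}}\bigr]\bigr|
\le \|V_1\|_{\infty}\,\EE^1_\varphi\bigl[e^{\mu_1\tau_A}\mathds{1}_{\{\tau_A>t\}}\bigr]\longrightarrow 0,
\end{equation*}
which ``optional sampling'' does not give for free. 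With these two additions your argument is complete; as written, the chain representation $\Rightarrow$ positivity rests on positivity $\Rightarrow$ representation.
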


\begin{proof}
Let us start by observing that the first, third and fourth equations in \eqref{fbp2} imply
\[
\tfrac{\omega^2B^2}{2}V''_1(B-)=-\mu_1(1+\ep)<0.
\]
The above and $V'_1(B)=0$ imply that there exists $\lambda_0>0$ (with $B-\lambda_0\ge A$) such that
\begin{align}\label{V1prime}
V'_1(\varphi)>0\,,\qquad\text{for $\varphi\in(B-\lambda_0,B)$}.
\end{align}

With the aim of reaching a contradiction, assume that there exists $\varphi\in(A,B)$ such that $V'_1(\varphi)<0$. Then we can also define
\[
c:=\sup\{\varphi \in (A,B)\,:\,V'_1(\varphi)<0\},
\]
and clearly $c\in (A,B-\lambda_0]$. Due to continuity of $V'_1$ it must be $V'_1(c)=0$.
Since the ODE is of Euler type, its solution is a linear combination of power functions; thus
$V_1\in C^\infty(A,B)$.
Then, setting $v_1:=V'_1$ and differentiating the first equation in \eqref{fbp2} we get that $v_1$ must solve the boundary value problem
\begin{equation}
\label{bp0}
\left\{\begin{array}{ll}
\frac{\omega^2\varphi^2}{2}v_1''(\varphi) + (2\omega^2+\sigma\omega) \varphi v_1'(\varphi) + \alpha v_1(\varphi)=0, & \varphi\in(c,B),\\
v_1(c+)=v_1(B-)=0, & \end{array}\right.
\end{equation}
with $\alpha:=\omega^2+\sigma\omega+\mu_1$.
It then follows, e.g.~by the Feynman-Kac formula, that $v_1(\varphi)=0$ for $\varphi\in(c,B)$, which contradicts \eqref{V1prime}.

Then $V'_1(\varphi)\ge 0$ in $[A,B]$ as claimed. Moreover, $1\le V_1(\varphi)<1+\ep$ for $\varphi\in[A,B)$, by the second and third equation in \eqref{fbp2}.
\end{proof}

Hereafter, when referring to $V_1$ we will implicitly assume that it solves \eqref{fbp2} (we show in the next
subsection that \eqref{fbp2} and \eqref{fbp4} below can be solved simultaneously in a unique way).

Recalling \eqref{JJlast} it is now natural to associate $\JJ_{x,\varphi}(\tau_A,0,\Gamma^{1,B})$ to the function
\[
x V(\varphi):=x (V_0(\varphi)+\varphi V_1(\varphi)).
\]
Using the second equations in \eqref{fbp1} and \eqref{fbp2}, we see immediately that $V(A)=1+A$. Moreover, recalling also \eqref{JJ}, the function $\Xi(x,\varphi):=x V(\varphi)/(1+\varphi)$ should represent the equilibrium payoff for the uninformed player in the agent-form game
(notice that indeed $\Xi(x,A) = x$). Thus, by optimality, we expect that the classical smooth-fit condition holds at the boundary $A$, i.e.~$\Xi_\varphi(x,A+)=0$ or, equivalently,~$V'(A+)=1$. Hence, using \eqref{fbp1} and \eqref{fbp2} we obtain that $V$ should solve the boundary value problem
\begin{equation}
\label{fbp4}
\left\{\begin{array}{ll}
\frac{\omega^2\varphi^2}{2} V''(\varphi) + \sigma\omega \varphi  V'(\varphi) + \mu_0 V(\varphi) =0, &\varphi\in(A,B)\\
V(\varphi)=1+ \varphi, & \varphi\in(0,A]\\
V'(A+)=1,& \\
V'(B-)=1+\ep. &
\end{array}\right.
\end{equation}
Moreover,
\begin{align}\label{eq:Vp}
V'(\varphi)=1+\ep,\qquad\varphi\ge B
\end{align}
by \eqref{V0-B} and \eqref{fbp2}.
Before closing this section we provide some useful properties of $V$.

\begin{lemma}\label{lem:V}
Assume that $V\in C^2([A,B])$ solves \eqref{fbp4}. Then $V(\varphi)> 1+\varphi$ and $V'(\varphi)>1$ for $\varphi\in(A,B)$.
\end{lemma}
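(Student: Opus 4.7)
The plan is to prove the stronger statement $V'(\varphi)>1$ on $(A,B)$ first; combined with $V(A)=1+A$ (which follows by matching $V(\varphi)=1+\varphi$ for $\varphi\le A$), this immediately gives
\[
V(\varphi) = (1+A) + \int_A^\varphi V'(\eta)\,d\eta > 1+A + (\varphi-A) = 1+\varphi
\]
for $\varphi\in(A,B)$, and hence both inequalities of the lemma.

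To establish $V'>1$, the first step is to differentiate the ODE in \eqref{fbp4}. Using the identity $\sigma\omega+\mu_0=\mu_1$ (which comes from $\omega=(\mu_1-\mu_0)/\sigma$), the function $v:=V'$ satisfies the Euler-type equation
\[
\mathcal{L}_1 v := \tfrac{\omega^2\varphi^2}{2}v''(\varphi) + (\omega^2+\sigma\omega)\varphi\,v'(\varphi) + \mu_1 v(\varphi) = 0,\qquad \varphi\in(A,B),
\]
with $v(A+)=1$ and $v(B-)=1+\ep$. Its coefficients are analytic on $\{\varphi>0\}$, so $v$ is smooth on $(A,B)$.

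I would then argue by contradiction. Suppose $v(\varphi_0)\le 1$ for some $\varphi_0\in(A,B)$. Since $v(A)=1$, $v(B)=1+\ep>1$, and $v$ is continuous on the compact $[A,B]$, the minimum $m:=\min_{[A,B]}v$ is attained at some interior point $\tilde\varphi\in(A,B)$: if $m<1$ this is forced by the boundary data, while if $m=1$ then $\varphi_0$ itself is an interior minimiser. At $\tilde\varphi$ one has $v'(\tilde\varphi)=0$ and $v''(\tilde\varphi)\ge 0$, and substituting $v'(\tilde\varphi)=0$ into $\mathcal{L}_1 v=0$ yields
\[
v''(\tilde\varphi) = -\frac{2\mu_1\,v(\tilde\varphi)}{\omega^2\tilde\varphi^{\,2}}.
\]
Since $\mu_1>0$, the sign constraint $v''(\tilde\varphi)\ge 0$ then forces $v(\tilde\varphi)\le 0$.

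The remaining step is to exclude $v(\tilde\varphi)\le 0$. If $v(\tilde\varphi)=0$ then $v(\tilde\varphi)=v'(\tilde\varphi)=0$, and uniqueness for the linear ODE $\mathcal{L}_1 v=0$ (whose coefficients are regular on $\{\varphi>0\}$) forces $v\equiv 0$, contradicting $v(A)=1$. If $v(\tilde\varphi)<0$, the intermediate value theorem produces $z_1\in(A,\tilde\varphi)$ and $z_2\in(\tilde\varphi,B)$ with $v(z_1)=v(z_2)=0$ and $v<0$ on $(z_1,z_2)$; then $v$ solves the BVP $\mathcal{L}_1 v=0$ on $(z_1,z_2)$ with zero Dirichlet data, and invoking the Feynman--Kac argument used at the end of the proof of Lemma~\ref{lem:V1} (equivalently, uniqueness for this Euler-type BVP on a compact subinterval of $(0,\infty)$) forces $v\equiv 0$ on $[z_1,z_2]$, contradicting $v<0$ in the interior. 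The main obstacle is precisely this last step: because the zero-order coefficient $\mu_1$ is positive, the strong maximum principle does not apply directly to $\mathcal{L}_1$, and ruling out negative interior values of $v$ has to be carried out via the ODE uniqueness/Feynman--Kac argument of Lemma~\ref{lem:V1} transplanted to the present setting.
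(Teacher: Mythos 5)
Your proof is correct, and it reaches the key inequality $V'>1$ by a genuinely different route from the paper. Both arguments begin identically: differentiate the ODE in \eqref{fbp4} and use $\sigma\omega+\mu_0=\mu_1$ to see that $v:=V'$ solves the boundary value problem \eqref{bp} with $v(A+)=1$, $v(B-)=1+\ep$. From there the paper is purely probabilistic: it writes $v$ via It\^o/Feynman--Kac as $v(\varphi)=\EE^1_\varphi[e^{\mu_1(\rho_A\wedge\rho_B)}v(\Phi_{\rho_A\wedge\rho_B})]$ (equation \eqref{vge0}), and since the boundary data are $\geq 1$ and $\mu_1>0$ with $\rho_A\wedge\rho_B>0$ a.s., the strict inequality $v>1$ drops out in one line; the representation also exhibits $v$ explicitly as $\EE^1_\varphi[e^{\mu_1(\rho_A\wedge\rho_B)}]+\ep\,\EE^1_\varphi[e^{\mu_1\rho_B}\mathds{1}_{\{\rho_B<\rho_A\}}]$, which is more information than the lemma asks for. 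Your route is analytic: an interior-minimum argument forces $v(\tilde\varphi)\le 0$ at any interior minimiser with $m\le 1$, and you then exclude $v(\tilde\varphi)\le0$ by Cauchy--Lipschitz uniqueness (the case $v=v'=0$) and by uniqueness of the zero-Dirichlet problem on a subinterval (the case $v<0$). You correctly flag the only delicate point yourself: because the zero-order coefficient $\mu_1$ is positive, zero-Dirichlet uniqueness is not a generic maximum-principle fact (it is exactly the question of whether $\mu_1$ is a Dirichlet eigenvalue); here it does hold because the Euler equation has two distinct real indicial roots, so every nontrivial solution is of the form $c_1\varphi^{\beta_1-1}+c_2\varphi^{\beta_2-1}$ and has at most one zero in $(0,\infty)$ --- this is precisely the justification behind the Feynman--Kac step the paper itself uses at the end of Lemma~\ref{lem:V1}, so your appeal to that lemma is at the same level of rigour as the paper's own argument. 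The trade-off: the paper's proof is shorter and yields a probabilistic formula for $V'$; yours avoids stochastic representation (and hence the implicit integrability of $e^{\mu_1(\rho_A\wedge\rho_B)}$) at the cost of invoking the non-oscillatory structure of the Euler ODE. The deduction of $V>1+\varphi$ from $V'>1$ and $V(A)=1+A$ is the same in both.
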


\begin{proof}
First note that the ODE is of Euler type, so $V\in C^\infty(A,B)$. Differentiating the first equation in \eqref{fbp4} and imposing the boundary conditions for $V'$ at $A$ and $B$, we find that $v:=V'$ solves
\begin{equation}
\label{bp}
\left\{\begin{array}{ll}
\frac{\omega^2\varphi^2}{2} v''(\varphi) + (\omega^2+\sigma\omega ) \varphi  v'(\varphi) + \mu_1 v(\varphi) =0, &\varphi\in(A,B)\\
v(A+)=1,& \\
v(B-)=1+\ep. &
\end{array}\right.
\end{equation}
Recalling the dynamics of $\Phi$ under $\PP^1$ (see \eqref{Phi2}), setting
\[
\rho_A:=\inf\{t\ge0\,:\,\Phi_t\le A\}\quad\text{and}\quad\rho_B:=\inf\{t\ge0\,:\,\Phi_t\ge B\},
\]
and using It\^o's formula, we easily obtain
\begin{align}\label{vge0}
v(\varphi)=&\EE^1_\varphi\Big[e^{\mu_1(\rho_A\wedge\rho_B)}v(\Phi_{\rho_A\wedge\rho_B})\Big]\\
=&\EE^1_\varphi\Big[e^{\mu_1(\rho_A\wedge\rho_B)}\Big]+\ep\,\EE^1_\varphi\Big[e^{\mu_1 \rho_B}\mathds{1}_{\{\rho_B<\rho_A\}}\Big].\notag
\end{align}

Now both claims in the lemma follow from \eqref{vge0}, due to $\mu_1>0$ and recalling that $V(\varphi)=1+\varphi$ for $\varphi\le A$.
\end{proof}

Finally, we notice that Lemma \ref{lem:V} and the fact that $V'(A+)=1$ imply that $V''(A+)\ge 0$. Then plugging the second and third equation of \eqref{fbp4} into the first one and using $V''(A+)\ge 0$ we obtain
\[
\frac{\omega^2 A^2}{2} V''(A+) + \sigma\omega A + \mu_0 (1+A) =0 \implies \mu_1 A+\mu_0 \le 0
\]
and the next corollary holds.
\begin{corollary}\label{cor:A}
Assume that $V\in C^2([A,B])$ solves \eqref{fbp4}. Then it must be $A\le-\mu_0/\mu_1$.
\end{corollary}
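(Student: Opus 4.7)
The plan is to exploit the information that Lemma \ref{lem:V} already gives us about the shape of $V$ near the lower boundary $A$, and combine it with the ODE in \eqref{fbp4} evaluated at $A+$.

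First I would pin down the sign of $V''(A+)$. We know $V(A) = 1+A$ (from the second line of \eqref{fbp4}) and $V'(A+) = 1$ (from the third line). Moreover, Lemma \ref{lem:V} states that $V(\varphi) > 1+\varphi$ on $(A,B)$. A second-order Taylor expansion at $A$ gives
\begin{equation*}
V(\varphi) = (1+A) + (\varphi - A) + \tfrac{1}{2}V''(A+)(\varphi-A)^2 + o((\varphi-A)^2) = (1+\varphi) + \tfrac{1}{2}V''(A+)(\varphi-A)^2 + o((\varphi-A)^2),
\end{equation*}
so the strict inequality $V(\varphi) > 1+\varphi$ for $\varphi$ slightly greater than $A$ forces $V''(A+) \ge 0$.

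Next I would evaluate the ODE in \eqref{fbp4} at $\varphi = A+$. Using $V(A) = 1+A$ and $V'(A+) = 1$, it reduces to
\begin{equation*}
\tfrac{\omega^2 A^2}{2}V''(A+) + \sigma\omega A + \mu_0(1+A) = 0.
\end{equation*}
Since $V''(A+) \ge 0$ and $\omega^2 A^2/2 > 0$, this implies $\sigma\omega A + \mu_0(1+A) \le 0$. Substituting $\sigma\omega = \mu_1 - \mu_0$ (from the definition of $\omega$) collapses the left-hand side to $\mu_1 A + \mu_0$, so we get $\mu_1 A + \mu_0 \le 0$, which, dividing by $\mu_1 > 0$ (Assumption \ref{ass:mus}), yields $A \le -\mu_0/\mu_1$, as claimed.

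There is essentially no obstacle here: the one thing to verify carefully is that the Taylor argument is legitimate, which only requires $V \in C^2$ up to $A$; this is exactly what the hypothesis assumes, and is consistent with the smoothness of $V$ on $(A,B)$ already used in Lemma \ref{lem:V}. The whole argument is short because Lemma \ref{lem:V} has done the heavy lifting by ruling out convex-from-below behaviour of $V$ above the obstacle $1+\varphi$.
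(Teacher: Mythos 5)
Your proof is correct and follows essentially the same route as the paper: both arguments deduce $V''(A+)\ge 0$ from Lemma \ref{lem:V} and then evaluate the ODE in \eqref{fbp4} at $\varphi=A+$ to conclude $\mu_1 A+\mu_0\le 0$. The only (immaterial) difference is that the paper obtains $V''(A+)\ge 0$ from $V'>1$ on $(A,B)$ together with $V'(A+)=1$, whereas you obtain it from $V>1+\varphi$ via a second-order Taylor expansion; both are direct consequences of Lemma \ref{lem:V} and the boundary conditions.
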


\subsection{Solution of the variational problem and Nash equilibrium}

We now show that \eqref{fbp2} and \eqref{fbp4} can be solved simultaneously in a unique way. Note that once
the functions $V$ and $V_1$ and the boundary points $A$ and $B$ are found, the function $V_0$ is automatically determined from the relation $V_0(\varphi):=V(\varphi)-\varphi V_1(\varphi)$.

The general solution of the ODE for $V_1$ in \eqref{fbp2} is
\begin{equation}
\label{V1}
V_1(\varphi) = C_1\varphi^{\beta_1-1}+ C_2\,\varphi^{\beta_2-1},
\end{equation}
where $C_1$ and $C_2$ are constants and $\beta_1\in(0,1)$ and $\beta_2<0$ are solutions of the quadratic equation
\[\frac{1}{2}\omega^2\beta(\beta-1)+\sigma\omega\beta+\mu_0=0.\]
The third and fourth boundary conditions in \eqref{fbp2} can be used to determine $C_1$ and $C_2$ as
\begin{equation*}
C_{1}=\frac{(1-\beta_{2})(1+\epsilon)}{\beta_{1}-\beta_{2}}B^{1-\beta_{1}} \quad \textrm{and} \quad C_{2}=
\frac{(\beta_{1}-1)(1+\epsilon)}{\beta_{1}-\beta_{2}}B^{1-\beta_{2}}.
\end{equation*}
From the condition $V_1(A)=1$ and the derived expressions for $C_{1}$ and $C_{2}$, we arrive at the equation
\begin{equation}\label{eqn:AB}
  (1-\beta_{2})\left(\frac{A}{B}\right)^{\beta_{1}-1}+(\beta_{1}-1)\left(\frac{A}{B}\right)^{\beta_{2}-1}=
\frac{\beta_{1}-\beta_{2}}{1+\epsilon}.
\end{equation}

\begin{lemma}\label{prop:unique}
There exists a unique value of $A/B\in(0,1)$ satisfying \eqref{eqn:AB}.
\end{lemma}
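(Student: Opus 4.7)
The plan is to reduce the problem to a one-variable monotonicity argument. Set $r := A/B \in (0,1)$ and define
\[
F(r) := (1-\beta_2)\, r^{\beta_1-1} + (\beta_1-1)\, r^{\beta_2-1}.
\]
Our task is to show that $F(r) = (\beta_1-\beta_2)/(1+\epsilon)$ has a unique solution in $(0,1)$. Recall the key sign information: $\beta_1 \in (0,1)$ and $\beta_2 < 0$, so $1-\beta_2 > 0$, $\beta_1 - 1 < 0$, and $\beta_2 - 1 < \beta_1 - 1 < 0$.

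First I would analyse the boundary behaviour of $F$ on $(0,1]$. The function is clearly continuous. As $r \to 1^-$, $F(1) = (1-\beta_2)+(\beta_1-1) = \beta_1 - \beta_2 > 0$. As $r \to 0^+$, both power functions blow up, but since $\beta_2 - 1 < \beta_1 - 1 < 0$, the term $r^{\beta_2-1}$ dominates, and its coefficient $\beta_1 - 1$ is negative; hence $F(r) \to -\infty$. Since $\epsilon > 0$, the target value $(\beta_1-\beta_2)/(1+\epsilon)$ lies strictly between $0$ and $F(1)$, so by the intermediate value theorem a solution exists.

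For uniqueness, the main (and only) technical step is to show $F$ is strictly increasing on $(0,1)$. Differentiating and factoring,
\[
F'(r) = (\beta_1-1)(1-\beta_2)\, r^{\beta_1-2} + (\beta_1-1)(\beta_2-1)\, r^{\beta_2-2} = (\beta_1-1)\, r^{\beta_2-2}\, h(r),
\]
where
\[
h(r) := (1-\beta_2)\, r^{\beta_1-\beta_2} + (\beta_2 - 1).
\]
Since $h'(r) = (1-\beta_2)(\beta_1-\beta_2) r^{\beta_1-\beta_2-1} > 0$ and $h(1) = 0$, we have $h(r) < 0$ on $(0,1)$. Combined with $\beta_1 - 1 < 0$ and $r^{\beta_2-2} > 0$, this yields $F'(r) > 0$ on $(0,1)$, giving strict monotonicity and hence uniqueness of the root.

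I do not anticipate any serious obstacle: the whole argument rests on the sign conventions for $\beta_1,\beta_2$ inherited from the characteristic quadratic associated with the ODE in \eqref{fbp2}, and the factorisation of $F'$ reduces monotonicity to a single-term comparison at $r = 1$. The only thing worth double-checking is that the candidate boundary pair $(A,B)$ produced by this ratio together with the system \eqref{fbp2}--\eqref{fbp4} indeed satisfies $0 < A < B$ with $A \le -\mu_0/\mu_1$ from Corollary~\ref{cor:A}, but this concerns the construction of $A$ and $B$ separately rather than the uniqueness of $A/B$ claimed here.
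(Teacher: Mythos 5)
Your proof is correct and follows essentially the same route as the paper's: both arguments establish that the function $z\mapsto(1-\beta_2)z^{\beta_1-1}+(\beta_1-1)z^{\beta_2-1}$ is strictly increasing on $(0,1)$, tends to $-\infty$ at $0^+$, and exceeds the target value $(\beta_1-\beta_2)/(1+\epsilon)$ at $z=1$. The only cosmetic difference is that you factor the derivative through an auxiliary function $h$ vanishing at $1$, whereas the paper compares the two power terms $z^{\beta_1-2}$ and $z^{\beta_2-2}$ directly; the sign bookkeeping is identical.
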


\begin{proof}
Letting $h(z)=(1-\beta_{2})z^{\beta_{1}-1}+(\beta_{1}-1)z^{\beta_{2}-1}-(\beta_{1}-\beta_{2})/(1+\epsilon)$ we can see that $h^{\prime}(z)=(1-\beta_{2})(\beta_{1}-1)\left[z^{\beta_{1}-2}-z^{\beta_{2}-2}\right]> 0$ for $z\in(0,1)$ since $\beta_{1}\in(0,1)$ and $\beta_{2}<0$. Furthermore, $\lim_{z\downarrow 0}h(z)=-\infty$ and $h(1)=\epsilon(\beta_{1}-\beta_{2})/(1+\epsilon)>0$.
Hence we conclude that there is a unique root of $h(z)=0$ in $(0,1)$.
\end{proof}

Next, the general solution of the ODE for $V$ in \eqref{fbp4} is
\begin{equation}
\label{V}
V(\varphi)=D_1 \varphi^{\beta_1} + D_2 \varphi^{\beta_2}
\end{equation}
for constants $D_1$ and $D_2$.
The second and third boundary conditions in \eqref{fbp4} can be used to determine $D_1$ and $D_2$ as
\begin{equation*}
D_{1}=\frac{A^{-\beta_{1}}}{\beta_{1}-\beta_{2}}\left[-\beta_{2}+(1-\beta_{2})A\right] \quad
\textrm{and} \quad D_{2}=\frac{A^{-\beta_{2}}}{\beta_{1}-\beta_{2}}\left[\beta_{1}+(\beta_{1}-1)A\right].
\end{equation*}
From the boundary condition $V'(B-)=1+\ep$ and the derived expressions for $D_1$ and $D_2$, we arrive at the equation
\begin{align*}
  (1+\epsilon)(\beta_{1}-\beta_{2})B &= \left(A/B\right)^{-\beta_{2}}\beta_2\left[\beta_{1}+(\beta_{1}-1)A\right]\\
  &\quad -\left(A/B\right)^{-\beta_{1}}\beta_1\left[\beta_{2}+(\beta_{2}-1)A\right]. \nonumber
\end{align*}
Denoting the unique root of \eqref{eqn:AB} as $\delta=A/B\in(0,1)$ we set $A=\delta B$ to obtain
\begin{align}
\label{eqn:bound}
  (1+\epsilon)(\beta_{1}-\beta_{2})B &= \delta^{-\beta_{2}}\beta_2\left[\beta_{1}+(\beta_{1}-1)\delta B\right]\\
 \notag
&\quad -\delta^{-\beta_{1}}\beta_1\left[\beta_{2}+(\beta_{2}-1)\delta B\right].
\end{align}
The linear equation \eqref{eqn:bound} has the unique solution
\begin{equation}
\label{eqn:bound2}
  B=\frac{\beta_1\beta_2(\delta^{-\beta_2}-\delta^{-\beta_1})}{(1+\ep)(\beta_1-\beta_2)-\beta_2(\beta_1-1)\delta^{1-\beta_2}+\beta_1(\beta_2-1)\delta^{1-\beta_1}},
\end{equation}
and it is straightforward to check that $B>0$, using that $\delta\in(0,1)$ in both the numerator and denominator.

From the above we see that $A$ and $B$ are uniquely determined by \eqref{eqn:AB} and \eqref{eqn:bound2}, and the corresponding candidate values $V_1$ and $V$ are
given by \eqref{V1} and \eqref{V}, respectively. Notice that in order to define $V$ on $(0,+\infty)$ we simply extend $V$ constructed above, in a $C^1$ way, by taking
\begin{align}
\label{eq:Vext1}&V(\varphi)=V(B)+(1+\ep)(\varphi-B),&\text{for $\varphi\ge B$,}\\
\label{eq:Vext2}&V(\varphi)=1+\varphi,&\text{for $\varphi\le A$}.
\end{align}
Moreover, we extend $V_1$ to $[B,+\infty)$ in a $C^1$ way and to $(0,A]$ in a continuous way by taking
\begin{align}
\label{eq:V1ext1}&V_1(\varphi)=1+\ep,&\text{for $\varphi\ge B$},\\
\label{eq:V1ext2}&V_1(\varphi)=1,&\text{for $\varphi\le A$}.
\end{align}

\begin{theorem}\label{prop:final}
Let $A<B$ be the unique solution of \eqref{eqn:AB} and \eqref{eqn:bound2}, and let $V_1$ and $V$ be constructed as in \eqref{V1} and \eqref{V} with \eqref{eq:Vext1}--\eqref{eq:V1ext2}.
Denote $V_0(\varphi):=V(\varphi)-\varphi V_1(\varphi)$ and recall $\Phi^B$ and $\tau_A$ from \eqref{refl-Phi} and \eqref{tauA}.
Let $\Gamma^*=(0,\Gamma^B)$, let $\gamma^*_\theta$ be the
randomised stopping time generated by $\Gamma^*$, and set $\tau^*:=\tau_A$.
Then the randomised stopping pair $(\tau^*,\gamma^*_\theta)$ is a Nash equilibrium for the agent-form game with linear payoffs as in \eqref{linp} (i.e., a saddle point for the ex-ante game).
Moreover, for all $(x,\varphi)\in\R_+\times\R_+$ we have
\begin{align*}
\JJ_{x,\varphi}(\tau^*,\gamma^*_\theta)&=x V(\varphi),\\
\cJ^0_{x,\varphi}(\tau^*,0)&=x V_0(\varphi),\\
\cJ^1_{x,\varphi}(\tau^*,\Gamma^B)&=x V_1(\varphi).
\end{align*}
\end{theorem}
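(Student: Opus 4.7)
The plan is to verify every hypothesis of Theorem~\ref{prop:verifPDE} for the triple $u(x,\varphi):=xV(\varphi)$, $u^0(x,\varphi):=xV_0(\varphi)$, $u^1(x,\varphi):=xV_1(\varphi)$, with candidate equilibrium $\Gamma^*=(0,\Gamma^B)$ and $\tau^*=\tau_A$. The structural identity $u=u^0+\varphi u^1$ is built into the definition $V_0:=V-\varphi V_1$, and the equilibrium-payoff identities \eqref{tr} will then deliver the three value formulas in the statement, and with them the Nash/saddle-point conclusion.

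I would begin by identifying the regions. Lemma~\ref{lem:V} with \eqref{eq:Vext1} gives $\C=\cI\times(A,+\infty)$; the bound $V_0(\varphi)=\EE^0_\varphi[e^{\mu_0\tau_A}]\le 1<1+\ep$ yields $\C^0=\cI\times(0,+\infty)$ and hence $\cS^0=\varnothing$; Lemma~\ref{lem:V1} with \eqref{eq:V1ext1} gives $\C^1=\cI\times(0,B)$ and $\cS^1=\cI\times[B,+\infty)$. Regularity of $u,u^0,u^1$ is then automatic since each function is $C^\infty$ on $\cI\times(A,B)$ (Euler-type ODEs) and is matched in a $C^1$ manner to the constant/linear extensions on the complements via \eqref{fbp4}, \eqref{fbp2} and \eqref{V0-B}. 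The QVI reduces piecewise to previously established facts: on $\cI\times(A,B)$ the equations $\LL^0 u=0$ and $\LL^i u^i=0$ are, after factoring out $x$, the ODEs \eqref{fbp4}, \eqref{fbp1} and \eqref{fbp2}; on $\cI\times(0,A]$ one has $u=(1+\varphi)f(x)$ and a direct computation gives $\LL^0[x(1+\varphi)]=x[(\sigma\omega+\mu_0)\varphi+\mu_0]=x[\mu_1\varphi+\mu_0]\le 0$ by Corollary~\ref{cor:A}. Conditions \eqref{cont0}--\eqref{smooth0} reduce to $V_i\equiv 1$ on $(0,A]$ and $V_i$ constant on $[B,+\infty)$; the bound $u^i\le g$ is immediate from $V_0\le 1$ and Lemma~\ref{lem:V1}.

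Next I would check admissibility of $\Gamma^*$ and finiteness of $\tau^*$. Condition \eqref{SK2} is trivial since $\Gamma^{*,0}\equiv 0$; the bound $\P^i(\Gamma^{*,1}_t<1)=1$ reads off \eqref{gammaB}; and \eqref{SK0} follows from Proposition~\ref{result}, which gives $\Phi^*=\Phi^B\in(0,B]\subset\overline{\C^0\cap\C^1}$. Condition \eqref{SK1} for $i=1$ holds because $\Gamma^B$ grows only when $\Phi^B$ touches $B\in\overline{\cS^1}$, while the only (initial) jump of $\Phi^*$ goes from $\varphi$ down to $B$ inside $\cS^1$. Finiteness of $\tau_A$ under $\P$ reduces to recurrence of the reflected diffusion $\Phi^B$ on $(0,B]$ under each $\P^i$: reflection forbids escape at $B$ and the drift of $\Phi$ keeps it away from $0$. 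For the transversality conditions \eqref{tr2} I would use $u^i(X_{\tau_n},\Phi^*_{\tau_n})\le(1+\ep)X_{\tau_n}$, the a.s.\ divergence $\tau_n\to\infty$ noted at the start of the proof of Theorem~\ref{prop:verifPDE}, together with finiteness of $\tau^*$, so that $\mathds{1}_{\{\tau^*>\tau_n\}}\to 0$ $\P^i$-a.s., and then conclude by dominated convergence combined with standard moment bounds for the geometric Brownian motion $X$ under $\P^i$.

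The main obstacle is the case $\varphi>B$, where $\Gamma^B$ carries an initial jump $\Delta\Gamma^B_0=1-B/\varphi$ that moves $\Phi^*$ from $\varphi$ down to $B$. Read strictly, condition \eqref{SK1} for $i=0$ fails at $t=0$ because $\cS^0=\varnothing$. Inspecting the proof of Theorem~\ref{prop:verifPDE}, however, \eqref{SK1} is invoked only to secure the identity $u^i(X_t,\Phi^*_t)-u^i(X_t,\Phi^*_{t-})=0$ at jump times, and this identity persists in our setting because both $V_0$ and $V_1$ are constant on $[B,+\infty)$ by \eqref{eq:Vext1} and \eqref{eq:V1ext1}, so the jump $\varphi\to B$ leaves $u^0$ and $u^1$ unchanged. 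A conceptually cleaner alternative, if one prefers to apply Theorem~\ref{prop:verifPDE} without re-examining its proof, is to first establish the conclusion for $\varphi\in(0,B]$ (where $\Gamma^B$ is continuous and every hypothesis holds verbatim) and then extend to $\varphi>B$ by a one-step reduction at $t=0$: either the informed player stops immediately with probability $1-B/\varphi$ or she continues and finds the system at $(x,B)$, which directly reproduces the extended value $V(\varphi)=V(B)+(1+\ep)(\varphi-B)$ consistent with \eqref{eq:Vext1}.
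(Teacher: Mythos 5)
Your overall route is the same as the paper's: verify the hypotheses of Theorem~\ref{prop:verifPDE} for $u=xV$, $u^i=xV_i$, with the same identification of the regions $\C$, $\C^0=\cI\times(0,\infty)$, $\C^1$, the same reduction of the QVI to the ODE systems, and the same use of Corollary~\ref{cor:A} to get $\LL^0[x(1+\varphi)]\le 0$ on $\cS$. Your discussion of the initial jump when $\varphi>B$ is in fact more careful than the paper, which simply asserts that \eqref{SK2}--\eqref{SK1} hold ``by construction''; you are right that the second part of \eqref{SK1} for $i=0$ fails literally at $t=0$ when $\cS^0=\varnothing$, and both of your fixes (constancy of $V_0$, $V_1$ on $[B,\infty)$ makes the jump invisible to $u^0,u^1$ in the proof of the verification theorem; or a one-step reduction to $\varphi\le B$) are sound.

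The one genuine gap is the transversality condition \eqref{tr2} for $i=1$. You propose to conclude from $u^1(X_{\tau_n},\Phi^*_{\tau_n})\le(1+\ep)X_{\tau_n}$, $\mathds{1}_{\{\tau^*>\tau_n\}}\to 0$ a.s., and ``dominated convergence combined with standard moment bounds for the geometric Brownian motion $X$ under $\P^i$''. For $i=0$ this works (indeed $\E^0[X_{\tau_n}\mathds{1}_{\{\tau_A>\tau_n\}}]=x\,\EE^0[e^{\mu_0\tau_n}\mathds{1}_{\{\tau_A>\tau_n\}}]$ and $e^{\mu_0\tau_n}\le 1$ since $\mu_0<0$). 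For $i=1$, however, $\mu_1>0$ and the same change of measure gives $\E^1[X_{\tau_n}\mathds{1}_{\{\tau_A>\tau_n\}}]=x\,\EE^1[e^{\mu_1\tau_n}\mathds{1}_{\{\tau_A>\tau_n\}}]$, for which the only natural dominating variable is $e^{\mu_1\tau_A}$; its integrability is \emph{not} a standard moment bound for $X$ but an exponential moment of the hitting time $\tau_A$ of the reflected diffusion, which is finite only because $\mu_1$ lies below the relevant principal eigenvalue. The paper supplies exactly this missing ingredient: by \eqref{fbp2} and It\^o's formula, $Z_t:=e^{\mu_1(t\wedge\tau_A)}V_1(\Phi^B_{t\wedge\tau_A})$ is a $\PP^1_\varphi$-martingale, whence Fatou's lemma gives $\EE^1_\varphi[e^{\mu_1\tau_A}]\le V_1(\varphi)<\infty$ (and, as a by-product, $\PP^1(\tau_A<\infty)=1$), after which dominated convergence applies. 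Without an argument of this type your verification of \eqref{tr2} for $i=1$ does not close; everything else in your proposal matches the paper's proof.
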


\begin{proof}
The proof relies on showing that $u(x,\varphi):=xV(\varphi)$ and $u^i(x,\varphi):=x V_i(\varphi)$, $i=0,1$,
fulfill all conditions in Theorem~\ref{prop:verifPDE}.

Let us start by setting, for $i=0,1$,
\[\C:=\{(x,\varphi)\,:\,u(x,\varphi)>(1+\varphi) x\}\quad\text{and}\quad\C^i:=\{(x,\varphi)\,:\,u^i(x,\varphi)<(1+\ep) x\}\]
and $\cS:=\R_+^2\setminus\C$, $\cS^i:=\R^2_+\setminus\C^i$.
From Lemma \ref{lem:V1} and the second equation in \eqref{fbp2} we obtain $\C^1=\R_+\times(0,B)$ and $\cS^1=\R_+\times[B,+\infty)$. Similarly, from \eqref{eq:Vp} and Lemma \ref{lem:V} we get $\C=\R_+\times(A,+\infty)$ and $\cS=\R_+\times(0,A]$.

Since $V$ and $V_1$ solve \eqref{fbp4}--\eqref{eq:Vp} and \eqref{fbp2}, respectively, it is immediate to check that $V_0$ solves \eqref{fbp1}--\eqref{V0-B}. Moreover, Lemma \ref{lem:V0} guarantees that $V_0$ also satisfies \eqref{xV0}. Then \eqref{V0le1} holds as well, implying $\C^0=\R_+^2$ and $\cS^0=\varnothing$.

Now that $\C$, $\C^i$, $\cS$, $\cS^i$ are specified, it is easy to check that on $\cS$ we have
\begin{align}\label{eq:vL}
\LL^0 u(x,\varphi)=\LL^0[x(1+\varphi)]=x(\mu_0+\mu_1\varphi)\le 0,
\end{align}
where the last inequality follows from Corollary~\ref{cor:A} (recall that $\LL^i$ is the infinitesimal generator of $(X,\Phi)$ under the measure $\P^i$). Therefore, \eqref{eq:vL} and \eqref{fbp4} imply \eqref{PDEu}. Moreover,
\eqref{fbp1} and \eqref{fbp2} imply \eqref{PDEui}. Furthermore, the second equations in \eqref{fbp1} and in \eqref{fbp2} imply \eqref{cont0}, and \eqref{V0-B} and the third equation of \eqref{fbp2} imply \eqref{smooth0}. Finally, $u^i(x,\varphi)\le x(1+\epsilon)$ for $i=0,1$ by \eqref{V0le1} and Lemma \ref{lem:V1}.

It is clear that $(X_t,\Phi^B_t)_{t\ge 0}$ meets conditions \eqref{SK2}--\eqref{SK1} by construction
since all probability measures we consider are equivalent on $\cF_t$, for each $t<\infty$.
Moreover, $\P^i(\tau_A<\infty)=1$ since $\tau_A$ is the first hitting time of a constant level for a reflected diffusion,
so it follows that $\P(\tau_A<\infty)=(1-\pi)\P^0(\tau_A<\infty)+\pi\P^1(\tau_A<\infty)=1$.

It only remains to check the transversality condition \eqref{tr2}. First we notice that $\tau_n(u)$ and $\tau_n(u^i)$, $i=0,1$, defined as in \eqref{tau-n} converge to infinity as $n\to\infty$ under $\P^i$ and $\PP^i$, $i=0,1$, thanks to the regularity of $u$ and $u^i$. Using that $\Phi^B_t\in[0,B]$ for all $t\ge 0$, $\P^0_{x,\varphi}$-a.s.~and that $V_0$ is bounded by one, we obtain
\begin{align*}
0\le&\,\lim_{n\to+\infty}\E^0_{x,\varphi}\Big[X_{\tau_n}V_0(\Phi^B_{\tau_n})\mathds{1}_{\{\tau_A>\tau_n\}}\Big]
\le \lim_{n\to+\infty}\E^0_{x,\varphi}\Big[X_{\tau_n}\mathds{1}_{\{\tau_A>\tau_n\}}\Big]=0
\end{align*}
since the $\P^0$-geometric Brownian motion $\{X_t, t\geq 0\}$ is uniformly integrable.
Thus \eqref{tr2} holds for $i=0$.

To prove \eqref{tr2} for $i=1$ we see that it follows from \eqref{fbp2} and an application of Ito's formula that
$Z_t:=e^{\mu_1(t\wedge\tau_A)}V_1(\Phi^B_{t\wedge\tau_A})$ is a
$\PP^1_{\varphi}$-martingale. By Fatou's lemma,
\[\EE^1_\varphi\Big[e^{\mu_1 \tau_A}\Big]\leq \lim_{t\to\infty}\EE^1_\varphi\Big[e^{\mu_1 (t\wedge\tau_A)}
V_1(\Phi^B_{t\wedge\tau_A})\Big]
\leq V_1(\varphi)<\infty,\]
which also implies $\PP^1(\tau_A<+\infty)=1$, as needed below.
Finally, we have
\begin{align*}
0\le&\,\lim_{n\to+\infty}\E^1_{x,\varphi}\Big[X_{\tau_n}V_1(\Phi^B_{\tau_n})\mathds{1}_{\{\tau_A>\tau_n\}}\Big]\\
\leq&\, (1+\ep) \lim_{n\to+\infty}\lim_{t\to+\infty} \E^1_{x,\varphi}\Big[X_{\tau_n}\mathds{1}_{\{\tau_A\wedge t>\tau_n\}}\Big]\\
=&\, x(1+\ep) \lim_{n\to+\infty}\lim_{t\to+\infty}\EE^1_{\varphi}\Big[e^{\mu_1 \tau_n} \mathds{1}_{\{\tau_A\wedge t>\tau_n\}}\Big]\\
\le &\,x(1+\ep)  \lim_{n\to+\infty}\EE^1_{\varphi}\big[ e^{\mu_1 \tau_n}\mathds{1}_{\{\tau_A>\tau_n\}}  \big]=0,
\end{align*}
where we used that $\Phi^B_t\in[0,B]$ for all $t\ge 0$, $\PP^1_{\varphi}$-a.s.~and
that $V_1$ is bounded by $1+\ep$ on $(0,B]$, and the last equality is due to dominated convergence.
\end{proof}

Recalling that the {\em ex-ante} value in our problem is $xV(\varphi)$ and $x\ge 0$, one final observation concerns its convexity with respect to $\varphi$ (since $V'\ge 0$, this is also equivalent to convexity with respect to $\pi$). The result is consistent with the existing literature on games with asymmetric information, going back to \cite{AM}, and it is in line with the most recent results in, e.g., \cite{C}, \cite{CR},  \cite{Gens}, \cite{GG} and \cite{G}.
\begin{proposition}\label{prop:convex}
The map $\varphi\mapsto V(\varphi)$ is convex on $[0,\infty)$.
\end{proposition}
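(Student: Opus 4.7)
The plan is to prove convexity piecewise on the three intervals $(0,A]$, $[A,B]$ and $[B,\infty)$, and then glue the pieces using the $C^1$-matching of $V$ at $A$ and $B$. By the extensions in \eqref{eq:Vext1}--\eqref{eq:Vext2}, $V$ is affine on $(0,A]$ and on $[B,\infty)$, and hence convex on each of these intervals. The boundary conditions in \eqref{fbp4} moreover show that the one-sided derivatives of $V$ agree at $A$ (both equal $1$) and at $B$ (both equal $1+\epsilon$), so $V\in C^1([0,\infty))$; global convexity will then follow from convexity on each piece.

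The core of the argument is the identity $V'(\varphi)=V_1(\varphi)$ for $\varphi\in[A,B]$. To establish it, I would differentiate the linear ODE for $V$ in \eqref{fbp4} and use the identity $\sigma\omega+\mu_0=\mu_1$ (which follows from $\omega=(\mu_1-\mu_0)/\sigma$) to verify that $w:=V'$ solves
\[\frac{\omega^2\varphi^2}{2}w''(\varphi)+(\omega^2+\sigma\omega)\varphi w'(\varphi)+\mu_1 w(\varphi)=0,\qquad\varphi\in(A,B),\]
which is exactly the ODE satisfied by $V_1$ in \eqref{fbp2}. The Dirichlet data also coincide: $w(A+)=V'(A+)=1=V_1(A)$ and $w(B-)=V'(B-)=1+\epsilon=V_1(B)$. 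Since the two fundamental solutions $\varphi^{\beta_1-1}$ and $\varphi^{\beta_2-1}$ of this Euler-type ODE are linearly independent on $(A,B)$ (their Wronskian does not vanish as $\beta_1\neq\beta_2$), the Dirichlet boundary value problem admits a unique classical solution, so $V'\equiv V_1$ on $[A,B]$.

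Given this identity, $V''(\varphi)=V_1'(\varphi)\geq 0$ on $[A,B]$ by Lemma \ref{lem:V1}, which yields convexity of $V$ on $[A,B]$. Combined with affinity on the other two pieces and the $C^1$-matching at $A$ and $B$, $V$ is convex on $[0,\infty)$. The step requiring the most care is the derivation of the common ODE for $V'$ and $V_1$, together with the uniqueness of the associated Dirichlet problem; once that identification is in place, the monotonicity of $V_1$ already proved in Lemma \ref{lem:V1} translates directly into convexity of $V$.
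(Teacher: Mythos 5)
Your proof is correct but follows a genuinely different route from the paper's. The paper argues directly on $V''$: differentiating the equation in \eqref{bp} once more, it applies the maximum principle to show that $V''$ cannot vanish at two interior points without vanishing identically in between, so $V'$ changes monotonicity at most once; it then invokes the \emph{uniqueness} of the pair $(A,B)$ solving \eqref{eqn:AB} and \eqref{eqn:bound2} to rule out an interior maximum of $V'$ exceeding $1+\ep$, forcing $V'$ to be non-decreasing on $(A,B)$. You instead identify $V'\equiv V_1$ on $[A,B]$ --- both solve the Euler equation $\tfrac{\omega^2\varphi^2}{2}w''+(\omega^2+\sigma\omega)\varphi w'+\mu_1 w=0$ (the paper's own \eqref{bp} confirms this for $V'$, via $\sigma\omega+\mu_0=\mu_1$) with the same Dirichlet data $w(A)=1$, $w(B)=1+\ep$ --- and then import $V_1'\ge 0$ from Lemma \ref{lem:V1}. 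This is arguably more economical and makes explicit a structural identity ($V'=V_1$, equivalently $V_0'=-\varphi V_1'$) that the paper never states; the price is that the analytic work is now hidden inside Lemma \ref{lem:V1}, whose proof is itself a maximum-principle argument of the same flavour as the one the paper uses here. One point of rigor needs repair: non-vanishing of the Wronskian (linear independence of the fundamental solutions) does \emph{not} in general imply unique solvability of a two-point Dirichlet problem --- $w''+w=0$ on $(0,\pi)$ is the standard counterexample. Uniqueness does hold in your setting, but for a different reason: every solution has the form $C_1\varphi^{\beta_1-1}+C_2\varphi^{\beta_2-1}$ with $\beta_1\neq\beta_2$ real, and a nontrivial such combination vanishing at two distinct points $0<A<B$ would force $A^{\beta_1-\beta_2}=B^{\beta_1-\beta_2}$, which is impossible by strict monotonicity of $\varphi\mapsto\varphi^{\beta_1-\beta_2}$. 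With that one sentence corrected, your argument stands.
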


\begin{proof}
The result could be derived by the explicit expression for $V$ but it would require checking the sign of all the constants involved. We follow an alternative approach that exploits the uniqueness of the couple $(A,B)$ solving \eqref{eqn:AB} and \eqref{eqn:bound2}.

First, we observe that if there exist $A\le \varphi_1<\varphi_2\le B$ such that $V''(\varphi_1)=V''(\varphi_2)=0$, then $V''=0$ on the interval $(\varphi_1,\varphi_2)$. This can be easily deduced by the maximum principle, upon noticing that $V''=:\hat v$ solves
\[
\tfrac{\omega^2\varphi^2}{2} \hat v''(\varphi)+(2\omega^2+\sigma\omega)\varphi \hat v'(\varphi)+(\omega^2+\sigma\omega+\mu_1)\hat v(\varphi)=0
\]
on $(\varphi_1,\varphi_{2})$ by direct differentiation of \eqref{bp}. Since $1=V'(A+)<V'(B-)=1+\epsilon$ and $V'>1$ on $(A,B)$ we conclude that $\varphi\mapsto V'(\varphi)$ may change monotonicity at most once. In particular, it is non-decreasing up to its global maximum, i.e., for all $\varphi\in(A,\varphi_0]$ with
\[\varphi_0:=\inf\{\varphi\in[A,B]: V'(\varphi_0)=\max_{\varphi\in[A,B]}V'(\varphi)\}.\]
If  $V'(\varphi_0)>1+\ep$, then
there exists $B_0\in(A,\varphi_0]$ such that $V'(B_0)=1+\epsilon$ and the couple $(A,B_0)$ is also a solution of \eqref{eqn:AB} and \eqref{eqn:bound2}, by the same construction used to arrive to those equations. This contradicts uniqueness of the solution.
Therefore, $V'(\varphi_0)=1+\ep$ and $\varphi_0=B$. Then $V'$ is monotone and $V$ is convex.
\end{proof}

\section{Numerical Results}\label{sec:numeric}

In this section we illustrate the value of the ex-ante game and the Nash equilibrium in the agent-form game found in Section~\ref{sec:linear}. We consider a base-case set of parameters with $\mu_0=-1$, $\mu_1=1$, $\sigma=0.5$ and $\epsilon=0.1$.
For these parameters, the boundaries defined by \eqref{eqn:AB} and \eqref{eqn:bound2} are found to be $A=0.329$ and $B=0.868$.
For ease of interpretation, however, in the following we will return to the posterior probability process $\Pi^*$, where we denote the lower boundary as $a:=A/(1+A)$ and the upper (reflecting) boundary as $b:=B/(1+B)$.
For our base case this corresponds to $a=0.248$ and $b=0.465$. Furthermore, we let $x=1$ and note that
$u(1,\varphi)=V(\varphi)$ and $u^i(1,\varphi)=V_i(\varphi)$; accordingly, we refer to $V$ and $V_i$ as value functions.  Finally, since $\mu=\mu_01_{\{\theta=0\}}+\mu_1 1_{\{\theta=1\}}$ we will stop referring to $\theta$ and use directly the events $\{\mu=\mu_0\}$ and $\{\mu=\mu_1\}$, which have a clearer intuitive meaning.

Firstly, Figure~\ref{fig:sim} demonstrates a typical sample path of the $\Pi^*$-process and its associated $\Gamma^{*,1}$-process.
Note that in this particular example, Player~1 stops at $\tau^*\approx 0.06$, and that $\Gamma^{*,1}_{\tau^*}\approx 0.13$. Consequently, Player~1 stops before Player~2 if either $\mu=\mu_0$ or if $\mu=\mu_1$ and
the uniformly distributed randomisation device $\mathcal U$ takes a value larger than 0.13.

\begin{figure}[htp!]\centering
  \psfrag{t}{$t$}
  \psfrag{pi}{$\Pi^*$}
  \psfrag{Gamma}{$\Gamma^{*,1}$}
  \subfigure[]{\includegraphics[width=0.45\textwidth]{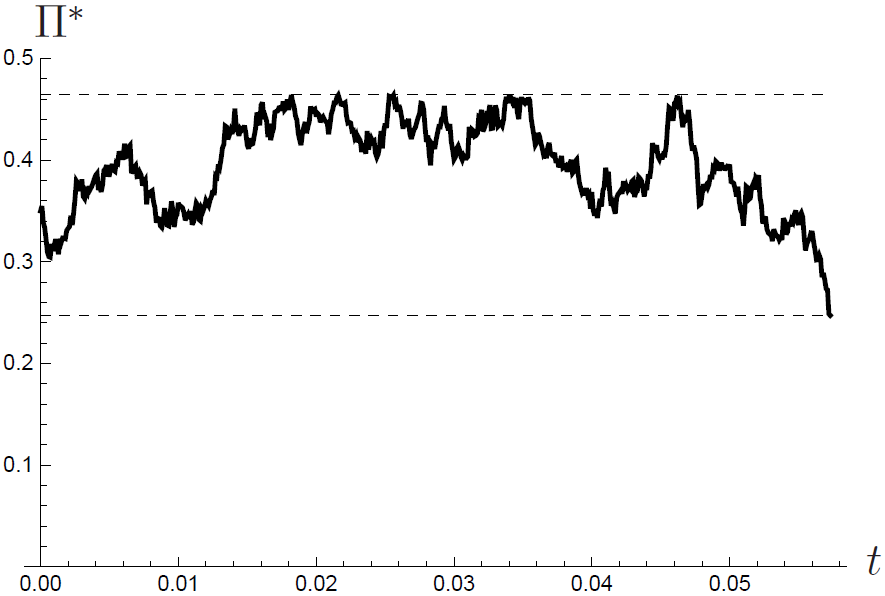}\label{fig:sim.pi}}
  \subfigure[]{\includegraphics[width=0.45\textwidth]{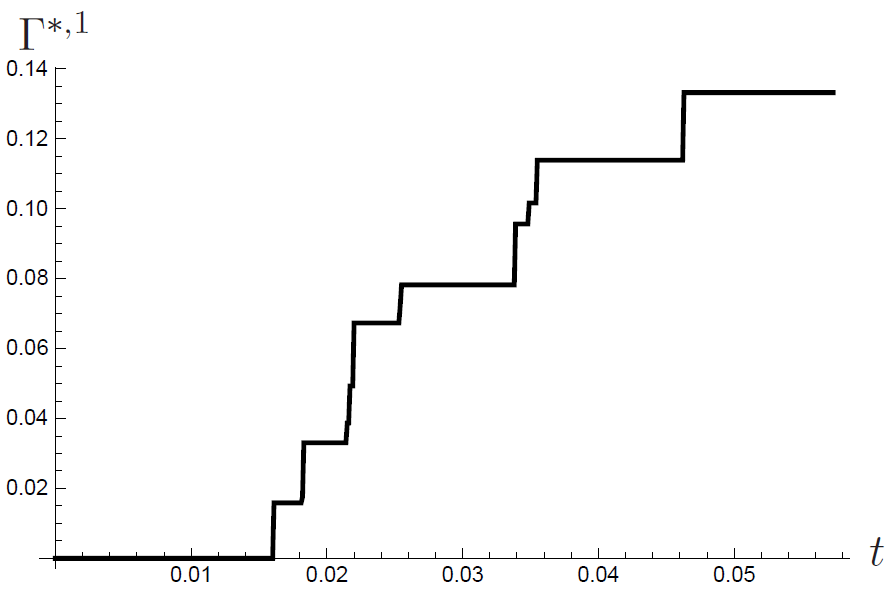}\label{fig:sim.gambpi}}
  \caption{A typical sample path of the $\Pi^*$-process (left) and its associated $\Gamma^{*,1}$-process (right) for our base-case parameters. Note that the dashed lines on the left represent the optimal boundaries $a=0.248$ and $b=0.465$ and that we have chosen $\pi=0.35$.} \label{fig:sim}
\end{figure}

Next, Figure~\ref{fig:value} shows the value functions for Player~1 and Player~2 corresponding to our base case.
Note that $V$ and $V_1$ satisfy smooth fit conditions at $a$ and $b$ respectively, and $V_0$ satisfies the reflection condition at $b$.
We also observe the properties of $V_0$ described in \eqref{V0-B} and \eqref{V0le1}, along with the properties of $V_1$ and $V$ described in Lemmas 5.5 and 5.6, respectively.
Convexity of the ex-ante value $V$, as described in Proposition \ref{prop:convex}, is also observed.
When the true drift is $\mu_1$, the informed player expects to pay out considerably more than Player~1 has reason to believe, and when the true drift is $\mu_0$, the informed player expects to pay out less.
When $\mu=\mu_1$, the gap between $1+\epsilon$ and the value of the game to Player~2 can be seen to represent the reduction in Player~2's expected cost due to Player~1 being uninformed.
Similarly, when $\mu=\mu_0$, the gap between 1 and the value of the game to Player~2 represents the reduction in Player~2's expected cost due to Player~1 being uninformed.

\begin{figure}[htp!]\centering
  \psfrag{V}{Value}
  \psfrag{pi}{$\pi$}
  \includegraphics[width=0.7\textwidth]{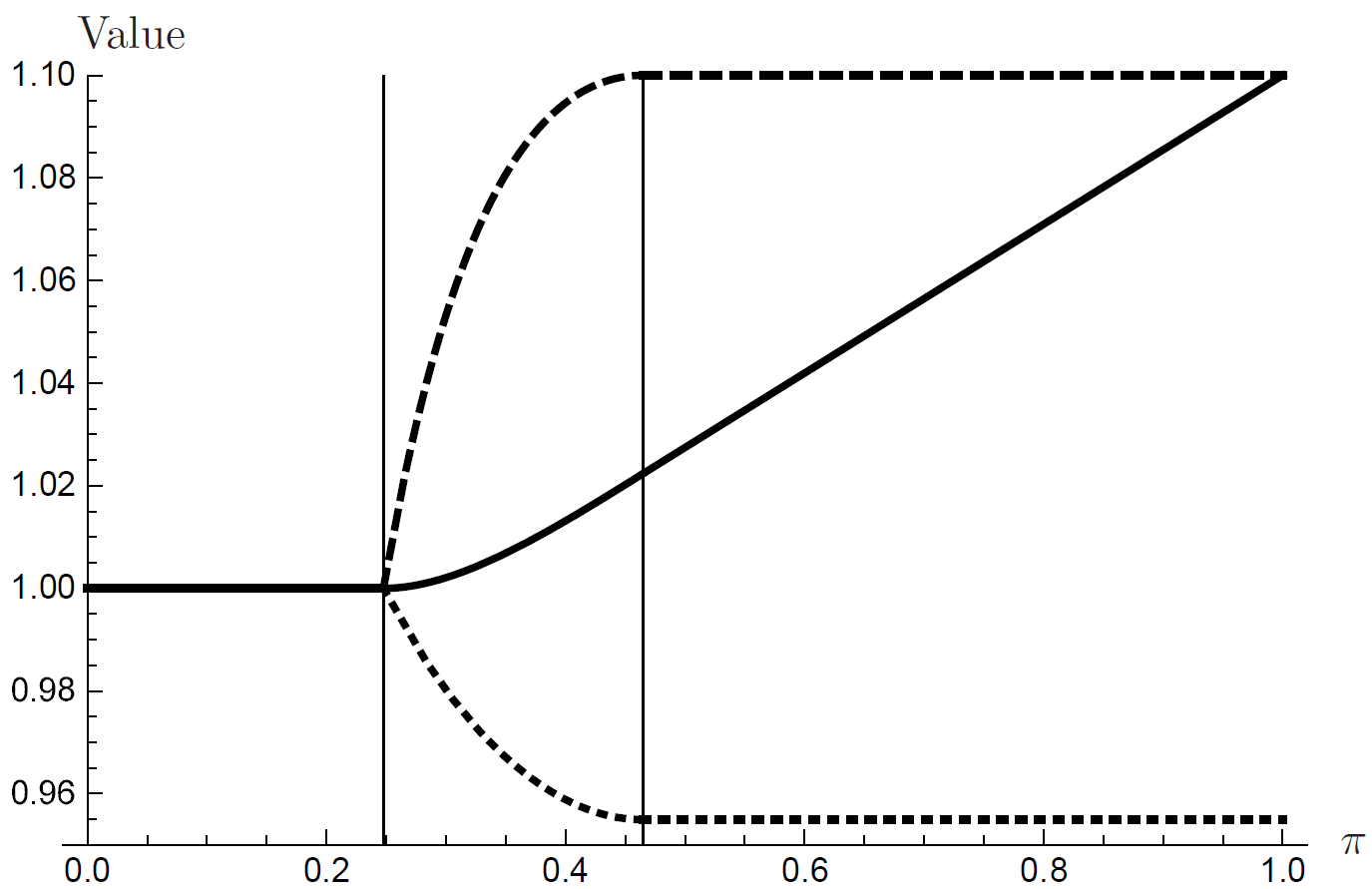}
  \caption{The value of the game to Player 1 (solid line; $(1-\pi)V_0+\pi V_1$) along with the value of the game to Player~2 when $\mu=\mu_0$ (dotted line; $V_0$) and $\mu=\mu_1$ (dashed line; $V_1$). The base-case parameters are $\mu_0=-1$, $\mu_1=1$, $\sigma=0.5$ and $\epsilon=0.1$; therefore $a=0.248$ and $b=0.465$ (represented by the two vertical lines).}\label{fig:value}
\end{figure}

Figure~\ref{fig:ab} shows comparative static results for the changing of all four parameters $(\mu_0,\mu_1,\sigma,\epsilon)$ with the base case used above.
We first note that the signal-to-noise ratio, $\omega=(\mu_1-\mu_0)/\sigma$, plays a crucial role in understanding these results since a higher $\omega$ will result in faster learning by the uninformed player.
In this sense, changes in the parameters $\mu_0$, $\mu_1$ and $\sigma$ will affect the signal-to-noise ratio and hence the speed of learning, which will ultimately have an impact on the equilibrium outcome.
Furthermore, changing $\mu_0$, $\mu_1$ and $\sigma$ will not only have an effect on the speed of learning (through the signal-to-noise ratio) but also
on the expected payoff of the game, potentially resulting in non-monotone dependencies due to these competing effects.
Finally, we note that $\epsilon$ only influences the problem through the payoff structure of the game and has no impact on the rate at which Player 1 is able to learn about the drift.
With this understanding in mind, we now proceed to describe the comparative statics results observed in Figure~\ref{fig:ab}.

\begin{figure}[htp!]\centering
  \psfrag{delta}{$\delta$}
  \psfrag{mu0}{$\mu_0$}
  \psfrag{mu1}{$\mu_1$}
  \psfrag{sigma}{$\sigma$}
  \psfrag{epsilon}{$\epsilon$}
  \psfrag{ab}{}
  \subfigure[]{\includegraphics[width=0.45\textwidth]{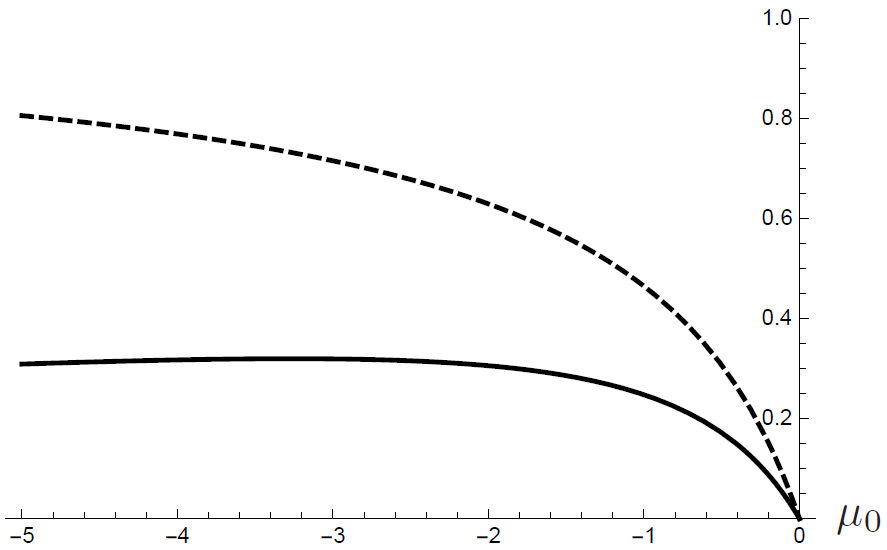}\label{fig:ab.mu0}}
  \subfigure[]{\includegraphics[width=0.45\textwidth]{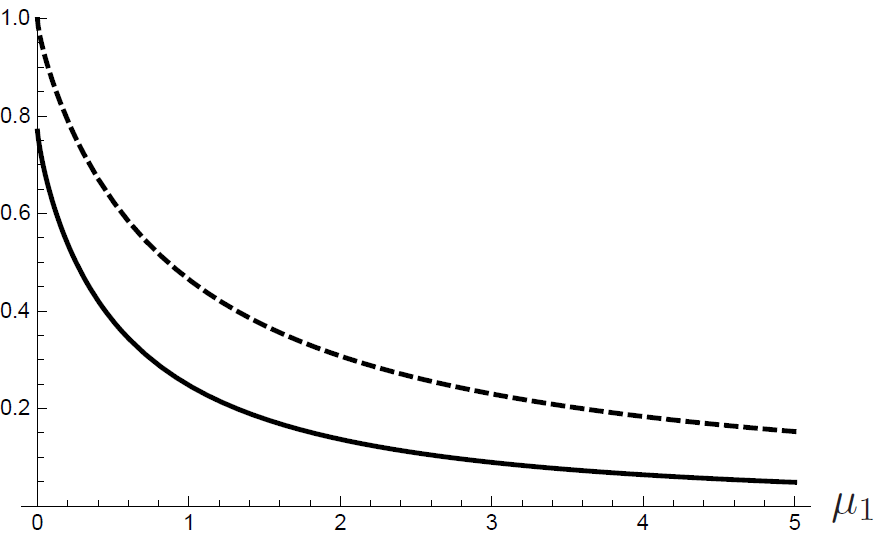}\label{fig:ab.mu1}}
  \subfigure[]{\includegraphics[width=0.45\textwidth]{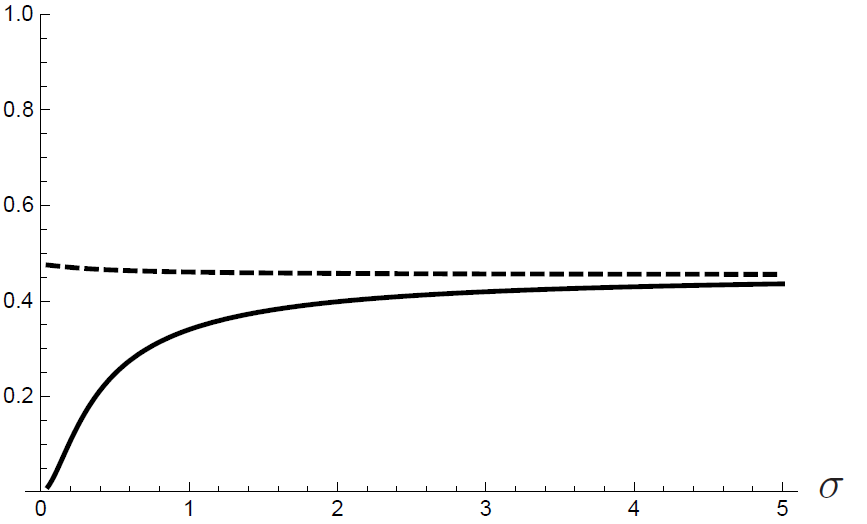}\label{fig:ab.sigma}}
  \subfigure[]{\includegraphics[width=0.45\textwidth]{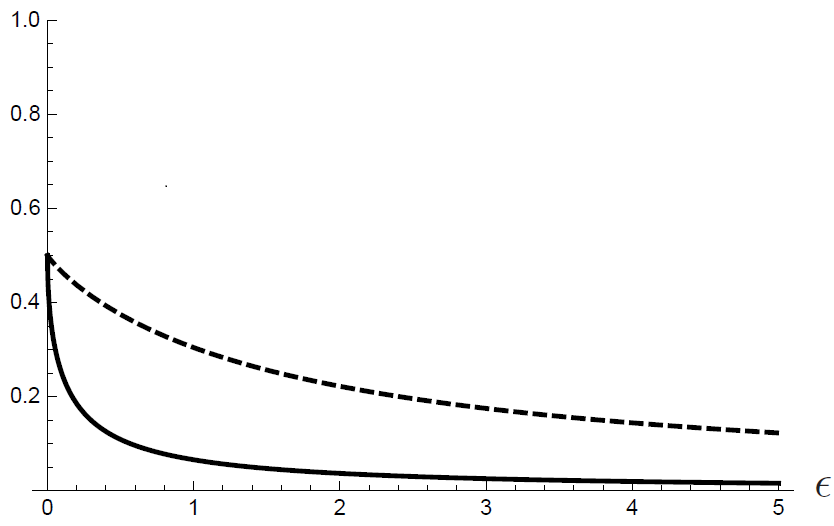}\label{fig:ab.epsilon}}
  \caption{The optimal boundaries ($a$ = solid line and $b$ = dashed line) for the base-case parameters ($\mu_0=-1$, $\mu_1=1$, $\sigma=0.5$ and $\epsilon=0.1$) as we vary $\mu_0$, $\mu_1$, $\sigma$ and $\ep$, respectively.} \label{fig:ab}
\end{figure}

We first consider the effect of changing $\mu_1$ on the equilibrium outcome. As $\mu_1$ increases (all else being equal), the good scenario for Player~1 gets better, both due to a larger drift, and also due to an increased signal-to-noise ratio which speeds up the learning process.
This indicates that the threshold $a$ should be decreasing in the drift $\mu_1$, which is also confirmed numerically, see Figure~\ref{fig:ab.mu1}.
Likewise, if $\mu=\mu_1$ and $\mu_1$ is large, then continuing is costly for Player~2, and at the same time, the advantage of having additional information about the drift is smaller (because of the increased signal-to-noise ratio). Consequently, the threshold $b$ should be decreasing in $\mu_1$, which is also confirmed numerically.

When considering a change in $\mu_0$, there are two competing effects on both players.
On one hand, a decreasing $\mu_0$ is bad for Player~1 (the sup-player), and hence has an increasing effect on the threshold $a$.
On the other hand, a decreasing $\mu_0$ increases the signal-to-noise ratio, which speeds up the learning process, and hence decreases $a$.
Figure~\ref{fig:ab.mu0} confirms the suspicion that there is no monotone dependence of $a$ on $\mu_0$.
For the same reasons as above, the effect of a change in $\mu_0$ on the upper threshold $b$ is ambiguous.
However, this potential ambiguity is not visible in Figure~\ref{fig:ab.mu0} for our base-case parameters.

From Figure~\ref{fig:ab.sigma} we see that as $\sigma$ increases the optimal threshold is increasing for Player~1 and
decreasing for Player~2.
The intuition behind this is that, as $\sigma$ increases, the signal-to-noise ratio decreases, resulting in slower learning and
hence a smaller value function for Player~1 and hence an increased $a$.
For Player 2, however, while an increased $\sigma$ means that they are better able to hide their information from Player~1 (an incentive to increase $b$), the reduced variance of the $\Pi$-process also means that first hitting time of a given threshold is larger for an increased $\sigma$.
Since $\mu_1>0$, a longer expected time to stop would ultimately result in an increased expected cost for Player 2 (an incentive to decrease $b$).
By the numerics, the net result for our base-case parameters is that Player~2 reduces their threshold $b$ as $\sigma$ increases.

Lastly, we consider the effect of a change in $\epsilon$.
Since the value of $\epsilon$ does not impact the ability of Player~1 to learn about the drift, its effect on the equilibrium can only be through the payoff structure of the game.
Therefore, all value functions clearly increase in $\ep$; for Player~1 this means that the continuation region is increasing in $\ep$, so that the threshold $a$ is decreasing.
However, no easy monotonicity for $b$ can be deduced as there is no obvious effect on the continuation region for Player~2 (since also the obstacle depends on $\ep$).
From Figure~\ref{fig:ab.epsilon} we observe the anticipated monotonic dependence of $a$ on $\ep$ and, for our base-case parameters at least, $b$ is also seen to be monotonic decreasing in $\ep$.

Finally, to calculate the {\em value of information} for the game,
we end the article with an informal discussion on the case with symmetric and incomplete information. Assume that both players have the same initial prior distribution for $\mu$, that is they agree on $\pi$ as the initial probability that the drift is $\mu_1$, and $1-\pi$ as the probability that the drift is $\mu_0$. Then randomisation is not needed for either player and a saddle point in stopping times $(\tau_1,\tau_2)$ can be obtained. In fact, the game with linear payoffs reduces to
\begin{eqnarray*}
  U(x,\varphi)&=& \frac{x}{1+\varphi} \sup_{\tau_1}\inf_{\tau_2}\EE^0_\varphi\left[e^{\mu_0\tau_1}(1+\Phi_{\tau_1})1_{\{\tau_1<\tau_2\}}\right.\\
&& \hspace{30mm} \left.+(1+\ep)e^{\mu_0\tau_2}(1+\Phi_{\tau_2})1_{\{\tau_2\leq\tau_1\}}\right]
\end{eqnarray*}
where
\[d\Phi_t=\sigma\omega\Phi_t\,dt + \omega\Phi_t\,d\W_t^0\]
under $\PP^0$. It is then straightforward to check that one can find $A,B\in(0,\infty)$ with $A<B$ and a function $\widehat{V}$ with $1+\varphi\leq\widehat{V}\leq(1+\ep)(1+\varphi)$ such that
\begin{eqnarray*}
\left\{\begin{array}{ll}
\frac{\omega^2\varphi^2}{2}\widehat{V}''(\varphi) + \sigma\omega\varphi\widehat{V}'(\varphi) + \mu_0\widehat{V}(\varphi)=0, &  \textrm{for } \varphi\in (A,B)\\
\widehat{V}(\varphi)=1+A, &  \textrm{for } \varphi\in(0,A]\\
\widehat{V}'(A+)=1, &\\
\widehat{V}(\varphi)=(1+\ep)(1+B), &  \textrm{for } \varphi\in[B,\infty)\\
\widehat{V}'(B-)=1+\ep. &
\end{array}\right.
\end{eqnarray*}
Using standard verification arguments, $(\tau^*_1,\tau_2^*):=(\tau_A,\tau_B)$ is a saddle point of stopping times, and the corresponding value function is given by $U(x,\varphi)=x\widehat{V}(\varphi)/(1+\varphi)$.

\begin{figure}[htp!]\centering
  \psfrag{V}{$U/x$}
  \psfrag{Pi}{$\pi$}
  \psfrag{VoI}{VoI}
  \includegraphics[width=0.45\textwidth]{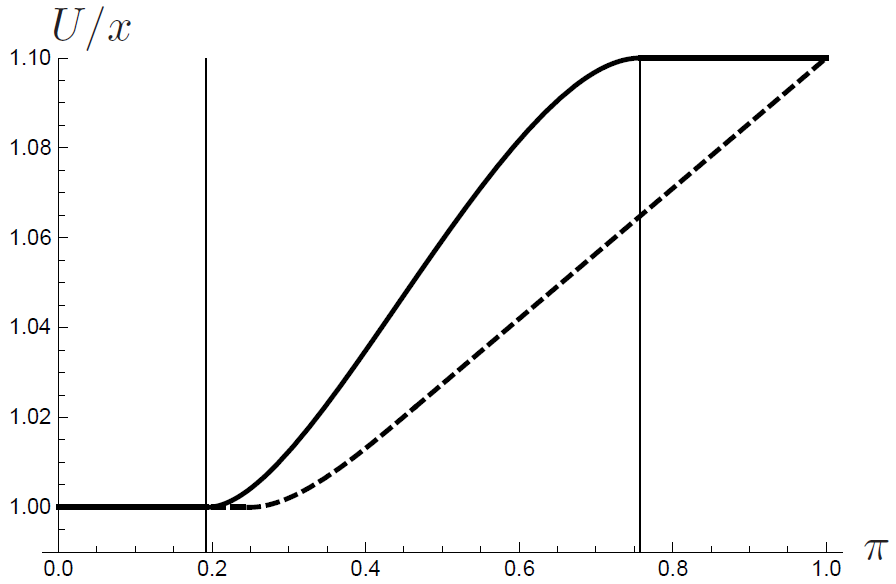}\hspace{2mm}
  \includegraphics[width=0.45\textwidth]{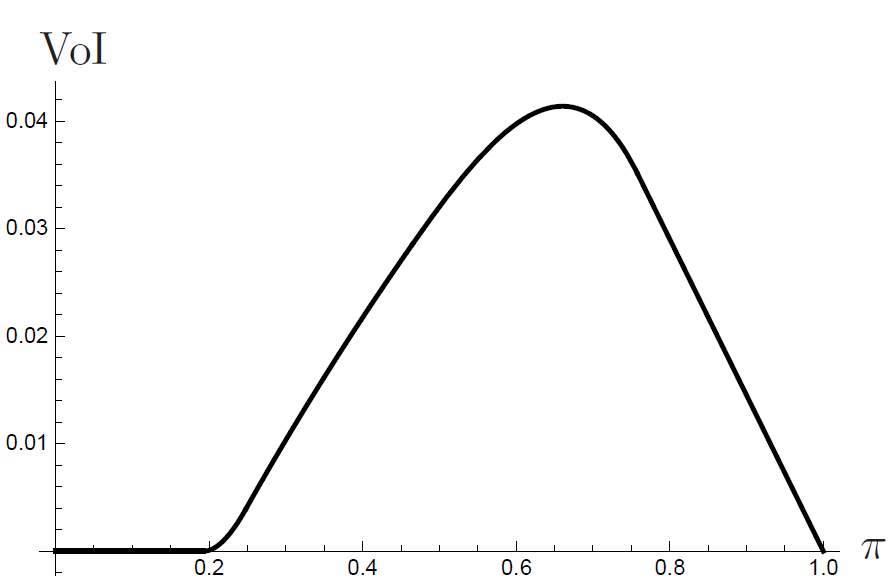}
  \caption{On the left: The common value function for both players in the symmetric incomplete information case (solid line) in comparison to the value function in the asymmetric case (dashed line). The two vertical lines correspond to the values $a:=A/(1+A)$ and $b:=B/(1+B)$ (for the symmetric case). On the right: The difference between these values, which represents the value of information in our game. Base-case parameters: $\mu_0=-1$, $\mu_1=1$, $\sigma=0.5$ and $\epsilon=0.1$, which yields $a=0.193$ and $b=0.758$ (for the symmetric case).} \label{fig:value-sym}
\end{figure}

Figure~\ref{fig:value-sym} plots the value function $U(1,\pi)=U(x,\pi)/x$ for our base-case parameters, along with the value function of the uninformed player for the asymmetric case for comparison.
The difference between the asymmetric value function and the symmetric one is also plotted and can be interpreted as the value of information in this setting.

\end{document}